	\titlespacing{\section}{0pt}{12pt}{0pt}
	\titlespacing{\subsection}{0pt}{6pt}{0pt}
	\definecolor{linkred}{rgb}{0.6,0,0}
	\definecolor{linkblue}{rgb}{0,0,0.6}
\theoremstyle{plain}
	\newtheorem{theorem}{Theorem}
	\newtheorem{proposition}[theorem]{Proposition}
	\newtheorem{corollary}[theorem]{Corollary}
	\newtheorem{conjecture}[theorem]{Conjecture}
	\newtheorem{lemma}[theorem]{Lemma}
\theoremstyle{definition}
	\newtheorem{definition}[theorem]{Definition}
	\newtheorem{remark}[theorem]{Remark}
\newcommand\blfootnote[1]{
	\begingroup
	\renewcommand\thefootnote{}\footnote{#1}
	\addtocounter{footnote}{-1}
	\endgroup
}
\newcommand {\dd}{\mathrm{d}}
\newcommand {\h}{\hbar}
\newcommand{\llambda}{\boldsymbol{\lambda}}
\newcommand{\mmu}{\boldsymbol{\mu}}
\newcommand {\xx}{\mathbf{x}}
\newcommand {\zz}{\mathbf{z}}
\begin{document}

\enlargethispage{2.5pt}

{\large \bfseries Towards the topological recursion for double Hurwitz numbers}

{\bfseries Norman Do and Maksim Karev}

Single Hurwitz numbers enumerate branched covers of the Riemann sphere with specified genus, prescribed ramification over infinity, and simple branching elsewhere. They exhibit a remarkably rich structure. In particular, they arise as intersection numbers on moduli spaces of curves and are governed by the topological recursion of Chekhov, Eynard and Orantin. Double Hurwitz numbers are defined analogously, but with prescribed ramification over both zero and infinity. Goulden, Jackson and Vakil have conjectured that double Hurwitz numbers also arise as intersection numbers on moduli spaces.

In this paper, we repackage double Hurwitz numbers as enumerations of branched covers weighted by certain monomials and conjecture that they are governed by the topological recursion. Evidence is provided in the form of the associated quantum curve and low genus calculations. We furthermore reduce the conjecture to a weaker one, concerning a certain polynomial structure of double Hurwitz numbers. Via the topological recursion framework, our main conjecture should lead to a direct connection to enumerative geometry, thus shedding light on the aforementioned conjecture of Goulden, Jackson and Vakil.
\blfootnote{\emph{2010 Mathematics Subject Classification:} 14N10; 05A15; 32G15; 14N35. \\
\emph{Date:} \today \\ 
The first author was supported by the Australian Research Council grant DE130100650.} 

~

\hrule

\setlength{\parskip}{0pt}
\tableofcontents
\setlength{\parskip}{4pt}

\section{Introduction} \label{sec:introduction}

\subsection{Double Hurwitz numbers}

Single Hurwitz numbers --- also known as simple Hurwitz numbers --- enumerate branched covers of the Riemann sphere with specified genus, prescribed ramification over infinity, and simple branching elsewhere. Although first studied by Hurwitz towards the end of the nineteenth century~\cite{hur}, the extent of their remarkably rich structure only became apparent towards the end of the twentieth. The revival of interest in Hurwitz numbers was in part sparked by the empirical observation of Goulden, Jackson and Vainshtein that they possess a certain polynomial structure~\cite{gou-jac-vai}. This was later proved by Ekedahl, Lando, Shapiro and Vainshtein, who showed that single Hurwitz numbers are equal to Hodge integrals over the Deligne--Mumford compactification of the moduli space of curves~\cite{eke-lan-sha-vai}. Their so-called ELSV formula not only makes the polynomial structure of Hurwitz numbers apparent, but connects them to the realms of enumerative geometry and mathematical physics. More recently, work motivated by topological string theory led Bouchard and Mari\~{n}o to conjecture that single Hurwitz numbers are governed by the topological recursion of Chekhov, Eynard and Orantin~\cite{bou-mar}. This was subsequently proven~\cite{eyn-mul-saf} and it has furthermore been demonstrated that the ELSV formula and the Bouchard--Mari\~{n}o conjecture are in some sense equivalent~\cite{dun-kaz-ora-sha-spi}.

Double Hurwitz numbers enumerate branched covers of the Riemann sphere with specified genus, prescribed ramification over both zero and infinity, and simple branching elsewhere. Okounkov showed that they arise naturally as coefficients of a certain tau-function of the Toda integrable hierarchy~\cite{oko}. Goulden, Jackson and Vakil demonstrated that double Hurwitz numbers possess a certain piecewise polynomial structure. Moreover, they presented evidence to suggest that double Hurwitz numbers are equal to integrals over moduli spaces of curves equipped with a line bundle~\cite{gou-jac-vak}. However, to this date, a rigorous definition of these moduli spaces and their intersection theory is yet to be determined.

In this paper, we interpret double Hurwitz numbers as enumerations of branched covers weighted by monomials in the following way.

\begin{definition} \label{def:dhurwitz}
Fix a positive integer $d$ and weights $s, q_1, q_2, \ldots, q_d \in \mathbb{C}$. Define the \emph{double Hurwitz number} $DH_{g,n}(\mu_1, \ldots, \mu_n) $ to be the weighted count of connected genus $g$ branched covers of the Riemann sphere $f: (\Sigma; p_1, \ldots, p_n) \to (\mathbb{CP}^1; \infty)$ such that
\begin{itemize}
\item all branching away from 0 and $\infty$ is simple and occurs at some number $m$ of fixed points;
\item $f^{-1}(\infty) = \mu_1 p_1 + \cdots + \mu_n p_n$; and
\item no preimage of 0 has ramification index larger than $d$.
\end{itemize}
If such a branched cover has ramification profile $(\lambda_1, \lambda_2, \ldots, \lambda_\ell)$ over 0, then we assign it the weight
\[
\frac{q_{\lambda_1} q_{\lambda_2} \cdots q_{\lambda_{\ell}}}{|\mathrm{Aut}~ f|} \frac{s^m}{m!}.
\]
Here, the automorphism group $\mathrm{Aut}~ f$ consists of Riemann surface automorphisms $\phi: \Sigma \to \Sigma$ that preserve the marked points $p_1, \ldots, p_n$ and satisfy $f \circ \phi = f$.
\end{definition}

\begin{remark}
We make note of the following initial remarks concerning Definition~\ref{def:dhurwitz}.
\begin{itemize}
\item The Riemann--Hurwitz formula asserts that the number of simple branch points must satisfy $m = 2g - 2 + n + \ell$, where $\ell$ is the number of preimages of 0.

\item Each double Hurwitz number is a weighted homogeneous polynomial in $q_1, q_2, \ldots, q_d$ with positive rational coefficients.

\item The double Hurwitz numbers $H^g_{\llambda, \mmu}$ defined by Goulden, Jackson and Vakil~\cite{gou-jac-vak} may be recovered as a combinatorial factor multiplied by the coefficient of $q_{\lambda_1} q_{\lambda_2} \cdots q_{\lambda_\ell}$ in the polynomial $DH_{g,n}(\mu_1, \ldots, \mu_n)$, as long as we fix $d \geq \max(\lambda_1, \lambda_2, \ldots, \lambda_\ell)$.

\item The parameter $s$ is redundant in the sense that its exponent in each monomial of a double Hurwitz number can be recovered via the Riemann--Hurwitz formula. However, the advantages of retaining it should become apparent in the following.

\item The single Hurwitz numbers are recovered by taking $q_1 = 1$ and $q_i = 0$ for $i \neq 1$. More generally, one recovers $a$-orbifold Hurwitz numbers by taking $q_a = 1$ and $q_i = 0$ for $i \neq a$~\cite{do-lei-nor, bou-her-liu-mul}.
\end{itemize}
\end{remark}

See Appendix~\ref{sec:data} for a table of double Hurwitz numbers.

\subsection{The main conjecture}

The particular way in which we have assembled the double Hurwitz numbers allows us to consider potential analogues of results pertaining to single Hurwitz numbers. As an example, we propose a vast generalisation of the Bouchard--Mari\~{n}o conjecture, namely that the double Hurwitz numbers defined above are governed by the topological recursion.

The topological recursion of Chekhov, Eynard and Orantin arose from the theory of matrix models and has subsequently found widespread applications to various areas of mathematics and physics~\cite{che-eyn,eyn-ora07a}. Beyond the realm of matrix models, it is now either known or conjectured to govern the following problems: the enumeration of ribbon graphs and hypermaps~\cite{dum-mul-saf-sor, do-man, dun-ora-pop-sha}; Hurwitz numbers of various flavours~\cite{bou-mar, eyn-mul-saf, do-lei-nor, bou-her-liu-mul, do-dye-mat, do-kar}; Gromov--Witten invariants of $\mathbb{CP}^1$~\cite{nor-sco, dun-ora-sha-spi}; Gromov--Witten invariants of toric Calabi--Yau threefolds~\cite{bou-kle-mar-pas, eyn-ora12, fan-liu-zon}; and asymptotics of coloured Jones polynomials of knots~\cite{dij-fuj-man, bor-eyn}.

Let us first state our main conjecture relating double Hurwitz numbers to the topological recursion before presenting a rigorous treatment of the topological recursion itself.

\begin{conjecture} \label{con:main}
Let $P(z) = q_1z + q_2z^2 + \cdots + q_d z^d$. Topological recursion applied to the rational spectral curve
\begin{equation} \label{eq:scurve}
x(z) = z \exp (-sP(z)) \qquad \text{and} \qquad y(z) = P(z)
\end{equation}
produces correlation differentials whose expansions at $x_i = 0$ satisfy
\[
\omega_{g,n} = \sum_{\mu_1, \ldots, \mu_n=1}^\infty DH_{g,n}(\mu_1, \ldots, \mu_n)  \prod_{i=1}^n \mu_i x_i^{\mu_i-1} \dd x_i, \qquad \text{for } (g,n) \neq (0,2).
\]
\end{conjecture}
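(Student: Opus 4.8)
The plan is to mimic the strategy that established the Bouchard--Mari\~{n}o conjecture for single and orbifold Hurwitz numbers, suitably adapted to the several ramification points of the curve~\eqref{eq:scurve}. I would first assemble the generating series $W_{g,n}(x_1, \ldots, x_n) = \sum DH_{g,n}(\mu_1, \ldots, \mu_n) \prod_i x_i^{\mu_i}$, so that the correlation differentials sought in the conjecture are $\omega_{g,n} = \dd_1 \cdots \dd_n\, W_{g,n}$, with $\dd_i$ the exterior derivative in $x_i$. The driving mechanism is a cut-and-join recursion: peeling off one of the $m$ simple branch points relates the count at level $m$ to counts at level $m-1$, and since $m = 2g-2+n+\ell$ is pinned by Riemann--Hurwitz, differentiation in $s$ realises the cut-and-join operator. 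Working in Okounkov's semi-infinite wedge formalism, I would write the $n$-point functions as vacuum expectation values and extract from them either a recursion of cut-and-join type for the $W_{g,n}$ or, via the fermionic two-point function, explicit formulae; the weighting $\prod q_{\lambda_j}$ of the profile over $0$ enters as the data that will later become the $y$-coordinate.

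Next I would pin down the spectral curve data. The key computation is that the genus-zero one-point numbers $DH_{0,1}(\mu)$ assemble, via Lagrange inversion, into precisely the functional relation $x = z\exp(-sP(z))$, identifying the disk amplitude $\omega_{0,1}$ with $y\,\dd x = P(z)\,\dd\!\left(z e^{-sP(z)}\right)$; the appearance of $y = P(z)$ is exactly where the $q$-weighting of the profile over $0$ manifests, and specialising $P(z) = q_a z^a$ recovers the known orbifold curve. The genus-zero two-point numbers should reproduce the Bergman kernel $\omega_{0,2} = \frac{\dd z_1\, \dd z_2}{(z_1 - z_2)^2}$, which differs from the naive $(0,2)$ generating series by the term $\frac{\dd x_1\, \dd x_2}{(x_1-x_2)^2}$ --- the reason the conjecture excludes $(g,n) = (0,2)$. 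The ramification points, at which the recursion takes residues, are the $d$ zeros of $\dd x$, namely the solutions of $s\, z P'(z) = 1$.

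The central step is to transport the cut-and-join recursion into the Eynard--Orantin residue formula on this curve. I would do this by verifying that the $\omega_{g,n}$ satisfy the linear and quadratic loop equations relative to the local deck involutions at each of the $d$ ramification points, and that they extend to global meromorphic differentials on the rational curve with poles confined to those points. The uniqueness theorem for solutions of the loop equations with prescribed pole structure then forces the $\omega_{g,n}$ to equal the topological recursion output for~\eqref{eq:scurve}. The change of variables $x = z e^{-sP(z)}$ is precisely what converts the combinatorial, $\mu$-indexed recursion into a residue statement at the branch points.

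The hard part will be exactly this passage from a global combinatorial recursion to a local residue formula, and it separates into two genuine difficulties. The first is analytic: each $\omega_{g,n}$ is a priori only a power series at $x_i = 0$, and one must prove it extends to a global meromorphic differential with poles only at the $d$ ramification points. This is the polynomiality phenomenon to which the paper reduces its conjecture, it is the natural analogue of the ELSV polynomiality underlying the single Hurwitz case, and controlling it is where I expect the real work to lie. The second difficulty is the multiplicity of branch points: for $d > 1$ the recursion couples all $d$ ramification points at once, so the clean single-point manipulations available in the orbifold case no longer suffice, and one must track the interaction of the local expansions. The quantum curve computed in the paper --- the $\hbar$-deformation of~\eqref{eq:scurve} annihilating the wave function built from the $\omega_{g,n}$ --- furnishes a strong consistency check, but it verifies only the principal specialisation and does not on its own close the argument.
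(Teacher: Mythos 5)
You should first note that the statement you are asked to prove is, in the paper, a \emph{conjecture}: the authors do not prove it, and your proposal does not either. Your roadmap coincides almost exactly with what the paper actually carries out. The cut-and-join recursion (Proposition~\ref{prop:cutjoin} and Corollary~\ref{cor:cutjoin2}), the identification of the spectral curve from $F_{0,1}$ via Proposition~\ref{prop:F01}, the computation of $F_{0,2}$ showing that $\Omega_{0,2}$ differs from the Bergman kernel by $\dd x_1\,\dd x_2/(x_1-x_2)^2$, and the passage from the combinatorial recursion to the residue formula at the branch points $szP'(z)=1$ are all present in Sections~\ref{sec:combinatorics}--\ref{sec:proof}. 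The one genuine difference of route is at the final step: you propose to verify both the linear and quadratic loop equations and then invoke the uniqueness theorem for solutions with prescribed pole structure, whereas the paper's Theorem~\ref{thm:main} shows that the \emph{linear} loop equations alone (together with rationality and the pole location) suffice --- the cut-and-join recursion is manipulated directly into the Eynard--Orantin residue formula by taking principal parts and applying the symmetrisation $\mathrm{Sym}_i$, with Lemmas~\ref{lem:w02} and~\ref{lem:omega} doing the bookkeeping. The paper's route is marginally more economical in its hypotheses, which matters because every unproven hypothesis enlarges the gap.

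The gap itself you have correctly located but not closed: one must prove that for $2g-2+n>0$ the free energy $F_{g,n}$, a priori only a formal power series in the $x_i$, is a rational function of the $z_i$ with poles confined to the branch points and satisfying the anti-invariance $F_{g,n}(z_1,\ldots)+F_{g,n}(\sigma_i(z_1),\ldots)$ analytic at $z_1=a_i$. This is precisely Conjecture~\ref{con:poly}, equivalently the polynomial-like structure of Conjecture~\ref{con:elsv}, and it is open: the ELSV-type input that delivers it for single and orbifold Hurwitz numbers (or the infinite wedge analysis of Dunin-Barkowski--Kazarian--Orantin--Shadrin--Spitz) has no known analogue here, the conjectural geometric source being the unconstructed moduli spaces of Goulden--Jackson--Vakil. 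Your appeal to the semi-infinite wedge formalism is the natural place to look, but as written it is a hope rather than an argument. So the proposal establishes at best the implication ``linear loop equations $\Rightarrow$ topological recursion,'' which is the content of Theorem~\ref{thm:main}, and the statement remains a conjecture.
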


In general, the topological recursion takes as input the data of a spectral curve and outputs correlation differentials $\omega_{g,n}$ for integers $g \geq 0$ and $n \geq 1$. We now describe explicitly the topological recursion in the context of Conjecture~\ref{con:main}; for an exposition of the topological recursion in greater generality, the reader may consult the literature~\cite{eyn-ora07a}.

First, let $a_1, a_2, \ldots, a_d$ be the branch points of the spectral curve --- in other words, the zeroes of
\[
\dd x(z) = 0 \qquad \Leftrightarrow \qquad s z P'(z) - 1 = 0.
\]
For the following discussion, we will require the mild assumption that these zeroes are distinct and hence simple.\footnote{One can also state the conjecture in the case of higher order zeroes by invoking the global topological recursion of Bouchard and Eynard~\cite{bou-eyn}. We consider only the generic case in order to streamline the presentation. Indeed, Bouchard and Eynard demonstrate that non-simple spectral curves and their correlation differentials can be obtained in the limit of simple spectral curves.} It follows that there exists a local involution $\sigma_i: U_i \to U_i$ defined on a small open neighbourhood of the branch point $z = a_i$ on the spectral curve, such that $\sigma_i$ is meromorphic and satisfies the equation $x(\sigma_i(z)) = x(z)$, but is not the identity on $U_i$.

Define the base cases\footnote{In the original formulation of the topological recursion, one usually defines $\omega_{0,1}(z_1) = - y(z_1) \, \dd x(z_1)$. The modification here assumes the opposing sign convention and uses $y(z_1) \, \dd \log x(z_1)$ instead, which applies in various settings, such as the Bouchard--Mari\~{n}o conjecture concerning single Hurwitz numbers. In such cases, it is common in the literature to refer to $x$ as a $\mathbb{C}^*$-coordinate, rather than a $\mathbb{C}$-coordinate. Thus, equation~\eqref{eq:scurve} describes what is known as a $\mathbb{C}^* \times \mathbb{C}$ spectral curve.}
\begin{equation} \label{eq:trbases}
\omega_{0,1}(z_1) = \frac{y(z_1) \, \dd x(z_1)}{x(z_1)} \qquad \text{and} \qquad \omega_{0,2}(z_1, z_2) = \frac{\dd z_1 \otimes \dd z_2}{(z_1-z_2)^2}.
\end{equation}

Next, define the recursion kernel
\begin{equation} \label{eq:kernel}
K_i(z_1, z) = \frac{\int_o^z \omega_{0,2}(z_1, \,\cdot\,)}{\omega_{0,1}(z) - \omega_{0,1}(\sigma_i(z))} = \frac{\dd z_1}{z_1 - z} \frac{1}{\omega_{0,1}(z) - \omega_{0,1}(\sigma_i(z))},
\end{equation}
which exists only on the small open neighbourhood $U_i$ of the branch point $z = a_i$. The topological recursion is not sensitive to the choice of basepoint $o$ on the spectral curve, so we have taken it to be $z = \infty$ for convenience.

For $2g-2+n > 0$, let
\[
\omega_{g,n}(z_1, \ldots, z_n) = \sum_{i=1}^d \mathop{\text{Res}}_{z=a_i} K_i(z_1,z) \Bigg[ \omega_{g-1,n+1}(z, \sigma_i(z), \zz_S) + \mathop{\sum_{g_1+g_2=g}}_{I \sqcup J = S \setminus \{1\}}^\circ \omega_{g_1,|I|+1}(z, \zz_I) \, \omega_{g_2,|J|+1}(\sigma_i(z), \zz_J) \Bigg].
\]
Here, we let $S = \{1, 2, \ldots, n\}$ and let $\zz_I = \{z_{i_1}, z_{i_2}, \ldots, z_{i_k}\}$ for $I = \{i_1, i_2, \ldots, i_k\}$. The symbol $\circ$ over the inner summation means that we exclude terms involving $\omega_{0,1}$.

The correlation differential $\omega_{g,n}$ is a multidifferential on the spectral curve $\mathcal{C}$ rather than a differential form. More precisely, $\omega_{g,n}$ is a meromorphic section of the line bundle $\pi_1^*(T^*\mathcal{C}) \otimes \pi_2^*(T^*\mathcal{C}) \otimes \cdots \otimes \pi_n^*(T^*\mathcal{C})$, on the Cartesian product $\mathcal{C}^n$, where $\pi_i: \mathcal{C}^n \to \mathcal{C}$ denotes projection onto the $i$th factor. For notational convenience, we will subsequently drop the $\otimes$ symbol when writing multidifferentials. Despite the fact that the topological recursion is asymmetric in nature, the resulting correlation differentials are indeed symmetric. Furthermore, for $2g-2+n>0$, it is known that the correlation differential $\omega_{g,n}(z_1, \ldots, z_n)$ has poles only at the branch points $a_1, a_2, \ldots, a_d$ of the spectral scurve.

Finally, to interpret the statement of Conjecture~\ref{con:main}, one is required to expand the correlation differentials $\omega_{g,n}(z_1, \ldots, z_n)$ at $x_i = 0$. We do this by setting $x_i = x(z_i)$. This notation and the analogous notation $y_i = y(z_i)$ will be used throughout the paper.

\subsection{Evidence for the conjecture}

There is substantial evidence to support our main conjecture. The cut-and-join recursion, stated below as Proposition~\ref{prop:cutjoin}, allows one to compute double Hurwitz numbers recursively. It is natural to present this at the level of generating functions $F_{g,n}$ known as free energies, which store all double Hurwitz numbers of the form $DH_{g,n}(\mu_1, \ldots, \mu_n)$. In particular, we obtain a relation between the free energies, stated below as Corollary~\ref{cor:cutjoin2}, that superficially resembles the topological recursion. In particular, it expresses $F_{g,n}$ in terms of $F_{g-1,n+1}$ and products $F_{g_1,n_1} \times F_{g_2, n_2}$ that do not involve $F_{0,1}$, where $g_1+g_2=g$ and $n_1+n_2=n+1$.

In many instances of the topological recursion on a spectral curve, there is an associated quantum curve that underlies it. In short, a quantum curve is a differential operator that annihilates a wave function constructed from the correlation differentials of the topological recursion. One can consider the semi-classical limit of the quantum curve, which is a plane curve that coincides with the original spectral curve in many cases. The quantum curve for double Hurwitz numbers was previously computed by Alexandrov, Lewanski and Shadrin~\cite{ale-lew-sha} and its semi-classical limit is $y = P(x \exp(sy))$, which recovers the spectral curve of equation~\eqref{eq:scurve}. It has been proposed that the existence of a quantum curve can be used to predict the structure of topological recursion and the associated spectral curve~\cite{nor16}.

Finally, one can attempt to verify the main conjecture via direct computation in low genus. We carry this out in the cases $(g,n) = (0,1)$, $(0,3)$ and $(1,1)$.

\subsection{Ramifications and applications}

Our main conjecture subsumes some existing results and conjectures in the literature, which can be obtained by specialising the weights $s, q_1, q_2, \ldots$. It is inspired by the Bouchard--Mari\~{n}o conjecture, which is recovered by considering the case $P(z) = z$. More generally, one can consider $P(z) = z^a$ for a positive integer $a$ and recover the fact that the topological recursion governs orbifold Hurwitz numbers~\cite{do-lei-nor, bou-her-liu-mul}. Alexandrov, Lewanski and Shadrin propose a 1-parameter deformation of single Hurwitz numbers, via the enumeration of double Hurwitz numbers, weighted by $c$ to the power of the colength of the ramification profile over 0~\cite{ale-lew-sha}. Our main conjecture recovers theirs by specialising the weights to $q_i = i c^{i-1}$ and sending $d$ to infinity or, equivalently, by considering the case $P(z) = \frac{z}{(1-cz)^2}$.

Our strategy of repackaging double Hurwitz numbers as enumerations of branched covers weighted by monomials allows us to generalise results concerning single Hurwitz numbers to their double counterparts. This approach may also be employed in the context of other problems of a similar nature, such as double montone Hurwitz numbers, though we do not pursue that line of reasoning here~\cite{do-dye-mat, do-kar}.

Our main conjecture, stated above as Conjecture~\ref{con:main}, leads to certain previously unidentified structure for double Hurwitz numbers. For instance, it implies that the free energy generating functions for double Hurwitz numbers defined below in equation~\eqref{eq:fenergies} are actually rational, with poles only at the branch points of the spectral curve satisfying a certain symmetry with respect to the local involution. We state this conjecture explicitly below as Conjecture~\ref{con:poly}. At the level of the double Hurwitz numbers themselves, this manifests as a particular polynomial-like structure, which we state explicitly below as Conjecture~\ref{con:elsv}. Perhaps the main consequence of this paper is that these three conjectures are all equivalent.

We foresee that the main application of our conjecture is towards the geometry of double Hurwitz numbers. The general theory of topological recursion has been motivated in part by its connections to enumerative geometry. A rather simple statement is that spectral curves are locally modelled on the Airy curve $x(z) = \frac{1}{2} z^2$ and $y(z) = z$ at their branch points. A deeper statement is that correlation differentials of arbitrary spectral curves are also locally modelled on those of the Airy curve. Furthermore, the Airy correlation differentials are known to store psi-class intersection numbers on the moduli space of curves, the main objects of the celebrated Witten--Kontsevich theorem~\cite{eyn-ora07a}. Eynard managed to push this result further and show that the lower order terms of correlation differentials can also be related to intersection numbers on moduli spaces~\cite{eyn11b}. More recently, Dunin-Barkowski, Orantin, Shadrin and Spitz made a connection between topological recursion and the Givental formalism, proving that correlation differentials store ancestor invariants of a cohomological field theory~\cite{dun-ora-sha-spi}.

From the previous discussion, a consequence of our main conjecture is a direct relation between double Hurwitz numbers and intersection theory on moduli spaces of curves. Such a connection should shed light on the conjecture of Goulden, Jackson and Vakil, which asserts that double Hurwitz numbers arise as intersection numbers on certain moduli spaces of curves equipped with a line bundle~\cite{gou-jac-vak}.

\begin{itemize}
\item In Section~\ref{sec:combinatorics}, we state the cut-and-join recursion for double Hurwitz numbers, which previously appeared in the work of Zhu~\cite{zhu12}. We write these at the level of the generating functions known as free energies. Finally, we discuss the notion of pruned double Hurwitz numbers, which previously appeared in the work of Hahn~\cite{hah}, and their relation to the free energies.

\item In Section~\ref{sec:evidence}, we give evidence to support our main conjecture. First, we present the quantum curve for double Hurwitz numbers, which was first deduced by Alexandrov, Lewanski and Shadrin~\cite{ale-lew-sha}, and show that its semi-classical limit does indeed recover the spectral curve of equation~\eqref{eq:scurve}. We then provide low genus evidence by calculating the free energies $F_{0,3}$ and $F_{1,1}$ and demonstrating that they are consistent with our main conjecture.

\item In Section~\ref{sec:proof}, we outline a possible proof of our main conjecture. In particular, we show that the conjecture can be reduced to proving that the free energies satisfy so-called linear loop equations. We furthermore show that these constraints are equivalent to a polynomial-like structure for double Hurwitz numbers.
\end{itemize}

\section{Combinatorics of double Hurwitz numbers} \label{sec:combinatorics}

\subsection{Cut-and-join recursion} \label{subsec:cutjoin}

A natural way to compute Hurwitz numbers is via the cut-and-join recursion. It was originally formulated by Goulden and Jackson in the case of genus 0 single Hurwitz numbers~\cite{gou-jac}, but has much broader applicability. At the level of branched covers, the cut-and-join recursion arises by examining the behaviour of the ramification profile over infinity as one of the simple branch points approaches infinity. Otherwise, one may interpret Hurwitz numbers as an enumeration of transitive factorisations
\begin{equation} \label{eq:factorisation}
\tau_0 \tau_1 \tau_2 \cdots \tau_m = \rho
\end{equation}
in symmetric groups, by passing to the monodromy representation of a branched cover and appealing to the Riemann existence theorem. Here, $\tau_0$ represents the monodromy over 0, the transpositions $\tau_1, \tau_2, \ldots, \tau_m$ represent the monodromy of the simple branch points, and $\rho$ represents the inverse of the monodromy over $\infty$. The cut-and-join recursion then arises simply by considering the result of multiplying both sides of equation~\eqref{eq:factorisation} on the right by the transposition $\tau_m$.

The cut-and-join recursion for double Hurwitz numbers appears in the work of Zhu~\cite{zhu12}. The statement below paraphrases the result using our particular definition of double Hurwitz numbers.

\begin{proposition}[Cut-and-join recursion] \label{prop:cutjoin}
The double Hurwitz numbers satisfy the equation
\begin{align*}
&\bigg( 2g-2+n + \sum_{i=1}^d q_i \frac{\partial}{\partial q_i} \bigg) DH_{g,n}(\mu_1, \ldots, \mu_n) = s \sum_{i < j} (\mu_i + \mu_j) \, DH_{g,n-1}(\mmu_{S \setminus \{i,j\}}, \mu_i+\mu_j) \\
&\qquad + \frac{s}{2} \sum_{i=1}^n \sum_{\alpha + \beta = \mu_i} \alpha \beta \bigg[ DH_{g-1,n+1}(\alpha, \beta, \mmu_{S \setminus \{i\}}) + \mathop{\sum_{g_1+g_2=g}}_{I \sqcup J = S \setminus \{i\}} DH_{g_1,|I|+1}(\alpha, \mmu_I) \, DH_{g_2, |J|+1}(\beta, \mmu_J) \bigg].
\end{align*}
Here, we let $S = \{1, 2, \ldots, n\}$ and let $\mmu_I = \{\mu_{i_1}, \mu_{i_2}, \ldots, \mu_{i_k}\}$ for $I = \{i_1, i_2, \ldots, i_k\}$. Furthermore, all double Hurwitz numbers can be calculated from this recursion along with the base cases
\[
\left. DH_{0,1}(\mu) \right|_{s=0} = \frac{1}{\mu} q_{\mu} \qquad \text{and} \qquad \left. DH_{g,n}(\mu_1, \ldots, \mu_n) \right|_{s=0} = 0 \quad \text{for } (g,n) \neq (0,1).
\]
\end{proposition}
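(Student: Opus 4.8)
The plan is to pass from branched covers to the monodromy model via the Riemann existence theorem and then obtain the recursion by removing a single simple branch point, exactly as indicated in the discussion preceding the proposition. First I would fix a permutation $\rho \in \mathfrak{S}_N$ of cycle type $(\mu_1, \ldots, \mu_n)$, where $N = \mu_1 + \cdots + \mu_n$, with its cycles $C_1, \ldots, C_n$ labelled so that $C_i$ records the behaviour at the marked point $p_i$. A connected cover as in Definition~\ref{def:dhurwitz} with ramification profile $\lambda$ over $0$ then corresponds to a transitive tuple $(\tau_0, \tau_1, \ldots, \tau_m)$ with $\tau_0$ of cycle type $\lambda$, each $\tau_j$ a transposition, and $\tau_0 \tau_1 \cdots \tau_m = \rho$. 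Marking the cycles of $\rho$ removes the symmetric factors coming from equal parts of $\mmu$, so that the $1/|\mathrm{Aut}~f|$ weighting is accounted for by dividing by $\prod_i \mu_i$, the order of the subgroup of the centraliser of $\rho$ preserving the labelling. This yields
\[
DH_{g,n}(\mu_1, \ldots, \mu_n) = \frac{1}{\prod_{i=1}^n \mu_i} \sum_{\lambda} q_\lambda \, \frac{s^m}{m!} \, \#\big\{ (\tau_0, \ldots, \tau_m) : \tau_0 \text{ has type } \lambda,\ \tau_0 \cdots \tau_m = \rho,\ \text{transitive} \big\},
\]
where $m = 2g-2+n+\ell(\lambda)$ is fixed by Riemann--Hurwitz and $\lambda$ ranges over partitions of $N$ with all parts at most $d$.

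Next I would apply the operator on the left-hand side. Since $\sum_i q_i \partial_{q_i}$ acts on the monomial $q_\lambda$ as multiplication by $\ell(\lambda)$, the combination $2g-2+n+\sum_i q_i\partial_{q_i}$ multiplies the contribution of each cover by $2g-2+n+\ell(\lambda) = m$, turning $s^m/m!$ into $s \cdot s^{m-1}/(m-1)!$ and amounting to a choice of a distinguished transposition, which I take to be $\tau_m$. Writing $\rho' = \rho\tau_m$ so that $\tau_0 \cdots \tau_{m-1} = \rho'$, I would classify $\tau_m = (a\,b)$ according to whether $a$ and $b$ lie in the same cycle of $\rho$ (a \emph{cut}, splitting a cycle of length $\mu_i$ into two of lengths $\alpha$ and $\beta$ with $\alpha+\beta=\mu_i$) or in distinct cycles $C_i, C_j$ (a \emph{join}, merging them into one cycle of length $\mu_i+\mu_j$). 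Because a single transposition joins at most two of the orbits of $\langle \tau_0, \ldots, \tau_{m-1}\rangle$, the reduced cover has at most two components; a short permutation computation shows that in the join case it is necessarily connected, while in the cut case it is either connected or splits into two pieces. Riemann--Hurwitz then pins down the invariants of the reduced cover(s): the join produces $DH_{g,n-1}(\mmu_{S\setminus\{i,j\}}, \mu_i+\mu_j)$; the connected cut produces $DH_{g-1,n+1}(\alpha,\beta,\mmu_{S\setminus\{i\}})$; and the disconnected cut produces the products $DH_{g_1,|I|+1}(\alpha,\mmu_I)\,DH_{g_2,|J|+1}(\beta,\mmu_J)$ with $g_1+g_2=g$ and $I\sqcup J = S\setminus\{i\}$, the $q$-weights factoring as $q_\lambda = q_{\lambda^{(1)}}q_{\lambda^{(2)}}$.

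The hard part will be the bookkeeping that converts transposition counts into the geometric coefficients $\mu_i+\mu_j$ and $\alpha\beta$. Here I would combine three ingredients: the number of transpositions effecting a given move (namely $\mu_i\mu_j$ for the join, and $\mu_i$ or $\mu_i/2$ for a cut into $\{\alpha,\beta\}$ according as $\alpha\neq\beta$ or $\alpha=\beta$), the change in the normalising factor $1/\prod_k\mu_k$ as the cycle lengths change, and the factor $\tfrac12$ relating the unordered splits $\{\alpha,\beta\}$ to the ordered sum over $\alpha+\beta=\mu_i$. For the join, the ratio of normalisations is $(\mu_i+\mu_j)/(\mu_i\mu_j)$, which cancels the $\mu_i\mu_j$ transpositions to leave exactly $(\mu_i+\mu_j)$; for the cut, the ratio $\alpha\beta/\mu_i$ together with the transposition count and the unordered-to-ordered conversion yields $\tfrac{s}{2}\alpha\beta$. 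Finally I would check the base cases by setting $s=0$, which retains only the $m=0$ covers; these are the cyclic covers $z\mapsto z^\mu$, forcing $(g,n)=(0,1)$ with $DH_{0,1}(\mu)|_{s=0}=q_\mu/\mu$ and all other $s=0$ values zero. Since every monomial with $m>0$ is recovered from the recursion by dividing the known right-hand side by $m$, an induction on $m$ shows that the base cases together with the recursion determine all double Hurwitz numbers.
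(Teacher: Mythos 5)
Your proposal is correct, and it follows precisely the route the paper indicates in Subsection~\ref{subsec:cutjoin}: pass to transitive factorisations $\tau_0\tau_1\cdots\tau_m=\rho$ via the Riemann existence theorem, observe that the operator $2g-2+n+\sum q_i\partial_{q_i}$ multiplies each monomial by $m$ so as to distinguish $\tau_m$, and then classify $\tau_m$ as a cut or a join; your normalisation by $1/\prod\mu_i$ for labelled cycles and the resulting coefficients $\mu_i+\mu_j$ and $\tfrac12\alpha\beta$ all check out, as do the $s=0$ base cases and the induction on $m$. Note that the paper itself states this proposition without proof, attributing it to Zhu~\cite{zhu12}, so your argument supplies the details of exactly the mechanism the authors sketch rather than diverging from it.
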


Recall the introduction of the parameter $s$, which records the number of simple branch points, in our definition of the double Hurwitz numbers. One advantage of retaining the parameter is the following slightly more compact form of the cut-and-join recursion.

\begin{corollary} \label{cor:cutjoin}
The double Hurwitz numbers satisfy the equation
\begin{align*}
&\frac{\partial}{\partial s} DH_{g,n}(\mu_1, \ldots, \mu_n) = \sum_{i < j} (\mu_i + \mu_j) \, DH_{g,n-1}(\mmu_{S \setminus \{i,j\}}, \mu_i+\mu_j) \\
& \qquad+ \frac{1}{2} \sum_{i=1}^n \sum_{\alpha + \beta = \mu_i} \alpha \beta \bigg[ DH_{g-1,n+1}(\alpha, \beta, \mmu_{S \setminus \{i\}}) + \mathop{\sum_{g_1+g_2=g}}_{I \sqcup J = S \setminus \{i\}} DH_{g_1,|I|+1}(\alpha, \mmu_I) \, DH_{g_2, |J|+1}(\beta, \mmu_J) \bigg].
\end{align*}
\end{corollary}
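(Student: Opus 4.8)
The plan is to derive the corollary directly from Proposition~\ref{prop:cutjoin} by identifying the differential operator appearing on its left-hand side with $s\,\partial/\partial s$. Once this identification is made, the corollary follows by cancelling a common factor of $s$ from both sides.

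First, I would invoke the structure of double Hurwitz numbers recorded in the remarks following Definition~\ref{def:dhurwitz}. Each $DH_{g,n}(\mu_1, \ldots, \mu_n)$ is a finite sum of monomials, one for each ramification profile $(\lambda_1, \ldots, \lambda_\ell)$ over $0$, of the form
\[
c_{\lambda}\, q_{\lambda_1} q_{\lambda_2} \cdots q_{\lambda_\ell}\, \frac{s^m}{m!},
\]
where the Riemann--Hurwitz formula forces $m = 2g-2+n+\ell$. The essential observation is that the total degree of such a monomial in the variables $q_1, \ldots, q_d$, counted with multiplicity, is exactly $\ell$, the number of parts of the profile. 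Consequently the Euler operator $\sum_{i=1}^d q_i\,\partial/\partial q_i$ acts on this monomial as multiplication by $\ell$, and therefore the full operator $\big(2g-2+n + \sum_i q_i\,\partial/\partial q_i\big)$ acts as multiplication by $2g-2+n+\ell = m$.

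Next I would compare this with the action of $s\,\partial/\partial s$ on the same monomial. Since the monomial carries the factor $s^m$, the operator $s\,\partial/\partial s$ also scales it by $m$. The two operators thus agree on every monomial, hence on each $DH_{g,n}$, giving the identity
\[
\bigg(2g-2+n + \sum_{i=1}^d q_i \frac{\partial}{\partial q_i}\bigg) DH_{g,n}(\mu_1, \ldots, \mu_n) = s\,\frac{\partial}{\partial s} DH_{g,n}(\mu_1, \ldots, \mu_n).
\]
Substituting this into Proposition~\ref{prop:cutjoin}, whose right-hand side is precisely $s$ times the right-hand side of the corollary, yields an equality of the form $s\,\partial_s DH_{g,n} = s\cdot(\cdots)$. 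Cancelling the common factor $s$, which is legitimate because these are polynomials in $s$ and the polynomial ring is an integral domain, produces the stated recursion.

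I expect the argument to be almost entirely bookkeeping, with no serious obstacle. The only point requiring care is the degree count: one must be certain that the $q$-degree of each monomial, weighted by multiplicity, coincides with the number of parts $\ell$ of the ramification profile over $0$, and that this combines correctly with $2g-2+n$ via Riemann--Hurwitz to recover the exponent $m$ of $s$. This is exactly the redundancy of the parameter $s$ noted in the remarks, which is what makes the operator identity hold term by term.
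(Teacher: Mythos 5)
Your proposal is correct and matches the paper's (implicit) argument: the paper states this corollary without proof, relying on exactly the observation that each monomial of $DH_{g,n}$ carries $s^m$ with $m=2g-2+n+\ell$ by Riemann--Hurwitz, so that $\big(2g-2+n+\sum_i q_i\,\partial/\partial q_i\big)$ and $s\,\partial/\partial s$ agree on $DH_{g,n}$, after which one cancels the common factor of $s$. Nothing is missing.
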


\begin{remark}
Observe that the mechanism behind the cut-and-join recursion is in some sense local. In the interpretation of Hurwitz numbers as enumerations of transitive factorisations, the recursion is not sensitive to the permutation $\tau_0$ in equation~\eqref{eq:factorisation}. So using the parameter $s$ allows us to express the cut-and-join recursion identically in the case of simple Hurwitz numbers~\cite{gou-jac}, orbifold Hurwitz numbers~\cite{do-lei-nor, bou-her-liu-mul}, as well as double Hurwitz numbers~\cite{zhu12}. In fact, we remark that the local nature of the cut-and-join recursion implies that it is not even sensitive to the topology of the base curve. Thus, it can also be expressed in the same way for Hurwitz numbers on base curves of higher genus, though with a different set of base cases~\cite{liu-mul-sor}.
\end{remark}

\subsection{Generating functions} \label{subsec:gfunctions}

It is natural to define the following generating functions for the double Hurwitz numbers. In the context of the topological recursion, such generating functions are commonly referred to as \emph{free energies}.
\begin{equation} \label{eq:fenergies}
F_{g,n}(x_1, \ldots, x_n) = \sum_{\mu_1, \ldots, \mu_n = 1}^\infty DH_{g,n}(\mu_1, \ldots, \mu_n) \prod_{i=1}^n x_i^{\mu_i}
\end{equation}
We also define the following formal multidifferentials, where $\dd_i$ denotes the exterior derivative with respect to the $i$th slot.
\begin{align*}
\Omega_{g,n}(x_1, \ldots, x_n) &= \dd_1 \cdots \dd_n F_{g,n}(x_1, \ldots, x_n) \\
&=\sum_{\mu_1, \ldots, \mu_n = 1}^\infty DH_{g,n}(\mu_1, \ldots, \mu_n) \prod_{i=1}^n \mu_i x_i^{\mu_i-1} \dd x_i
\end{align*}
Note that our main conjecture implies that when $2g-2+n>0$, these are in fact expansions of rational multidifferentials on the spectral curve of equation~\eqref{eq:scurve} and coincide with the correlation differentials generated by the topological recursion applied to the spectral curve.

\begin{proposition}\label{prop:cutandjoin1}
The free energies satisfy the equation
\begin{align*} 
\frac{\partial}{\partial s} F_{g,n}(x_1, \ldots, x_n) &= \sum_{i < j} \frac{x_ix_j}{x_i-x_j} \bigg[ \frac{\partial}{\partial x_i} F_{g,n-1}(\xx_{S \setminus \{j\}}) - \frac{\partial}{\partial_j} F_{g,n-1}(\xx_{S \setminus \{i\}}) \bigg] \\
&+ \frac{1}{2} \sum_{i=1}^n \bigg[ u_1 u_2 \frac{\partial^2}{\partial u_1 \, \partial u_2} F_{g-1,n+1}(u_1, u_2, \xx_{S \setminus \{i\}}) \bigg]_{\substack{u_1=x_i \\ u_2=x_i}} \\
&+ \frac{1}{2} \sum_{i=1}^n \mathop{\sum_{g_1+g_2=g}}_{I \sqcup J = S \setminus \{i\}} \bigg[ x_i \frac{\partial}{\partial x_i} F_{g_1,|I|+1}(x_i, \xx_I) \bigg] \bigg[ x_i \frac{\partial}{\partial x_i} F_{g_2,|J|+1}(x_i, \xx_J) \bigg].
\end{align*}
Here, we let $S = \{1, 2, \ldots, n\}$ and let $\xx_I = \{x_{i_1}, x_{i_2}, \ldots, x_{i_k}\}$ for $I = \{i_1, i_2, \ldots, i_k\}$. Furthermore, all free energies can be calculated from the initial conditions
\[
\left. F_{0,1}(x_1) \right|_{s=0} = \sum_{i=1}^d \frac{1}{i} q_i x_1^i \qquad \text{and} \qquad \left. F_{g,n}(x_1, \ldots, x_n) \right|_{s=0} = 0 \quad \text{for } (g,n) \neq (0,1).
\]
\end{proposition}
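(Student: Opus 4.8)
The plan is to derive the stated partial differential equation directly from the cut-and-join recursion of Corollary~\ref{cor:cutjoin} by passing to generating functions. Concretely, I would multiply both sides of the recursion for $DH_{g,n}(\mu_1, \ldots, \mu_n)$ by $\prod_{i=1}^n x_i^{\mu_i}$ and sum over all $\mu_1, \ldots, \mu_n \geq 1$. By the definition of the free energies in equation~\eqref{eq:fenergies}, the left-hand side becomes exactly $\frac{\partial}{\partial s} F_{g,n}(x_1, \ldots, x_n)$, since differentiation in $s$ commutes with the summation. It then remains to identify each of the three terms on the right-hand side with the corresponding differential operator acting on free energies.

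For the join term $\sum_{i<j}(\mu_i+\mu_j)\, DH_{g,n-1}(\mmu_{S\setminus\{i,j\}}, \mu_i+\mu_j)$, I would fix a pair $i<j$, set $\nu = \mu_i + \mu_j$, and first carry out the inner summation over the ways of splitting $\nu$. The key elementary identity here is
\[
\sum_{\substack{\alpha+\beta=\nu \\ \alpha, \beta \geq 1}} x_i^\alpha x_j^\beta = \frac{x_i x_j\,(x_i^{\nu-1} - x_j^{\nu-1})}{x_i - x_j},
\]
obtained by summing a finite geometric series. Inserting the factor $\nu$ and summing against $DH_{g,n-1}(\mmu_{S\setminus\{i,j\}}, \nu)$ over the remaining variables converts the two bracketed powers into the $x_i$- and $x_j$-derivatives of $F_{g,n-1}$, with the understanding that the combined variable $\nu$ is generated by $x_i$ in one term and by $x_j$ in the other; this produces precisely $\frac{x_i x_j}{x_i - x_j}\big[\partial_{x_i} F_{g,n-1}(\xx_{S\setminus\{j\}}) - \partial_{x_j} F_{g,n-1}(\xx_{S\setminus\{i\}})\big]$. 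The two cut terms are more direct: because both summation variables $\alpha$ and $\beta$ carry the same weight $x_i$, the factor $\alpha\beta\, x_i^{\alpha+\beta} = (\alpha x_i^\alpha)(\beta x_i^\beta)$ factorises, and each factor $\sum \alpha\, x_i^\alpha(\cdots)$ is recognised as $x_i \partial_{x_i}$ applied to a free energy. For the genus-dropping term this yields the operator $u_1 u_2\, \partial_{u_1}\partial_{u_2} F_{g-1,n+1}$ evaluated at $u_1 = u_2 = x_i$, while for the splitting term it yields the advertised product of $x_i\partial_{x_i}F_{g_1, |I|+1}(x_i, \xx_I)$ and $x_i\partial_{x_i}F_{g_2, |J|+1}(x_i, \xx_J)$.

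Finally, the initial conditions follow by applying equation~\eqref{eq:fenergies} termwise to the base cases of Proposition~\ref{prop:cutjoin}, using that $q_\mu = 0$ for $\mu > d$ to truncate the $F_{0,1}$ sum at degree $d$. To justify that the recursion together with these initial conditions determines all free energies, I would argue by induction on the power of $s$: the cut-and-join equation is first order in $s$, and upon extracting the coefficient of $s^{m-1}$ on both sides, the right-hand side depends only on coefficients of $s^0, \ldots, s^{m-1}$. The one subtlety worth checking carefully is that $F_{g,n}$ itself reappears on the right, paired with $F_{0,1}$ in the splitting sum (the cases $I = S\setminus\{i\},\, J = \emptyset$ and its mirror); since $F_{0,1}$ contributes a nonzero term already at order $s^0$, one must verify that this self-referential contribution involves only the strictly lower coefficient $[s^{m-1}]F_{g,n}$ rather than $[s^m]F_{g,n}$, so that no circularity arises. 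I expect this bookkeeping --- both the geometric summation in the join term and the well-foundedness of the $s$-recursion --- to be the only genuinely delicate points; the remainder is a routine, if careful, translation of the combinatorial recursion into the language of generating functions.
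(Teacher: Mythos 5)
Your proposal is correct and follows essentially the same route as the paper's proof: multiply the cut-and-join recursion of Corollary~\ref{cor:cutjoin} by $\prod x_i^{\mu_i}$, sum over the $\mu_i$, use the finite geometric series identity for the join term and the factorisation $\alpha\beta\,x^{\alpha+\beta} = (\alpha x^\alpha)(\beta x^\beta)$ for the cut terms, and read off the initial conditions from the base cases of Proposition~\ref{prop:cutjoin}. Your additional check that the $s$-power recursion is well-founded despite $F_{g,n}$ reappearing on the right paired with $F_{0,1}$ is a sound (and slightly more careful) treatment of the uniqueness claim, which the paper leaves implicit.
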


\begin{proof}
The strategy is to multiply both sides of the cut-and-join recursion of Corollary~\ref{cor:cutjoin} by $x_1^{\mu_1} \cdots x_n^{\mu_n}$ and to sum over all $\mu_1, \ldots, \mu_n$. The left side of the equation becomes
\begin{align*}
\sum_{\mu_1, \ldots, \mu_n=1}^\infty \frac{\partial}{\partial s} DH_{g,n}(\mu_1, \ldots, \mu_n) \prod_{i=1}^n x_i^{\mu_i} = \frac{\partial}{\partial s} F_{g,n}(x_1, \ldots, x_n).
\end{align*}

To calculate the first line on the right side of the equation, we use
\begin{align*}
\sum_{\mu_i, \mu_j=1}^\infty (\mu_i+\mu_j) \, DH(\mu_i+\mu_j) \, x_i^{\mu_i} x_j^{\mu_j} &= \sum_{\mu=1}^\infty \mu \, DH(\mu) \sum_{\alpha+\beta=\mu} x_i^{\alpha} x_j^{\beta} \\
&= \sum_{\mu=1}^\infty \mu \, DH(\mu) \, x_i x_j \frac{x_i^{\mu-1}-x_j^{\mu-1}}{x_i-x_j} = \frac{x_ix_j}{x_i-x_j} \bigg[ \frac{\partial}{\partial x_i} F(x_i) - \frac{\partial}{\partial x_j} F(x_j) \bigg].
\end{align*}

To calculate the second and third lines on the right side of the equation, we use
\[
\sum_{\mu=1}^\infty \sum_{\alpha+\beta=\mu} \alpha \beta \, DH(\alpha, \beta) \, x^{\mu} = \sum_{\alpha, \beta=1}^\infty \alpha \beta \, DH(\alpha, \beta) \, x^{\alpha + \beta} = \bigg[ u_1 u_2 \frac{\partial^2}{\partial u_1 \partial u_2} \ F(u_1, u_2) \bigg]_{\substack{u_1=x \\ u_2=x}}
\]
Note that we have dropped extraneous subscripts in the previous two equations. Finally, the initial conditions precisely capture the base cases of Proposition~\ref{prop:cutjoin}.
\end{proof}

In this particular form, the cut-and-join recursion can be used to determine the free energies uniquely. For instance, let us apply this to the case $(g,n) = (0,1)$.

It will be useful to consider the change of variables
\begin{equation} \label{eq:variables}
x(z,s') = z \exp \left( -s'P(z) \right) \qquad \text{and} \qquad s(z,s') = s',
\end{equation}
which we will be using throughout the remainder of the paper. Here, we introduce $s'$ in order to specify the system of variables in which we are working, as this distinction turns out to be important later.

\begin{proposition} \label{prop:F01}
The following equation holds.
\[
x \frac{\partial}{\partial x} F_{0,1}(x) = P(z)
\]
\end{proposition}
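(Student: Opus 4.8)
The plan is to determine $F_{0,1}(x)$ from the cut-and-join recursion of Proposition~\ref{prop:cutandjoin1} specialised to $(g,n)=(0,1)$. In that case the first line (the sum over $i<j$) is empty, the second line involves $F_{-1,2}$ and hence vanishes, and the third line involves a product of free energies with $g_1+g_2=0$ and a disjoint union of the empty set $S\setminus\{i\}=\varnothing$, forcing both factors to be $F_{0,1}$ itself. Thus the recursion collapses to the autonomous differential equation
\begin{equation} \label{eq:F01ode}
\frac{\partial}{\partial s} F_{0,1}(x) = \frac{1}{2}\bigg( x\frac{\partial}{\partial x} F_{0,1}(x) \bigg)^2,
\end{equation}
subject to the initial condition $F_{0,1}(x)|_{s=0} = \sum_{i=1}^d \frac{1}{i} q_i x^i$. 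Writing $G(x,s) = x\,\frac{\partial}{\partial x} F_{0,1}(x)$, I would apply the operator $x\,\partial/\partial x$ to \eqref{eq:F01ode} to obtain the inviscid-Burgers-type equation $\partial_s G = G\,(x\,\partial_x G)$, with $G|_{s=0} = P(x)$ since $x\,\frac{\partial}{\partial x}\sum \frac{1}{i}q_i x^i = \sum q_i x^i = P(x)$.

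Next I would solve this first-order quasilinear PDE by the method of characteristics, or equivalently verify the proposed implicit solution directly. The claim to be proved, $G = P(z)$ where $z$ is defined through the change of variables \eqref{eq:variables} by $x = z\exp(-sP(z))$, is most cleanly checked by differentiating that defining relation. Treating $z$ as a function of $x$ and $s$ through $\log x = \log z - sP(z)$, implicit differentiation gives relations for $\partial_s z$ and $x\,\partial_x z$ in terms of $P(z)$, $P'(z)$ and the Jacobian factor $1 - sz P'(z)$. Substituting these into $\partial_s P(z) = P'(z)\,\partial_s z$ and $x\,\partial_x P(z) = P'(z)\,(x\,\partial_x z)$ should show that $G = P(z)$ satisfies both the Burgers equation $\partial_s G = G\,(x\,\partial_x G)$ and the initial condition $G|_{s=0} = P(z)|_{s=0} = P(x)$, the latter because $s=0$ forces $x=z$.

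To finish, I would integrate $G = P(z)$ back to $F_{0,1}$ using uniqueness. Since Proposition~\ref{prop:cutandjoin1} guarantees that the initial conditions determine all free energies uniquely, and since $x\,\frac{\partial}{\partial x}F_{0,1}$ is exactly $G$, establishing $G=P(z)$ is precisely the assertion of Proposition~\ref{prop:F01}; no further integration is strictly needed, as the statement is phrased in terms of $x\,\partial_x F_{0,1}$ rather than $F_{0,1}$ itself. The main obstacle will be bookkeeping the implicit differentiation correctly: one must be careful that the partial derivatives are taken in the $(x,s)$ system rather than the $(z,s')$ system of \eqref{eq:variables}, since the two coordinate systems differ and, as the paper warns, this distinction ``turns out to be important later.'' Getting the Jacobian factor $1 - szP'(z)$ and the chain rule right is where errors are most likely to creep in, but the computation is otherwise routine once the characteristic structure is identified.
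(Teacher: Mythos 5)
Your proposal is correct and follows essentially the same route as the paper: both specialise the cut-and-join recursion to $(g,n)=(0,1)$ to obtain $\frac{\partial}{\partial s} F_{0,1} = \tfrac{1}{2}\big(x\frac{\partial}{\partial x} F_{0,1}\big)^2$, apply $x\frac{\partial}{\partial x}$ to get the Burgers-type equation for $G = x\frac{\partial}{\partial x}F_{0,1}$, and pass to the $z$-coordinate via $x = z\exp(-sP(z))$. The only divergence is the final step --- the paper solves the resulting PDE perturbatively in $s'$ and shows all higher coefficients vanish, whereas you verify the candidate $G = P(z)$ directly by implicit differentiation and invoke uniqueness of the formal power-series solution --- and both closings are valid.
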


\begin{proof}
With the change of variables of equation~\eqref{eq:variables}, we have $x \frac{\partial}{\partial x} = \frac{z}{1-s'zP'(z)} \frac{\partial}{\partial z}$ and $\frac \partial{\partial s} = \frac \partial{\partial s'} + \frac{zP(z)}{1- s'zP'(z)}\frac \partial{\partial z }$. Following Proposition~\ref{prop:cutandjoin1}, the differential equation satisfied by $F_{0,1}(x)$ is
\[
\frac{\partial}{\partial s} F_{0,1}(x) = \frac{1}{2} \bigg[ x \frac{\partial}{\partial x} F_{0,1}(x) \bigg]^2.
\]
Apply the operator $x\frac \partial{\partial x}$ to both sides and switch to the variables $(z,s')$ to obtain the equation
\[
\bigg( \frac \partial{\partial s'} + \frac{zP(z)}{1- s'zP'(z)}\frac \partial{\partial z} \bigg) x\frac \partial{\partial x} F_{0,1}(x) = \bigg( x\frac \partial {\partial x} F_{0,1}(x) \bigg) \frac z{1- s'zP'(z)}\frac \partial{\partial z} \bigg( x \frac \partial{\partial x} F_{0,1}(x) \bigg).
\]

We have the initial condition $\left. x\frac \partial{\partial x}F_{0,1}(x) \right|_{s' = 0} = P \big( \left. x(z,s') \right|_{s' = 0} \big) = P(z)$. So one can solve the equation perturbatively by introducing an expansion of the form
\[
x\frac \partial{\partial x} F_{0,1}(x) = P(z) + P_1(z) s' + P_2(z) s'^2 + \cdots.
\]
Substituting the perturbative expansion into the equation produces a system of equations, allowing one to explicitly determine the coefficients. Solving the system, we conclude that $P_i(z) = 0$ for every positive integer $i$.
\end{proof}

\begin{corollary}
From the previous proposition, we immediately deduce that $\Omega_{0,1}(x_1) = \frac{y(z_1) \, \dd x(z_1)}{x(z_1)}$. Due to equation~\eqref{eq:trbases}, this verifies the $(g,n) = (0,1)$ case of our main conjecture.
\end{corollary}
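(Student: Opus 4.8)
The plan is to unwind the definition of $\Omega_{0,1}$ and invoke Proposition~\ref{prop:F01} directly. From equation~\eqref{eq:fenergies} we have $\Omega_{0,1}(x_1) = \dd_1 F_{0,1}(x_1) = \frac{\partial}{\partial x_1} F_{0,1}(x_1) \, \dd x_1$, so the entire task reduces to identifying the single partial derivative $\frac{\partial}{\partial x_1} F_{0,1}$.

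The key manipulation is to insert a factor of $x_1$ in order to recognise a logarithmic derivative, writing
\[
\Omega_{0,1}(x_1) = \frac{1}{x_1} \bigg( x_1 \frac{\partial}{\partial x_1} F_{0,1}(x_1) \bigg) \, \dd x_1.
\]
Proposition~\ref{prop:F01} identifies the bracketed quantity as $P(z_1)$, which by equation~\eqref{eq:scurve} is exactly $y(z_1)$. Substituting $x_1 = x(z_1)$ then gives
\[
\Omega_{0,1}(x_1) = \frac{y(z_1) \, \dd x(z_1)}{x(z_1)},
\]
which coincides with the base case $\omega_{0,1}$ prescribed in equation~\eqref{eq:trbases}. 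This establishes the $(g,n) = (0,1)$ instance of Conjecture~\ref{con:main}.

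I expect no serious obstacle here, since the analytic content has already been absorbed into Proposition~\ref{prop:F01}; what remains is purely the bookkeeping of matching the free-energy differential against the prescribed base case of the topological recursion. The one point that warrants attention is consistency of the change of variables between the $\mathbb{C}^*$-coordinate $x$ and the spectral-curve coordinate $z$, as set up in equation~\eqref{eq:variables}. Once the substitution $x_1 = x(z_1)$ is applied throughout, the identification is forced and the verification is complete.
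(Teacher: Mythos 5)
Your proposal is correct and is precisely the computation the paper leaves implicit: the corollary is stated as an immediate consequence of Proposition~\ref{prop:F01}, and writing $\Omega_{0,1}(x_1) = \dd_1 F_{0,1}(x_1) = \frac{1}{x_1}\big(x_1\frac{\partial}{\partial x_1}F_{0,1}(x_1)\big)\,\dd x_1$ and substituting $x_1\frac{\partial}{\partial x_1}F_{0,1}(x_1) = P(z_1) = y(z_1)$ is exactly the intended one-line deduction. No issues.
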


Next, we use the cut-and-recursion in the case $(g,n) = (0,2)$.

\begin{proposition} \label{prop:F02}
The following equations hold.
\begin{align*}
\Omega_{0,2}(x_1, x_2) &= \frac{\dd z_1 \, \dd z_2}{(z_1-z_2)^2} - \frac{\dd x_1 \, \dd x_2}{(x_1-x_2)^2} \\
x_1 \frac{\partial}{\partial x_1} F_{0,2}(x_1, x_2) &= -\frac{x_2}{x_1-x_2} + \frac{z_2}{(z_1-z_2) (1 - s' z_1 P'(z_1))}
\end{align*}
\end{proposition}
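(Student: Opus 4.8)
The plan is to specialise the free-energy cut-and-join recursion of Proposition~\ref{prop:cutandjoin1} to $(g,n)=(0,2)$. In this case the genus-reducing term involves $F_{-1,3}$, which vanishes, and the only surviving split term pairs $F_{0,2}$ with $F_{0,1}$. Writing $G = x_1\frac{\partial}{\partial x_1}F_{0,2}$ and $\tilde G = x_2\frac{\partial}{\partial x_2}F_{0,2}$ and substituting $x\frac{\partial}{\partial x}F_{0,1} = P(z)$ from Proposition~\ref{prop:F01}, the recursion collapses to
\[
\frac{\partial}{\partial s}F_{0,2} = \frac{x_2 P(z_1) - x_1 P(z_2)}{x_1 - x_2} + P(z_1)\,G + P(z_2)\,\tilde G.
\]

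Next I would apply the operator $x_1\frac{\partial}{\partial x_1}$ to both sides. Since it commutes with $\frac{\partial}{\partial s}$ in the $(x_1,x_2,s)$ coordinates and with $x_2\frac{\partial}{\partial x_2}$, the result is a closed equation for $G$ alone. I would then switch to the variables $(z_1,z_2,s')$ of equation~\eqref{eq:variables}, using $x_i\frac{\partial}{\partial x_i} = \frac{z_i}{1-s'z_iP'(z_i)}\frac{\partial}{\partial z_i}$ together with the chain rule for $\frac{\partial}{\partial s}$ from the proof of Proposition~\ref{prop:F01}. The key simplification is the operator identity
\[
\frac{\partial}{\partial s} - P(z_1)\,x_1\frac{\partial}{\partial x_1} - P(z_2)\,x_2\frac{\partial}{\partial x_2} = \frac{\partial}{\partial s'},
\]
with $\frac{\partial}{\partial s'}$ taken at fixed $z_1,z_2$; the transport terms cancel exactly and the equation reduces to the linear ordinary differential equation
\[
\frac{\partial}{\partial s'}G = \frac{z_1 P'(z_1)}{1-s'z_1P'(z_1)}\,G + \left(x_1\frac{\partial}{\partial x_1}\right)\!\left[\frac{x_2 P(z_1) - x_1 P(z_2)}{x_1-x_2}\right]
\]
in $s'$ at fixed $z_1,z_2$.

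I would solve this using the integrating factor $1 - s'z_1 P'(z_1)$, which exactly annihilates the homogeneous coefficient, subject to the initial condition $G|_{s'=0}=0$ coming from $F_{0,2}|_{s=0}=0$. Integrating the inhomogeneous term over $s'$ — or, equivalently, verifying that the stated closed form satisfies the differential equation and the initial condition and invoking uniqueness for the linear ODE — yields the formula for $x_1\frac{\partial}{\partial x_1}F_{0,2}$. Finally, the multidifferential identity follows by forming $\Omega_{0,2} = \dd_2\!\left[G\,\frac{\dd x_1}{x_1}\right]$ and substituting $\frac{\dd x_1}{x_1} = \frac{1-s'z_1P'(z_1)}{z_1}\dd z_1$: the factor $1-s'z_1P'(z_1)$ cancels the matching denominator in $G$ to produce $\frac{\dd z_1\,\dd z_2}{(z_1-z_2)^2}$, while the elementary term $-\frac{x_2}{x_1-x_2}$ produces $-\frac{\dd x_1\,\dd x_2}{(x_1-x_2)^2}$.

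The main obstacle is the explicit handling of the inhomogeneous term $x_1\frac{\partial}{\partial x_1}\!\left[\frac{x_2 P(z_1)-x_1P(z_2)}{x_1-x_2}\right]$, which mixes the $x$- and $z$-coordinates and must be rewritten in $(z_1,z_2,s')$ before it can be integrated against the integrating factor; one must track the poles at $x_1=x_2$ and $z_1=z_2$ carefully and confirm that they organise into the two simple terms claimed. Verifying the proposed closed form directly is the least error-prone route, and the symmetry of the resulting $\Omega_{0,2}$ under exchanging the two points provides a useful consistency check.
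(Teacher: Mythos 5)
Your proposal is correct and follows the same overall route as the paper: specialise Proposition~\ref{prop:cutandjoin1} to $(g,n)=(0,2)$, substitute $x\frac{\partial}{\partial x}F_{0,1}=P(z)$ from Proposition~\ref{prop:F01}, pass to the coordinates $(z_1,z_2,s')$, and exploit the identity $\frac{\partial}{\partial s}=\frac{\partial}{\partial s'}+\sum_i P(z_i)\,x_i\frac{\partial}{\partial x_i}$ so that the $F_{0,1}\times F_{0,2}$ terms cancel the transport part of $\frac{\partial}{\partial s}$. The one place you diverge is the order of operations: you apply $x_1\frac{\partial}{\partial x_1}$ \emph{before} integrating, which turns the problem into a first-order linear ODE for $G=x_1\frac{\partial}{\partial x_1}F_{0,2}$ with the extra homogeneous coefficient $\frac{z_1P'(z_1)}{1-s'z_1P'(z_1)}$ (coming from $x_1\frac{\partial}{\partial x_1}P(z_1)$) and the inhomogeneous term $x_1\frac{\partial}{\partial x_1}\bigl[\frac{x_2P(z_1)-x_1P(z_2)}{x_1-x_2}\bigr]$ that you rightly identify as the main nuisance. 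The paper integrates first: the cancellation already yields the scalar equation $\frac{\partial}{\partial s'}F_{0,2}=\frac{x_2P(z_1)-x_1P(z_2)}{x_1-x_2}$, and since $\frac{\partial}{\partial s'}x_i=-P(z_i)\,x_i$ at fixed $z_i$, the right side is manifestly $\frac{\partial}{\partial s'}\bigl[-\log\frac{x_1-x_2}{z_1-z_2}-s'P(z_1)-s'P(z_2)\bigr]$, so $F_{0,2}$ is obtained in closed form with no integrating factor; both displayed identities then follow by applying $\dd_1\dd_2$ and $x_1\frac{\partial}{\partial x_1}$ respectively. Your integrating factor $1-s'z_1P'(z_1)$ is the correct one and your verification-plus-uniqueness fallback is sound, so your argument does go through; the paper's ordering simply dissolves the obstacle you flagged.
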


\begin{proof}
The cut-and-join recursion of Proposition~\ref{prop:cutandjoin1} in the case $(g,n) = (0,2)$ states the following.
\[
\frac \partial{\partial s} F_{0,2}(x_1,x_2) = \sum_{i=1}^2 \bigg[ x_i \frac{\partial}{\partial x_i}F_{0,1}(x_i) \bigg] \bigg[ x_i \frac{\partial}{\partial x_i} F_{0,2}(x_1,x_2) \bigg] + \frac{x_1x_2}{x_1 - x_2}\bigg[ \frac \partial{\partial x_1} F_{0,1}(x_1) - \frac \partial{\partial x_2} F_{0,1}(x_2)\bigg].
\]
From the change of variables of equation~\eqref{eq:variables}
\[
x_i = z_i \exp\left(- s' P(z_i) \right) \qquad \text{and} \qquad s = s',
\]
we have $x_i \frac{\partial}{\partial x_i} = \frac{z_i}{1-s'z_iP'(z_i)} \frac{\partial}{\partial z_i}$ and $\frac \partial{\partial s} = \frac \partial{\partial s'} + \frac{z_1P(z_1)}{1- s'z_1P'(z_1)}\frac \partial{\partial z_1} + \frac{z_2P(z_2)}{1- s'z_2P'(z_2)}\frac \partial{\partial z_2}$. If we furthermore substitute $x_i \frac \partial{\partial x_i} F_{0,1}(x_i) = P(z_i)$ from Proposition~\ref{prop:F01} and simplify, we obtain
\[ 
\frac \partial{\partial s'} F_{0,2}(x_1, x_2) = \frac{x_2 P(z_1) - x_1 P(z_2)}{x_1 - x_2}.
\]
Now integrate and use the fact that $\left.F_{0,2}(x_1, x_2) \right|_{s = s' = 0} = 0$ to obtain
\[
F_{0,2}(x_1, x_2) = -\log\bigg( \frac{x_1 - x_2}{z_1 - z_2} \bigg) - s'P(z_1) - s' P(z_2).
\]
One deduces the first equation of the proposition by applying $\dd_1 \dd_2$ directly to both sides and the second by applying $x_1 \frac{\partial}{\partial x_1}$.
\end{proof}

We may now substitute the calculations of $F_{0,1}$ and $F_{0,2}$ from Propositions~\ref{prop:F01} and~\ref{prop:F02}, respectively, into the cut-and-join recursion. We obtain the following result, which bears a similar structure to that of the topological recursion.

\begin{corollary} \label{cor:cutjoin2}
The free energies satisfy the following equation for $2g-2+n>0$.
\begin{align*}
\bigg[ \frac{\partial}{\partial s} - \sum_{i=1}^n P(z_i) x_i \frac{\partial}{\partial x_i} \bigg] F_{g,n}(x_1, \ldots, x_n) &= \frac{1}{2} \sum_{i=1}^n \bigg[ u_1 u_2 \frac{\partial^2}{\partial u_1 \, \partial u_2} F_{g-1,n+1}(u_1, u_2, \xx_{S \setminus \{i\}}) \bigg]_{u_1=x_i, u_2=x_i} \\
&+ \frac{1}{2} \sum_{i=1}^n \mathop{\sum_{g_1+g_2=g}}_{I \sqcup J = S \setminus \{i\}}^{\mathrm{stable}} \bigg[ x_i \frac{\partial}{\partial x_i} F_{g_1,|I|+1}(x_i, \xx_I) \bigg] \bigg[ x_i \frac{\partial}{\partial x_i} F_{g_2,|J|+1}(x_i, \xx_J) \bigg] \\
&+ \sum_{i\ne j} \frac{z_j}{(z_i-z_j) (1-s'z_iP'(z_i))} x_i \frac{\partial}{\partial x_i} F_{g,n-1}(\xx_{S \setminus \{j\}})
\end{align*}
The word ``stable'' over the inner summation on the second line means that we exclude terms involving $F_{0,1}$ or $F_{0,2}$. Equivalently, using the change of variables $x_i = z_i \exp\left( -s' P(z_i) \right)$ and $s = s'$, we have $x_i \frac \partial{\partial x_i} = \frac{z_i}{1 - s' z_i P'(z_i)} \frac \partial{\partial z_i}$ and $ \frac{\partial}{\partial s} = \frac \partial {\partial s'} + \sum P(z_i) x_i \frac{\partial}{\partial x_i}$. So the free energies satisfy the following equation for $2g-2+n>0$.
\begin{align*}
\frac{\partial}{\partial s'} F_{g,n}(x_1, \ldots, x_n) &= 
 \frac{1}{2} \sum_{i=1}^n \frac{z_i^2}{(1 - s'z_i P'(z_i))^2} \bigg[ \frac{\partial^2}{\partial v_1 \, \partial v_2} F_{g-1,n+1}(u_1, u_2, \xx_{S \setminus \{i\}}) \bigg]_{\substack{u_1=u_2=x_i \\ v_1=v_2=z_1}} \\
&+ \frac{1}{2} \sum_{i=1}^n \mathop{\sum_{g_1+g_2=g}}_{I \sqcup J = S \setminus \{i\}}^{\mathrm{stable}} \frac{z_i^2}{(1 - s'z_i P'(z_i))^2} \bigg[ \frac{\partial}{\partial z_i} F_{g_1,|I|+1}(x_i, \xx_I) \bigg] \bigg[ \frac{\partial}{\partial z_i} F_{g_2,|J|+1}(x_i, \xx_J) \bigg] \\
&+ \sum_{i\ne j} \frac{z_i z_j}{(z_i-z_j) (1-s'z_iP'(z_i))^2} \frac{\partial}{\partial z_i} F_{g,n-1}(\xx_{S \setminus \{j\}})
\end{align*}
\end{corollary}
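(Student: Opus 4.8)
The plan is to start from the cut-and-join recursion in the generating-function form of Proposition~\ref{prop:cutandjoin1} and to feed in the closed expressions for the unstable free energies $F_{0,1}$ and $F_{0,2}$ supplied by Propositions~\ref{prop:F01} and~\ref{prop:F02}. The right-hand side of Proposition~\ref{prop:cutandjoin1} splits into three groups of terms: the ``join'' line built from $F_{g,n-1}$, the ``cut'' line built from $F_{g-1,n+1}$, and the product line $\frac12 \sum_i \sum_{g_1+g_2=g,\, I \sqcup J = S\setminus\{i\}}[\,\cdots\,][\,\cdots\,]$. The cut line is already in final form and is carried across unchanged, so all of the work lies in disentangling the product line, whose two factors may individually be of the unstable types $F_{0,1}$ or $F_{0,2}$, and in recombining the resulting pieces with the join line.

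First I would rewrite the join line as a single sum over ordered pairs, namely $\sum_{i \neq j} \frac{x_j}{x_i - x_j}\, x_i \frac{\partial}{\partial x_i} F_{g,n-1}(\xx_{S\setminus\{j\}})$, by relabelling the two halves of the antisymmetric bracket. Next I would split the product line according to the type of each factor. Whenever a factor is $F_{0,1}$, Proposition~\ref{prop:F01} replaces $x_i \frac{\partial}{\partial x_i}F_{0,1}(x_i)$ by $P(z_i)$; the two orderings of such a term absorb the leading $\frac12$ and produce exactly $\sum_i P(z_i)\, x_i \frac{\partial}{\partial x_i} F_{g,n}$, which I move to the left in order to build the operator $\frac{\partial}{\partial s} - \sum_i P(z_i) x_i \frac{\partial}{\partial x_i}$. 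Whenever a factor is $F_{0,2}$, Proposition~\ref{prop:F02} replaces $x_i \frac{\partial}{\partial x_i}F_{0,2}(x_i,x_j)$ by $-\frac{x_j}{x_i-x_j} + \frac{z_j}{(z_i-z_j)(1-s'z_iP'(z_i))}$, and again the two orderings absorb the $\frac12$. The crucial cancellation is that the rational-in-$x$ summand $-\frac{x_j}{x_i-x_j}$ precisely annihilates the rewritten join line, leaving only $\sum_{i\neq j} \frac{z_j}{(z_i-z_j)(1-s'z_iP'(z_i))}\, x_i \frac{\partial}{\partial x_i}F_{g,n-1}(\xx_{S\setminus\{j\}})$. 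What survives of the product line is exactly the sum over pairs of \emph{stable} factors, which yields the first displayed equation.

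Passing to the second displayed equation is then a routine change of variables: using $x_i = z_i \exp(-s'P(z_i))$ one verifies the operator identities $x_i \frac{\partial}{\partial x_i} = \frac{z_i}{1-s'z_iP'(z_i)}\frac{\partial}{\partial z_i}$ and $\frac{\partial}{\partial s} = \frac{\partial}{\partial s'} + \sum_i P(z_i)\, x_i \frac{\partial}{\partial x_i}$, so that the left-hand operator collapses to $\frac{\partial}{\partial s'}$ and each surviving $x_i\frac{\partial}{\partial x_i}$ in the cut, stable and join terms contributes a factor $\frac{z_i}{1-s'z_iP'(z_i)}$. I expect the main obstacle to be the bookkeeping of the unstable contributions rather than any single hard estimate; in particular, the symmetrisation that removes the factor $\frac12$ must be carried out consistently, and the case $(g,n) = (0,3)$ is genuinely delicate, since there $F_{g,n-1}$ is itself $F_{0,2}$ and so $F_{0,2}$ appears simultaneously as a product factor and as the object sitting in the join line. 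This means the ``exactly one $F_{0,2}$ factor'' accounting used in the generic step degenerates, and I would treat $(0,3)$ as a separate check to confirm that the cancellation of the $-\frac{x_j}{x_i-x_j}$ terms still proceeds as claimed before asserting the identity in full generality.
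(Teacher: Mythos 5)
Your argument is the paper's own: the paper offers no proof beyond ``substitute the calculations of $F_{0,1}$ and $F_{0,2}$ into the cut-and-join recursion,'' and your bookkeeping of the unstable factors --- the two orderings of an $F_{0,1}$ factor yielding $\sum_i P(z_i)\,x_i\frac{\partial}{\partial x_i}F_{g,n}$, the two orderings of an $F_{0,2}$ factor yielding $\sum_{i\neq j}\big({-\tfrac{x_j}{x_i-x_j}}+\tfrac{z_j}{(z_i-z_j)(1-s'z_iP'(z_i))}\big)\,x_i\frac{\partial}{\partial x_i}F_{g,n-1}(\xx_{S\setminus\{j\}})$, with the first summand cancelling the symmetrised join line --- is exactly right whenever $F_{g,n-1}$ is stable. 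Your flagged worry about $(0,3)$ is not a formality: there the ordered pair $(I,J)=(\{j\},\{k\})$ and its reversal contribute only \emph{one} copy of $\big[x_i\frac{\partial}{\partial x_i}F_{0,2}(x_i,x_j)\big]\big[x_i\frac{\partial}{\partial x_i}F_{0,2}(x_i,x_k)\big]$ after the overall $\tfrac12$, whereas the displayed third line, read with $F_{g,n-1}=F_{0,2}$, produces one copy for $j$ and another for $k$; the stated identity therefore overcounts by exactly $\sum_i\big[x_i\frac{\partial}{\partial x_i}F_{0,2}(x_i,x_j)\big]\big[x_i\frac{\partial}{\partial x_i}F_{0,2}(x_i,x_k)\big]$ in that one case. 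So the corollary as displayed holds for $2g-2+n>1$ and for $(g,n)=(1,1)$, but not verbatim for $(0,3)$ --- consistent with the fact that the paper's own $F_{0,3}$ computation in Section~\ref{subsec:lowgenus} works from the intermediate form $\frac{\partial}{\partial s'}F_{0,3}=\text{(join line)}+\sum_{\mathrm{cyclic}}\big[x_1\frac{\partial}{\partial x_1}F_{0,2}(x_1,x_2)\big]\big[x_1\frac{\partial}{\partial x_1}F_{0,2}(x_1,x_3)\big]$ rather than from the displayed formula, and that Theorem~\ref{thm:main} only invokes the corollary for $2g-2+n>1$. Carry out your planned $(0,3)$ check and either record the corrected form there or restrict the stated range of validity.
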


\subsection{Pruned double Hurwitz numbers}

The notion of pruned Hurwitz numbers was introduced by Norbury and the first author~\cite{do-nor17}. The general idea is to interpret Hurwitz numbers as an enumeration of branching graphs and then to restrict the enumeration to those branching graphs without leaves. The upshot is that pruned Hurwitz numbers store the same information as Hurwitz numbers but are in some sense better behaved. Similar to their unpruned counterparts, pruned Hurwitz numbers also satisfy a cut-and-join recursion and exhibit a polynomial structure.

Okounkov and Pandharipande demonstrated how to associate a branching graph to a branched cover $f: (\Sigma; p_1, \ldots, p_n) \to (\mathbb{CP}^1; \infty)$~\cite{oko-pan}. It is essentially the graph embedded on $\Sigma$ formed by the preimage of the star graph on $\mathbb{CP}^1$. To construct the star graph on $\mathbb{CP}^1$, fix the branch points at the $m$th roots of unity and consider \emph{half-edges} connecting a vertex at 0 to the $m$th roots of unity by line segments. The branching graph is endowed with extra labels --- the faces are labelled with the point $p_i$ that they contain, while the half-edges are labelled by the corresponding root of unity. Note that two half-edges meet to create a \emph{full-edge} precisely at the $m$ simple ramification points on $\Sigma$. These considerations lead naturally to the following definition.

\begin{definition}
A \emph{branching graph} of type $(g,n)$ is a graph comprising half-edges and full-edges meeting at vertices, embedded on a genus $g$ oriented surface $\Sigma$ such that the complement consists of $n$ disks labelled from 1 up to $n$. We furthermore require that
\begin{itemize}
\item the half-edges adjacent to a vertex are cyclically labelled $1, 2, \ldots, m; 1, 2, \ldots, m; \ldots ; 1, 2, \ldots, m$; and
\item there are precisely $m$ full-edges, which are labelled $1, 2, \ldots, m$.
\end{itemize}
Define the \emph{perimeter of a face} and the \emph{degree of a vertex} to be the number of times each label appears on an adjacent half-edge. We consider two branching graphs to be equivalent if there is an orientation-preserving diffeomorphism of their underlying surfaces that maps one branching graph to the other, while preserving all of the labels.
\end{definition}

\begin{proposition} \label{prop:bgraphs}
The double Hurwitz number $DH_{g,n}(\mu_1, \ldots \mu_n)$ is the weighted count of branching graphs of type $(g,n)$ such that the face labelled $i$ has perimeter $\mu_i$. The weight of such a branching graph is
\[
\frac{q_{\lambda_1} q_{\lambda_2} \cdots q_{\lambda_{\ell}}}{|\mathrm{Aut}~ \Gamma|} \frac{s^m}{m!},
\]
where $m$ is the number of full-edges, $\mathrm{Aut}~ \Gamma$ is the automorphism group of the branching graph, and $\lambda_1, \lambda_2, \ldots, \lambda_\ell$ denote the degrees of the vertices.
\end{proposition}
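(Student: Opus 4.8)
The plan is to establish a weight-preserving bijection between the branched covers enumerated in Definition~\ref{def:dhurwitz} and the branching graphs of type $(g,n)$, following the construction of Okounkov and Pandharipande~\cite{oko-pan} recalled above. Given a connected genus $g$ branched cover $f: (\Sigma; p_1, \ldots, p_n) \to (\mathbb{CP}^1; \infty)$ as in the definition, I would first normalise the target by placing the $m$ simple branch points at the $m$th roots of unity and drawing the star graph that joins $0$ to each root of unity by a half-edge labelled by the corresponding root. Taking the preimage $f^{-1}$ of this star graph produces a graph $\Gamma$ embedded on $\Sigma$, and the substance of the argument is to check that $\Gamma$ is a branching graph of type $(g,n)$ realising exactly the prescribed combinatorial data, with matching weight.

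The key verifications are local. Away from $0$ and $\infty$ all branching is simple, so over each branch point there is precisely one simple ramification point, where two arcs of the preimage meet to form a full-edge, while the remaining preimages carry dangling half-edges; this yields exactly $m$ full-edges, labelled by their branch point. Near a preimage of $0$ of ramification index $\lambda$ the map looks like $w \mapsto w^\lambda$, so a small loop around the corresponding vertex of $\Gamma$ sweeps out the cyclic sequence $1, 2, \ldots, m$ exactly $\lambda$ times, whence the vertex degree equals $\lambda$ and the vertex degrees are exactly the parts of the ramification profile over $0$. Dually, the complementary regions of $\Gamma$ are neighbourhoods of the preimages of $\infty$, giving $n$ disks, and the disk containing $p_i$ wraps $\mu_i$ times and so has perimeter $\mu_i$. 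Consequently $\Gamma$ has the correct face perimeters, and the monomial $q_{\lambda_1} \cdots q_{\lambda_\ell}$ attached to $\Gamma$ coincides with the weight assigned to $f$.

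To see that the correspondence is a bijection, I would invoke the Riemann existence theorem: a branched cover is determined up to isomorphism by its monodromy representation, and $\Gamma$ encodes precisely this data, since the cyclic ordering of half-edges at the vertices records the monodromy over $0$ while each labelled full-edge records one of the transpositions at a simple branch point. Conversely, thickening a branching graph into a ribbon surface and capping off its faces with disks reconstructs a cover with the same monodromy, and these two operations are mutually inverse at the level of isomorphism classes. Under this identification an automorphism $\phi$ of $f$ preserving the marked points, which satisfies $f \circ \phi = f$ and hence preserves every fibre $f^{-1}(\,\cdot\,)$, restricts to a label-preserving, orientation-preserving self-diffeomorphism of $\Gamma$, and conversely; thus $\mathrm{Aut}\, f \cong \mathrm{Aut}\, \Gamma$ and the factors $\frac{1}{|\mathrm{Aut}|} \frac{s^m}{m!}$ agree on both sides.

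The main obstacle I anticipate is the careful bookkeeping of the three layers of labels --- on faces, half-edges, and full-edges --- together with the equivalence relations imposed on each side, so as to confirm that the two automorphism groups are genuinely isomorphic rather than merely equinumerous. One must also check that the restriction to covers in which no preimage of $0$ has ramification index exceeding $d$ matches the convention that vertices of degree greater than $d$ carry an undefined (equivalently, vanishing) weight, so that the two enumerations range over the same objects. Establishing that the capping-off construction is well defined up to the stated equivalence and is inverse to taking the preimage of the star graph is where the genuine content lies; the local pictures near $0$, near $\infty$, and near the simple branch points are otherwise routine.
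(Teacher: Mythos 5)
Your argument is correct and follows essentially the same route as the paper, which states this proposition without a written proof and treats it as an immediate consequence of the Okounkov--Pandharipande construction (taking the preimage of the star graph, matching local models at $0$, $\infty$ and the simple branch points, and identifying $\mathrm{Aut}\,f$ with $\mathrm{Aut}\,\Gamma$ via the Riemann existence theorem). Your elaboration of the bookkeeping of labels and automorphisms is exactly the content the paper leaves implicit.
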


\begin{definition}
A \emph{leaf} of a branching graph is a vertex that is adjacent to exactly one full-edge, where that full-edge is not a loop. Define the \emph{pruned double Hurwitz number} $PH_{g,n}(\mu_1, \ldots \mu_n)$ to be the weighted count of Proposition~\ref{prop:bgraphs}, restricted to those branching graphs without leaves.
\end{definition}

A table of pruned double Hurwitz numbers appears in Appendix~\ref{sec:pdata}.

The pruned viewpoint was applied to double Hurwitz numbers in the recent work of Hahn~\cite{hah}. Hahn derived a cut-and-join recursion for pruned double Hurwitz numbers, using a different notation and normalisation convention to the one we have adopted in this paper. He furthermore demonstrated that they exhibit piecewise polynomial behaviour and derived a pruning correspondence that relates double Hurwitz numbers to their pruned counterparts. We state the pruning correspondence without proof, since it is a restatement of the pruning correspondence in Hahn's paper~\cite[Theorem~3.4]{hah}. 
However, we wish to draw attention to the fact that it is expressed very naturally using our packaging of the double Hurwitz numbers.

\begin{proposition}[Pruning correspondence, version 1]
For $(g,n) \neq (0,1)$, we have the following equations.
\begin{align*}
DH_{g,n}(\mu_1, \ldots, \mu_n) &= \sum_{\nu_1, \ldots, \nu_n=1}^{\mu_1, \ldots, \mu_n} PH_{g,n}(\nu_1, \ldots, \nu_n) \prod_{i=1}^n C(\mu_i, \nu_i), & \text{where } C(\mu, \nu) = \frac{\nu}{\mu} [z^{\mu-\nu}] \exp(\mu sP(z)) \\
PH_{g,n}(\nu_1, \ldots, \nu_n) &= \sum_{\mu_1, \ldots, \mu_n=1}^{\nu_1, \ldots, \nu_n} DH_{g,n}(\mu_1, \ldots, \mu_n) \prod_{i=1}^n \widehat{C}(\nu_i, \mu_i), & \text{where } \widehat{C}(\nu, \mu) = \frac{\mu}{\nu} [z^{\nu-\mu}] \frac{1-szP'(z)}{\exp(\mu sP(z))}
\end{align*}
\end{proposition}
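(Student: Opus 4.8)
The plan is to reduce both identities to a single change-of-variables statement and then extract coefficients in two different ways. Introduce the \emph{pruned free energy} $\widetilde{F}_{g,n}(z_1,\ldots,z_n) = \sum_{\nu_1,\ldots,\nu_n=1}^\infty PH_{g,n}(\nu_1,\ldots,\nu_n) \prod_i z_i^{\nu_i}$, the analogue of equation~\eqref{eq:fenergies} for pruned numbers. I claim the entire content of the proposition is the assertion that, under the change of variables $x = z\exp(-sP(z))$ of equation~\eqref{eq:variables},
\[
F_{g,n}(x_1,\ldots,x_n) = \sum_{\nu_1,\ldots,\nu_n=1}^{\infty} PH_{g,n}(\nu_1,\ldots,\nu_n) \prod_{i=1}^{n} z(x_i)^{\nu_i},
\]
that is, $F_{g,n}$ in the variables $x_i$ equals $\widetilde{F}_{g,n}$ in the variables $z_i = z(x_i)$. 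The first formula is then obtained by extracting $[x_1^{\mu_1}\cdots x_n^{\mu_n}]$ from both sides, and the second by extracting $[z_1^{\nu_1}\cdots z_n^{\nu_n}]$ from the equivalent identity $\widetilde{F}_{g,n}(z) = F_{g,n}(x(z))$. This reformulation is appealing because the same substitution defines the spectral curve of equation~\eqref{eq:scurve}.

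The two kernels are supplied by Lagrange inversion. Writing the inverse substitution as $z = x\exp(sP(z))$, i.e.\ $z = x\phi(z)$ with $\phi(z) = \exp(sP(z))$ and $\phi(0)=1$, Lagrange inversion with $H(z)=z^\nu$ gives $[x^\mu]z(x)^\nu = \tfrac1\mu[z^{\mu-1}]\nu z^{\nu-1}\exp(\mu sP(z)) = \tfrac{\nu}{\mu}[z^{\mu-\nu}]\exp(\mu sP(z)) = C(\mu,\nu)$, so $z(x)^\nu = \sum_{\mu\ge\nu} C(\mu,\nu)\,x^\mu$; this yields the first formula. For the second, extracting $[z^\nu]$ from $\widetilde{F}_{g,n}(z)=F_{g,n}(x(z))$ and using $x(z)^\mu = z^\mu\exp(-\mu sP(z))$ gives $\widehat{C}(\nu,\mu) = [z^{\nu-\mu}]\exp(-\mu sP(z))$. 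To match the stated form, note that $\frac{\dd}{\dd z}\exp(-\mu sP(z)) = -\mu sP'(z)\exp(-\mu sP(z))$, whence $(1-szP'(z))\exp(-\mu sP(z)) = \sum_j \frac{\mu+j}{\mu}\,[z^{j}]\exp(-\mu sP(z))\,z^{j}$; extracting $[z^{\nu-\mu}]$ and using $\mu+(\nu-\mu)=\nu$ shows $\frac{\mu}{\nu}[z^{\nu-\mu}](1-szP'(z))\exp(-\mu sP(z)) = [z^{\nu-\mu}]\exp(-\mu sP(z)) = \widehat{C}(\nu,\mu)$, exactly the kernel in the proposition. (As a consistency check, $C(\mu,\nu)$ vanishes for $\mu<\nu$ and equals $1$ on the diagonal, so the associated matrix is lower-triangular and unipotent, hence invertible over $\mathbb{C}[[s,q_1,\ldots,q_d]]$, and $\widehat{C}$ is its inverse.)

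It remains to prove the change-of-variables statement itself, which I would establish combinatorially via leaf-pruning of the branching graphs of Proposition~\ref{prop:bgraphs}. Iteratively deleting a leaf together with its unique full-edge is well defined and terminates at a canonical leafless branching graph $\Gamma_0$ of the same type $(g,n)$ and with the same face labels. The decisive structural point is that the deleted material organises into rooted trees, and that each edge of a tree is a bridge of the embedded graph and so borders one and the same face on both sides; hence every pruned-away tree lives entirely within a single face, and the decorations attached to distinct faces are independent. Summing, within the face labelled $i$, over all ways to grow rooted-tree decorations that raise the perimeter from the pruned value $\nu_i$ to the full value $\mu_i$ --- each tree vertex of degree $\lambda$ weighted by $q_\lambda$ and each tree full-edge contributing to the power of $s$ --- produces precisely the factor $C(\mu_i,\nu_i)$, the $n$ faces contributing the product $\prod_i C(\mu_i,\nu_i)$. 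The per-slot generating function for these decorations satisfies the implicit equation $z = x\exp(sP(z))$, so that $C(\mu,\nu)$ reappears through the very Lagrange inversion used above.

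The main obstacle is the symmetry-factor bookkeeping in this last step: one must reconcile the normalisation $\frac{s^m}{m!\,|\mathrm{Aut}\,\Gamma|}$ of $\Gamma$ with the corresponding normalisation for the core $\Gamma_0$ together with the automorphisms of the attached trees, and verify that the redistribution of the full-edge labels $1,\ldots,m$ between the core and the trees is correctly accounted for. This is the genuinely combinatorial content, and it is precisely what is carried out in Hahn's proof of the pruning correspondence~\cite{hah}; everything else in the argument is formal Lagrange inversion and coefficient extraction.
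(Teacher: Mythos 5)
Your formal reductions are correct and check out: Lagrange inversion applied to $z = x\exp(sP(z))$ does give $[x^\mu]z(x)^\nu = \tfrac{\nu}{\mu}[z^{\mu-\nu}]\exp(\mu sP(z)) = C(\mu,\nu)$, and your identity $\tfrac{\mu}{\nu}[z^{\nu-\mu}](1-szP'(z))\exp(-\mu sP(z)) = [z^{\nu-\mu}]\exp(-\mu sP(z))$ correctly identifies $\widehat{C}(\nu,\mu)$ with the coefficient extraction $[z^{\nu}]x(z)^{\mu}$. Be aware, though, that the paper offers no proof of this proposition at all --- it is stated as a restatement of Hahn's Theorem~3.4 --- so the only in-paper computation to compare against is the derivation of version~2 \emph{from} version~1 (the residue calculation in the proof of the corollary, which is exactly your $\widehat{C}$ manipulation run in the opposite direction). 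You invert the logical order: you take the generating-function identity $F_{g,n}(x) = \sum_{\nnu} PH_{g,n}(\nnu)\prod z(x_i)^{\nu_i}$ as primary and derive both kernel formulas from it, which is a clean and arguably more transparent organisation, and your triangularity/unipotence remark correctly explains why the two formulas are mutually inverse. The one genuine caveat is that the combinatorial heart --- that leaf-pruning of the branching graphs of Proposition~\ref{prop:bgraphs} strips off per-face rooted-tree decorations whose weighted generating function is governed by $z = x\exp(sP(z))$, with the $\tfrac{s^m}{m!\,|\mathrm{Aut}\,\Gamma|}$ normalisation and the redistribution of edge labels accounted for --- is only sketched and explicitly outsourced to Hahn~\cite{hah}; since Hahn's theorem is essentially version~1 itself, your argument is not an independent proof but a (correct) repackaging of it, which is precisely the status the proposition has in the paper.
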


Note that $C(\mu, \nu)$ is a weighted homogeneous polynomial in $sq_1, sq_2, \ldots$ with positive rational coefficients, while $\widehat{C}(\nu, \mu)$ is a weighted homogeneous polynomial in $sq_1, sq_2, \ldots$ with rational coefficients.

In fact, the pruning correspondence can be even more succinctly stated in the language of the free energies. In particular, the pruned double Hurwitz numbers arise as coefficients in the expansion with respect to the rational parameters $z_1, \ldots, z_n$.

\begin{corollary}[Pruning correspondence, version 2]
For $(g,n) \neq (0,1)$, we have the following equations.
\begin{align*}
F_{g,n} &= \sum_{\nu_1, \ldots, \nu_n=1}^\infty PH_{g,n}(\nu_1, \ldots, \nu_n) \prod_{i=1}^n z_i^{\nu_i} \\
\Omega_{g,n} &= \sum_{\nu_1, \ldots, \nu_n=1}^\infty PH_{g,n}(\nu_1, \ldots, \nu_n) \prod_{i=1}^n \nu_i z_i^{\nu_i-1} \dd z_i
\end{align*}
\end{corollary}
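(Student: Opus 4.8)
The plan is to reduce the entire statement to a single Lagrange inversion identity and then substitute the first version of the pruning correspondence into the definition \eqref{eq:fenergies} of the free energies. The only input specific to the weights $C(\mu,\nu)$ will be the following identity, which I regard as the crux of the proof:
\[
z^\nu = \sum_{\mu=1}^\infty C(\mu,\nu)\, x^\mu,
\]
where $x = z\exp(-sP(z))$ is the change of variables of equation~\eqref{eq:variables} and $C(\mu,\nu) = \frac{\nu}{\mu}[z^{\mu-\nu}]\exp(\mu sP(z))$. To prove it, I would rewrite the relation as $z = x\exp(sP(z)) = x\,\phi(z)$ with $\phi(z) = \exp(sP(z))$, noting that $\phi(0) = 1 \neq 0$ since $P(0) = 0$. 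The Lagrange--B\"urmann formula then gives $[x^\mu]\,z(x)^\nu = \frac{1}{\mu}[z^{\mu-1}]\big(\nu z^{\nu-1}\phi(z)^\mu\big) = \frac{\nu}{\mu}[z^{\mu-\nu}]\exp(\mu sP(z)) = C(\mu,\nu)$. Because $C(\mu,\nu) = 0$ whenever $\mu < \nu$, the displayed sum ranges over $\mu \geq \nu$ and reproduces exactly the power series $z(x)^\nu$.

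Next I would substitute the first version of the pruning correspondence into \eqref{eq:fenergies} and interchange the orders of summation:
\begin{align*}
F_{g,n} &= \sum_{\mu_1,\ldots,\mu_n=1}^\infty \Bigg( \sum_{\nu_1,\ldots,\nu_n=1}^{\mu_1,\ldots,\mu_n} PH_{g,n}(\nu_1,\ldots,\nu_n)\prod_{i=1}^n C(\mu_i,\nu_i)\Bigg)\prod_{i=1}^n x_i^{\mu_i} \\
&= \sum_{\nu_1,\ldots,\nu_n=1}^\infty PH_{g,n}(\nu_1,\ldots,\nu_n)\prod_{i=1}^n\Bigg(\sum_{\mu_i=1}^\infty C(\mu_i,\nu_i)\, x_i^{\mu_i}\Bigg) \\
&= \sum_{\nu_1,\ldots,\nu_n=1}^\infty PH_{g,n}(\nu_1,\ldots,\nu_n)\prod_{i=1}^n z_i^{\nu_i}.
\end{align*}
The interchange is legitimate in the ring of formal power series: since $C(\mu_i,\nu_i)$ vanishes unless $\nu_i \leq \mu_i$, the coefficient of any fixed monomial $\prod_i x_i^{\mu_i}$ receives contributions from only finitely many tuples $(\nu_1,\ldots,\nu_n)$. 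This establishes the first displayed equation of the corollary.

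Finally, the second equation follows by applying $\dd_1\cdots\dd_n$ to the first. Since $\Omega_{g,n} = \dd_1\cdots\dd_n F_{g,n}$ and the exterior derivative $\dd_i$ in the $i$th slot is intrinsic, it may be computed in the $z_i$-coordinate at fixed $s$; each factor then transforms as $\dd_i\big(z_i^{\nu_i}\big) = \nu_i z_i^{\nu_i-1}\dd z_i$, yielding $\Omega_{g,n} = \sum_{\nu} PH_{g,n}(\nu_1,\ldots,\nu_n)\prod_i \nu_i z_i^{\nu_i-1}\dd z_i$, as claimed.

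I expect the only genuine obstacle to be the clean identification of $C(\mu,\nu)$ as a Lagrange inversion coefficient, together with careful bookkeeping of the $s$-dependence. One must keep in mind that $z_i = z_i(x_i,s)$ and that both $PH_{g,n}$ and the change of variables carry the same parameter $s = s'$, so that the exterior derivative $\dd_i$ is taken at fixed $s$; this is precisely why expanding $F_{g,n}$ in the rational coordinates $z_i$ rather than in $x_i$ is what exposes the pruned numbers as coefficients. Once the Lagrange inversion step is set up correctly, the remainder is a formal manipulation inheriting the restriction $(g,n) \neq (0,1)$ from the first version of the pruning correspondence.
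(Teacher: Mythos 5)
Your proof is correct, but it runs the argument in the opposite direction from the paper. The paper starts from the multidifferential $\Omega_{g,n}$, extracts its coefficients in the $z$-expansion by computing $\res_{z_1=0}\cdots\res_{z_n=0}\Omega_{g,n}\prod z_i^{-\nu_i}$, and recognises the resulting weights $[z_i^{\nu_i-\mu_i}]\tfrac{1-sz_iP'(z_i)}{\exp(\mu_i sP(z_i))}$ as the kernel $\widehat{C}(\nu_i,\mu_i)$ appearing in the \emph{second} equation of version~1 of the pruning correspondence; this only requires expanding the explicit function $x(z)$ and its differential, after which the statement for $F_{g,n}$ is obtained by integrating. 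You instead use the \emph{first} equation of version~1, substitute it into the definition~\eqref{eq:fenergies}, and reduce everything to the identity $z^\nu=\sum_\mu C(\mu,\nu)x^\mu$, which you establish by Lagrange--B\"urmann applied to $z=x\exp(sP(z))$; you then differentiate to pass from $F_{g,n}$ to $\Omega_{g,n}$. Both routes are sound: yours makes the conceptual content transparent (the kernel $C(\mu,\nu)$ \emph{is} the Lagrange inversion coefficient of the change of variables~\eqref{eq:variables}, so the correspondence is literally a change of expansion variable), at the cost of invoking the inverse expansion of $z$ in powers of $x$; the paper's residue computation is more elementary in that it only ever expands $x$ as an explicit function of $z$, but it leaves the role of Lagrange inversion implicit. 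Your justification of the interchange of summations via the vanishing of $C(\mu,\nu)$ for $\mu<\nu$, and your observation that $\dd_i$ is coordinate-free and taken at fixed $s$, are both correct and complete the argument.
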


\begin{proof}
We simply compute the coefficients of $\Omega_{g,n}$ in the $z_i$-expansion.
\begin{align*}
\mathop{\text{Res}}_{z_1=0} \cdots \mathop{\text{Res}}_{z_n=0} \Omega_{g,n} \prod_{i=1}^n z_i^{-\nu_i} &= \mathop{\text{Res}}_{z_1=0} \cdots \mathop{\text{Res}}_{z_n=0} \sum_{\mu_1, \ldots, \mu_n=1}^\infty DH_{g,n}(\mu_1, \ldots, \mu_n) \prod_{i=1}^n \mu_i z_i^{-\nu_i} x_i^{\mu_i-1} \dd x_i \\
&= \sum_{\mu_1, \ldots, \mu_n=1}^\infty DH_{g,n}(\mu_1, \ldots, \mu_n) \prod_{i=1}^n \mu_i [z_i^{\nu_i-1}] x_i^{\mu_i-1} \frac{\dd x_i}{\dd z_i} \\
&= \sum_{\mu_1, \ldots, \mu_n=1}^\infty DH_{g,n}(\mu_1, \ldots, \mu_n) \prod_{i=1}^n \mu_i [z_i^{\nu_i-\mu_i}] \frac{1-sz_iP'(z_i)}{\exp(\mu_isP(z_i))} \\
&= \sum_{\mu_1, \ldots, \mu_n=1}^{\nu_1, \ldots, \nu_n} DH_{g,n}(\mu_1, \ldots, \mu_n) \prod_{i=1}^n \nu_i \widehat{C}(\nu_i, \mu_i) = PH_{g,n}(\nu_1, \ldots, \nu_n) \prod_{i=1}^n \nu_i
\end{align*}
The desired statement for the multidifferentials $\Omega_{g,n}$ follows immediately, while the desired statement for the free energies $F_{g,n}$ can be obtained by integrating.
\end{proof}

We do not state here the cut-and-join recursion for pruned double Hurwitz numbers. However, we remark that it can be obtained either combinatorially, as performed by Hahn~\cite{hah}, or equivalently by expressing Corollary~\ref{cor:cutjoin2} at the level of coefficients in the $z_i$-expansion.

In Section~\ref{sec:proof}, we will reduce our main conjecture to a weaker conjecture concerning the structure of double Hurwitz numbers. We have introduced pruned double Hurwitz numbers since enumeration of pruned objects in general can help to reveal structure of the underlying enumeration~\cite{do-nor17}.

\section{Evidence for the conjecture} \label{sec:evidence}

\subsection{The quantum curve} \label{subsec:qcurve}

It is known that there is a quantum curve underlying many instances of the topological recursion and conversely, the existence of a quantum curve can predict spectral curves and topological recursion~\cite{nor16}. We now consider the quantum curve for double Hurwitz numbers and propose it as evidence towards the main conjecture.

Informally, a quantum curve for a spectral curve $P(x, y) = 0$ is a certain operator that can be expressed in terms of a non-commutative polynomial $\widehat{P}(\widehat{x}, \widehat{y})$ in the multiplication operator $\widehat{x} = x$ and the differential operator $\widehat{y} = \h x \frac{\partial}{\partial x}$ such that\footnote{The choice of polarisation --- in other words, the operators $\widehat{x}$ and $\widehat{y}$ --- relates to the fact that we are dealing with a $\mathbb{C}^* \times \mathbb{C}$ spectral curve. For a $\mathbb{C} \times \mathbb{C}$ spectral curve, it is common to use $\widehat{y} = \h \frac{\partial}{\partial x}$. In both cases though, we have the canonical commutation relation $[ \widehat{x}, \widehat{y}] = -\h$.}
\begin{itemize}
\item the semi-classical limit of $\widehat{P}(\widehat{x}, \widehat{y})$ recovers $P(x, y)$; and
\item $\widehat{P}(\widehat{x}, \widehat{y})$ annihilates the wave function.
\end{itemize}
The semi-classical limit is obtained by sending $\widehat{x}$ and $\widehat{y}$ to the commuting variables $x$ and $y$, while also sending $\h$ to 0. The wave function is a certain generating function constructed from the coefficients of the correlation differentials obtained from topological recursion. We define the wave function only in the case of double Hurwitz numbers and point the reader to the literature for more information on quantum curves~\cite{nor16}.

In the case of double Hurwitz numbers, the \emph{partition function} and \emph{wave function} are generating functions defined by the following formulas, respectively.
\begin{align*}
Z(p_1, p_2, \ldots; \h) &= \exp \bigg[ \sum_{g=0}^\infty \sum_{n=1}^\infty \sum_{\mu_1, \ldots, \mu_n=1}^\infty DH_{g,n}(\mu_1, \ldots, \mu_n) \frac{\h^{2g-2+n}}{n!} p_{\mu_1} \cdots p_{\mu_n} \bigg] \\
\psi(x, \h) &= \left. Z(p_1, p_2, \ldots; \h) \right|_{p_i=x^i}
\end{align*}

\begin{proposition}[Alexandrov--Lewanski--Shadrin \cite{ale-lew-sha}]
The wave function $\psi(x, \h)$ for the double Hurwitz numbers satisfies the quantum curve equation $\mathcal{Q} \, \psi(x,\hbar) = 0$, where
\[
\mathcal{Q} = \widehat{y} - \sum_{k=1}^d q_k \exp \big( \tfrac{1}{2} s\hbar k (k-1) \big) \widehat{x}^k \exp (sk \widehat{y}).
\]
\end{proposition}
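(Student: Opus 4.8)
The plan is to transcribe the operator equation $\mathcal{Q}\,\psi = 0$ into an explicit recursion for the Taylor coefficients of the wave function, and then to verify that recursion using a representation-theoretic formula for double Hurwitz numbers. First I would observe that $\y = \h x \frac{\partial}{\partial x}$ acts diagonally on monomials, with $\y\, x^N = \h N x^N$, so that $\exp(sk\y)$ is the dilation operator sending $f(x)$ to $f(e^{sk\h}x)$. Writing $\psi(x,\h) = \sum_{N \geq 0} \psi_N x^N$, which is legitimate since $\psi = 1 + O(x)$, the equation $\mathcal{Q}\,\psi = 0$ is equivalent, coefficient by coefficient in $x$, to
\[
\h N \psi_N = \sum_{k=1}^{\min(N,d)} q_k \exp\!\Big( s\h \big[ \tbinom{k}{2} + k(N-k) \big] \Big) \, \psi_{N-k}, \qquad N \geq 0.
\]

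The key step is to produce a closed form for $\psi_N$. For this I would invoke the Frobenius formula, writing double Hurwitz numbers as weighted enumerations of factorisations in the symmetric group as in equation~\eqref{eq:factorisation}. Summing the $m$ transposition insertions weighted by $\frac{s^m}{m!}$, each carrying a factor of $\h$, turns the count of simple branch points into the exponential of the content sum $\mathrm{cont}(\nu) = \sum_{\square \in \nu} c(\square)$, which is the eigenvalue of the class of transpositions in the centre of the group algebra. Tracking the remaining powers of $\h$ via the Riemann--Hurwitz relation $2g-2+n = m - \ell$, one obtains the disconnected partition function in the hypergeometric form
\[
Z = \sum_{\nu} e^{s\h\,\mathrm{cont}(\nu)} \, s_\nu\big( \tfrac{q_1}{\h}, \tfrac{q_2}{\h}, \ldots \big) \, s_\nu(p_1, p_2, \ldots),
\]
where $s_\nu(p_1, p_2, \ldots) = \sum_\mu z_\mu^{-1} \chi^\nu_\mu \, p_\mu$ records the ramification over infinity, and the first Schur factor, evaluated at power sums $p_j = q_j/\h$, records the $q$-weighted ramification over zero. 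Specialising $p_i = x^i$ makes the second Schur factor a one-variable evaluation; since $s_\nu(x) = 0$ unless $\nu$ is a single row $(N)$, in which case it equals $x^N$, only single-row partitions survive. Using $\chi^{(N)} \equiv 1$ and $\mathrm{cont}\big((N)\big) = \binom{N}{2}$, this collapses to
\[
\psi_N = e^{s\h \binom{N}{2}} \, h_N, \qquad h_N = [t^N]\exp\!\Big( \tfrac{1}{\h}\sum_{j=1}^{d} \tfrac{q_j}{j} t^j \Big),
\]
where $h_N$ is the complete homogeneous symmetric function specialised at $p_j = q_j/\h$.

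It then remains to check the recursion, which is now routine. Substituting $\psi_N = e^{s\h\binom{N}{2}} h_N$ and using the elementary identity $\binom{N}{2} = \binom{k}{2} + \binom{N-k}{2} + k(N-k)$, every exponential factor cancels and the claim reduces to $\h N h_N = \sum_{k=1}^{d} q_k h_{N-k}$. This is precisely the coefficient of $t^N$ in the identity $\h\, t H'(t) = P(t) H(t)$ satisfied by $H(t) = \sum_N h_N t^N = \exp\big( \tfrac1\h \sum_j \tfrac{q_j}{j} t^j \big)$, since $t \frac{\dd}{\dd t}\big( \tfrac1\h \sum_j \tfrac{q_j}{j}t^j \big) = \tfrac1\h P(t)$. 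This completes the argument.

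The main obstacle is the second step: pinning down the hypergeometric formula for $Z$ with the correct bookkeeping. The content-sum eigenvalue and the Schur-function expansion are standard, but one must carefully account for the power of $\h$ dictated by $2g-2+n = m - \ell$, the passage from connected free energies to the disconnected $Z$ via exponentiation, and the symmetrisation factors $\frac1{n!}$, $\frac1{|\mathrm{Aut}|}$ and $z_\mu$ implicit in Definition~\ref{def:dhurwitz}. Reassuringly, the factor $e^{\frac12 s\h k(k-1)}$ appearing in $\mathcal{Q}$ is exactly $e^{s\h\,\mathrm{cont}((k))}$, a useful consistency check that the normalisation is right; likewise the single-Hurwitz case $d=1$, where the recursion collapses to $\h N \psi_N = e^{s\h(N-1)}\psi_{N-1}$, can be verified directly as a first test. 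Once the formula for $Z$ is established, the single-row collapse and the final ordinary differential equation require no further input.
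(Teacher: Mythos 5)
Your proposal is correct: the coefficient extraction from $\mathcal{Q}\,\psi=0$, the single-row collapse of the Schur expansion under the principal specialisation $p_i=x^i$, the identity $\tbinom{N}{2}=\tbinom{k}{2}+\tbinom{N-k}{2}+k(N-k)$, and the final reduction to $\h N h_N=\sum_k q_k h_{N-k}$ all check out, and the $\h$-bookkeeping via $2g-2+n=m-\ell$ is handled correctly. The paper itself offers no proof of this proposition --- it is imported verbatim from Alexandrov--Lewanski--Shadrin --- and your argument is essentially the standard derivation used there (their semi-infinite wedge computation amounts to the same hypergeometric expansion $Z=\sum_\nu e^{s\h\,\mathrm{cont}(\nu)}s_\nu(q_j/\h)\,s_\nu(p)$), so there is no divergence of method to report.
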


One can immediately observe that the semi-classical limit of the operator $\mathcal Q$ is simply $y - P(x \exp(sy))$, for which we have the rational parametrisation of equation~\eqref{eq:scurve}. The existence of the quantum curve lends strong support to Conjecture~\ref{con:main}.

\begin{remark}
As previously mentioned, the quantum curve may be used to predict the spectral curve for a topological recursion. It is also often the case that the $(0,1)$ information of a problem may be used to predict the spectral curve for a topological recursion~\cite{dum-mul-saf-sor}. We remark here that the quantum curve may be used to pass directly to the $(0,1)$ information. In the following, we use the quantum curve to provide another proof of Proposition~\ref{prop:F01}, which states that $x\frac \partial {\partial x} F_{0,1} (x)= P(z)$.

We begin with the two observations
\[
\psi(x,\h) = \exp \bigg( \frac 1\hbar F_{0,1}(x) + O(1) \bigg) \qquad \text{and} \qquad \widehat{y} \psi(x,\h) = \bigg( x\frac \partial {\partial x} F_{0,1}(x) + O(\h)\bigg)\psi(x,\h),
\]
which follow directly from the definition of the wave function. Therefore, we have the equation
\[
\exp \bigg( \frac 12 s\h k(k-1) \bigg) \widehat{x}^k \exp ( sk\widehat{y} ) \psi(x,\h) = ( 1 + O(\h) )x^k \bigg(e^{skx\frac \partial {\partial x} F_{0,1}(x)} + O(\h)\bigg)\psi(x,\h).
\]
The vanishing of $\mathcal Q \, \psi(x,\h)$ implies the following equation on $x\frac \partial {\partial x} F_{0,1}(x)$.
\[
x\frac \partial {\partial x} F_{0,1}(x) - \sum_{k=1}^d q_k x^k e^{s k x\frac \partial {\partial x} F_{0,1}(x)} = 0
\]

Now use the substitution $x = z \exp \big(-sx\tfrac \partial{\partial x}F_{0,1}(x)\big)$, where $z$ is a new variable and $x\frac \partial{\partial x} F_{0,1}(x)$ in the exponent is considered as a function of $z$. Then the previous equation recovers Proposition~\ref{prop:F01}, namely
\[
x \frac \partial{\partial x} F_{0,1}(x) = \sum_{k = 1}^d q_k z^k.
\]
\end{remark}

\subsection{Low genus evidence} \label{subsec:lowgenus}

One can attempt to verify the main conjecture via direct computation in low genus. We include below the calculations in the case $(g,n) = (0,3)$ and $(1,1)$.

\subsubsection*{Calculation of $F_{0,3}$}

The cut-and-join recursion of Corollary~\ref{cor:cutjoin2} in the case $(g,n) = (0,3)$ reads
\[
\frac \partial{\partial s'}F_{0,3} = \sum_{\text{cyclic}} \frac{x_1x_2}{x_1 - x_2} \bigg[ \frac{\partial}{\partial x_1} F_{0,2}(x_1,x_3) - \frac \partial{\partial x_2} F_{0,2}(x_2,x_3) \bigg] + \sum_{\text{cyclic}} \bigg[ x_1\frac{\partial}{\partial x_1}F_{0,2}(x_1,x_2)\bigg] \bigg[ x_1 \frac{\partial}{\partial x_1} F_{0,2}(x_1,x_3)\bigg].
\]
Substituting the formula of Proposition~\ref{prop:F02} yields great simplification and we are ultimately left with
\[
\frac \partial{\partial s'}F_{0,3} = -1 + \sum_{\text{cyclic}} \frac{z_2z_3}{(z_1 - z_2)(z_1 - z_3)}\frac 1{(1- s'z_1P'(z_1))^2}.
\]
One can integrate this equation, using the initial conditions to deduce the constant of integration, in order to obtain
\begin{equation} \label{eq:F03cyclic}
F_{0,3}(x_1, x_2, x_3) = -s' + \sum_{\text{cyclic}} \frac{s'z_2z_3}{(z_1 - z_2)(z_1 - z_3)(1-s'z_1P'(z_1))}.
\end{equation}

From the previous equation, $F_{0,3}$ can be expressed as follows.
\[
F_{0,3}(x_1, x_2, x_3) = 
\frac {s' \det \begin{bmatrix} 
s'z_1P'(z_1) & s'z_2P'(z_2) & s'z_3P'(z_3) \\ 
z_1(1 - s'z_1P'(z_1)) & z_2(1 - s'z_2 P'(z_2)) & z_3(1 - s'z_3 P'(z_3)) \\
 z_1^2(1 - s'z_1P'(z_1)) & z_2^2(1 - s'z_2P'(z_2)) & z_3^2(1 - s'z_3P'(z_3))
\end{bmatrix} }{(1 - s'z_1P'(z_1))(1 - s'z_2P'(z_2))(1 - s'z_3P'(z_3))(z_1 - z_2)(z_2 - z_3)(z_3 - z_1)}
\]
Observe that the determinant in the numerator is divisible by $(z_1-z_2) (z_2-z_3) (z_3-z_1)$, so that $F_{0,3}(x_1, x_2, x_3)$ does not have poles along the diagonals $z_i - z_j$. Now consider the expression
\[
F_{0,3}(x_1, x_2, x_3) \prod_{i=1}^3 (1-s'z_iP'(z_i)) = \frac{\text{Pol}(z_1, z_2, z_3, s)}{(z_1-z_2) (z_2-z_3) (z_3-z_1)}.
\]
We see that the degree of $z_1$ in $\text{Pol}(z_1, z_2, z_3, s)$ is $d+2$. However, we have already deduced that $\text{Pol}(z_1, z_2, z_3)$ is divisible by $(z_1-z_2) (z_2-z_3) (z_3-z_1)$. It follows that the entire expression is in fact a polynomial of degree $d$ in $z_1$. Similarly, one can deduce that the entire expression has degree $d$ in $z_2$ and $z_3$. Now as the determinant in the numerator is divisible by $z_1z_2z_3$, there must exist $C_{i_1, i_2, i_3} \in \mathbb{Q}[s, q_1, q_2, \ldots]$ such that
\begin{equation}
F_{0,3}(x_1, x_2, x_3) = \prod_{i=1}^3 \frac{1}{1-s'z_iP'(z_i)} \times \sum_{i_1, i_2, i_3 =1}^d C_{i_1, i_2, i_3} z_1^{i_1} z_2^{i_2} z_3^{i_3}. \label{eq:F03}
\end{equation}

Note in particular that $F_{0,3}(x_1, x_2, x_3)$ is a rational function in $z_1, z_2, z_3$ with poles only at the branch points of the spectral curve of equation~\eqref{eq:scurve}. We may now check that the topological recursion recovers the correct correlation differential $\omega_{0,3}$ predicted by our main conjecture. We start with the following formula for $\omega_{0,3}$~\cite{eyn-ora07a}.
\begin{align}
\omega_{0,3}(z_1, z_2, z_3) &= - \sum_{i=1}^d \mathop{\text{Res}}_{z=a_i} \frac{\omega_{0,2}(z, z_1) \, \omega_{0,2}(z, z_2) \, \omega_{0,2}(z, z_3) \, x(z)}{\dd x(z) \, \dd y(z)} \nonumber \\
&= \dd z_1 \, \dd z_2 \, \dd z_3 \sum_{i=1}^d \frac{-sa_i^3}{(z_1-a_i)^2 \, (z_2-a_i)^2 \, (z_3-a_i)^2 \, (1 + s a_i^2 P''(a_i))} \label{eq:w03}
\end{align}
Observe that this equation decomposes $\omega_{0,3}(z_1, z_2, z_3)$ into a sum of its principal parts with respect to the variable $z_1$~\cite[Proposition 16]{do-lei-nor}. Recall that the principal part of a meromorphic form may be defined via the formula
\begin{equation} \label{eq:ppart}
\left[ \omega(z_1) \right]_a = \mathop{\text{Res}}_{z=a} \frac{\dd z_1}{z_1-z} \omega(z).
\end{equation}

Now we use this formula to evaluate the principal part of $\Omega_{0,3}(x_1, x_2, x_3) = \dd_1 \dd_2 \dd_3 F_{0,3}(x_1, x_2, x_3)$ at the branch point $a_i$.
\begin{align*}
\frac{\left[ \Omega_{0,3}(x_1, x_2, x_3) \right]_{a_i}}{\dd z_1 \, \dd z_2 \, \dd z_3} &= \frac{1}{\dd z_1 \, \dd z_2 \, \dd z_3} \mathop{\text{Res}}_{z=a_i} \frac{\dd z_1}{z_1-z} \dd \dd_2 \dd_3 F_{0,3}(x, x_2, x_3) \\
&= \frac{\partial}{\partial z_1} \frac{\partial}{\partial z_2} \frac{\partial}{\partial z_3} \mathop{\text{Res}}_{z=a_i} \frac{\dd z}{z_1-z} F_{0,3}(x, x_2, x_3) \\
&= \frac{\partial}{\partial z_1} \frac{\partial}{\partial z_2} \frac{\partial}{\partial z_3} \mathop{\text{Res}}_{z=a_i} \frac{\dd z}{z_1-z} \bigg[ \frac{s'z_2z_3}{(z - z_2)(z - z_3)(1-s'zP'(z))} \bigg]
\end{align*}

The first equality uses the definition of the principal part, the second exchanges the order of taking the residue and the derivative, while the third uses equation~\eqref{eq:F03cyclic}. Note that we have only kept the summand in equation~\eqref{eq:F03cyclic} that will contribute to the residue. We may now evaluate the residue and derivatives explicitly, noting that the expression has a simple pole at $a_i$.

\begin{align}
\frac{\left[ \Omega_{0,3}(x_1, x_2, x_3) \right]_{a_i}}{\dd z_1 \, \dd z_2 \, \dd z_3} &= \frac{\partial}{\partial z_1} \frac{\partial}{\partial z_2} \frac{\partial}{\partial z_3} \mathop{\text{lim}}_{z\to a_i} \frac{1}{z_1-z} \bigg[ \frac{s'z_2z_3 (z-a_i)}{(z - z_2)(z - z_3)(1-s'zP'(z))} \bigg] \nonumber \\
&= \frac{\partial}{\partial z_1} \frac{\partial}{\partial z_2} \frac{\partial}{\partial z_3} \frac{z_2z_3}{(z_1-a_i)(z_2-a_i)(z_3-a_i) (a_iP''(a_i) + P'(a_i))} \nonumber \\ 
&= - \frac{sa_i^3}{(z_1-a_i)^2 (z_2-a_i)^2 (z_3-a_i)^2 (1+sa_i^2P''(a_i))} \label{eq:F03final}
\end{align}

The first equality evaluates the residue via limit, the second is an application of L'H\^{o}pital's rule, while the third evaluates the derivatives explicitly and uses the fact that $1 - s a_i P'(a_i) = 0$.

Comparing equations~\eqref{eq:w03} and~\eqref{eq:F03final}, we see that the principal parts of $\omega_{0,3}$ and $\Omega_{0,3}$ match precisely at the branch points. Furthermore, we know that they are both rational multidifferentials with poles only at the branch points, so we may conclude that they are equal. It follows that Conjecture~\ref{con:main} holds in the case $(g,n) = (0,3)$.

\subsubsection*{Calculation of $F_{1,1}$}
The cut-and-join recursion of Corollary~\ref{cor:cutjoin2} in the case $(g,n) = (1,1)$ reads
\[
\bigg[ \frac \partial{\partial s} - P(z) x \frac{\partial}{\partial x} \bigg] F_{1,1}(x) = \frac 12 \bigg[ u_1 u_2 \frac{\partial^2}{\partial u_1 \partial u_2} F_{0,2}(u_1,u_2) \bigg]_{\substack{u_1=x \\ u_2=x}}.
\]
Passing to $(z,s')$ coordinates and substituting the explicit expression for $\frac{\partial^2}{\partial z_1 \partial z_2}F_{0,2}$ from Proposition~\ref{prop:F02}
yields
\[
\frac \partial {\partial s'} F_{1,1}(x) = \frac{1}{2} \bigg[ \frac{z_1}{1 - s' z_1 P'(z_1)}\frac{z_2}{1- s' z_2 P'(z_2)} \frac{(x(z_1) - x(z_2))^2 - (z_1 - z_2)^2 x'(z_1) x'(z_2)}{(z_1 - z_2)^2 \, (x(z_1) - x(z_2))^2}\bigg]_{z_1 = z_2 = z}.
\]

One can evaluate the right side of this equation using L'H\^opital's rule to obtain
\[
\frac \partial {\partial s'} F_{1,1}(x) = \frac{ z^2\left(3x''(z)^2 - 2x'(z)x'''(z)\right)}{24 \left(1-s'zP'(z)\right)^2x'(z)^2}.
\]

Now integrate this equation, using the initial conditions to deduce the constant of integration.
\[
F_{1,1}(x) = \frac{s'^2 z^2 \left( 3P'' + zP''' + 3s' P'^2 - s'P'^2 - s'zP'P'' + s'z^2P''^2 - s'z^2P'P''' - s'^2 z P'^3 \right)}{24 (1 - s'zP')^3}.
\]

One can explicitly verify that
\begin{equation}\label{eq:F11}
F_{1,1}(x) = \frac{s'^2}{24} \bigg( \frac z{1 - s'zP'(z)}\frac \partial{\partial z }\bigg)^2[zP'(z)] - \frac{s'^2}{24} \bigg( \frac z{1 - s'zP'(z)}\frac \partial{\partial z }\bigg) [P(z)].
\end{equation}

Note that $F_{1,1}(x)$ is a rational function in $z$ with poles only at the branch points of the spectral curve of equation~\eqref{eq:scurve}. Rather than verify that $\Omega_{1,1}(x) = \dd F_{1,1}(x)$ coincides with the correlation differential $\omega_{1,1}(z)$, we propose a more general approach in the next section. We will prove that our main conjecture follows if the free energies satisfy so-called linear loop equations.

\section{Towards a proof of the conjecture} \label{sec:proof}

\subsection{The structure of double Hurwitz numbers}

There is a certain technique for starting with a combinatorial recursion of cut-and-join type and deducing the topological recursion that works in several known cases. For instance, it was successfully employed in the case of single Hurwitz numbers~\cite{eyn-mul-saf}, orbifold Hurwitz numbers~\cite{do-lei-nor, bou-her-liu-mul} and monotone Hurwitz numbers~\cite{do-dye-mat}. One further ingredient usually enters into these proofs and that is a polynomial structure theorem for the enumerative problem. However, such a result is not known in the case of double Hurwitz numbers. In this section, we present a conjecture on the structure of double Hurwitz numbers and then proceed to show that it can be used to deduce our main conjecture.

It is known that the correlation differentials produced by the topological recursion satisfy so-called \emph{linear loop equations}~\cite{bor-eyn-ora}. These assert that at a branch point $a$ of the spectral curve with associated involution $\sigma$, the correlation differentials $\omega_{g,n}$ for $2g-2+n>0$ satisfy the following condition: the sum
\[
\omega_{g,n}(z_1, z_2, \ldots, z_n ) + \omega_{g,n}(\sigma(z_1), z_2, \ldots, z_n)
\]
is analytic at $z_1=a$. The linear loop equations imply structure to the underlying coefficients of the correlation differentials. For example, in the case of single Hurwitz numbers, they are equivalent to the fact that
\[
H_{g,n}(\mu_1, \ldots, \mu_n) = \prod_{i=1}^n \frac{\mu_i^{\mu_i}}{\mu_i!} P_{g,n}(\mu_1, \ldots, \mu_n),
\]
where $P_{g,n}$ is a symmetric polynomial, a fact that follows from the ELSV formula. The linear loop equations are equivalent to an analogous quasi-polynomial structure in the case of orbifold Hurwitz numbers. As previously mentioned, an analogous result is not known in the case of double Hurwitz numbers. However, given our main conjecture, it is natural to posit that the following is true.

\begin{conjecture} \label{con:poly}
For $2g-2+n>0$, 
\begin{itemize}
\item $F_{g,n}(z_1, \ldots, z_n)$ is a rational function with poles only at $z_i = a_j$ for $i = 1, 2, \ldots, n$ and $j = 1, 2, \ldots, d$; and
\item $F_{g,n}(z_1, z_2, \ldots, z_n) + F_{g,n}(\sigma_i(z_1), z_2, \ldots, z_n)$ is analytic at $z_1=a_i$ for $i = 1, 2, \ldots, d$. 
\end{itemize}
We will subsequently refer to these constraints as \emph{linear loop equations}.
\end{conjecture}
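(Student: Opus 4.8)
The plan is to establish both bullets together by induction on the negative Euler characteristic $2g-2+n$, with the cut-and-join recursion of Corollary~\ref{cor:cutjoin2} in its $(z,s')$ form serving as the engine. The base cases are already available, since the explicit expressions for $F_{0,3}$ and $F_{1,1}$ in equations~\eqref{eq:F03} and~\eqref{eq:F11} are manifestly rational with poles only along the branch locus, and one checks directly that their $\sigma_i$-symmetrisations are regular there. For the inductive step I would assume both bullets for every $F_{g',n'}$ of smaller complexity and analyse the right-hand side of the recursion for $\partial_{s'} F_{g,n}$.

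For the rationality bullet, the inductive hypothesis makes the entire right-hand side rational, and the explicit prefactors $\tfrac{z_i^2}{(1-s'z_iP'(z_i))^2}$ and $\tfrac{z_iz_j}{(z_i-z_j)(1-s'z_iP'(z_i))^2}$ introduce no poles beyond the branch locus $\{1-s'zP'(z)=0\}$ and the diagonals $z_i=z_j$; the diagonal contributions cancel by the symmetry of $F_{g,n}$ played against the antisymmetry of the off-diagonal term, exactly as witnessed in the computation of $F_{0,3}$. The delicate point is the integration in $s'$: because the branch points $a_j=a_j(s')$ migrate with $s'$, naive integration of a rational function whose poles move could in principle produce logarithms. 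I would control this by positing the ansatz that $F_{g,n}$ is a rational function of the $z_i$ divided by a product of powers of $1-s'z_iP'(z_i)$, substituting into the recursion so that the resulting system closes on the numerator, and using the vanishing initial condition at $s'=0$ to fix the constant of integration and exclude transcendental terms.

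The heart of the matter, and the step I expect to be hardest, is the loop-equation bullet. Here I would apply the local involution $\sigma_i$ to $z_1$ throughout the recursion and add, aiming to show that the symmetrised combination is analytic at $z_1=a_i$. The favourable structural fact is that in every ``quadratic'' term --- the $F_{g-1,n+1}$ contribution and the stable products $F_{g_1}F_{g_2}$ --- the variable $z_1$ enters only through $x_1=x(z_1)$, so these terms are $\sigma_i$-invariant and their contribution reduces to controlling their principal parts at the critical value $x_1=x(a_i)$. The obstruction is the off-diagonal term, which after rewriting equals $\tfrac{z_j}{(z_1-z_j)(1-s'z_1P'(z_1))}\,x_1\tfrac{\partial}{\partial x_1}F_{g,n-1}(\xx_{S\setminus\{j\}})$: the operator $x_1\tfrac{\partial}{\partial x_1}$ is $\sigma_i$-invariant, but the scalar prefactor carries a simple pole at $z_1=a_i$ and breaks the symmetry. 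Showing that the symmetrised combination is nonetheless regular at $a_i$ requires feeding in the inductive loop equation for $F_{g,n-1}$ together with the second-order expansion of $\sigma_i$ at $a_i$ --- the same data that defines the recursion kernel in equation~\eqref{eq:kernel} --- and one must moreover track the $s'$-dependence of $\sigma_i$ and $a_i$ when commuting the symmetrisation past $\partial_{s'}$.

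Finally, I would emphasise what makes the double case genuinely harder than the single, orbifold and monotone cases treated in the literature: there is no ELSV-type formula to supply the polynomial structure a priori, so it must be extracted from the recursion itself. The pruned viewpoint is the natural vehicle. By the second version of the pruning correspondence, $F_{g,n}=\sum PH_{g,n}(\nu_1,\ldots,\nu_n)\prod_i z_i^{\nu_i}$, so rationality with poles only at the $a_j$ is equivalent to a precise polynomial-exponential dependence of the pruned numbers $PH_{g,n}$ on the perimeters $\nu_i$, while the loop-equation bullet is equivalent to a symmetry of that dependence; this is exactly the content of the equivalent Conjecture~\ref{con:elsv}. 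The real obstacle is that the piecewise-polynomiality established by Goulden--Jackson--Vakil and Hahn carries wall-crossing behaviour, and assembling the pieces into a single global rational function with the prescribed branch-point poles is precisely what remains open.
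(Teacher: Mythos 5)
The first thing to say is that the statement you are asked to prove is, in the paper, an open conjecture: the authors do not prove Conjecture~\ref{con:poly}. They verify it only for $(g,n)=(0,3)$ and $(1,1)$, observe that it follows from Conjecture~\ref{con:main} by the general theory of topological recursion, and prove the converse implication in Theorem~\ref{thm:main}. So there is no proof in the paper to compare yours against, and any complete argument along your lines would in fact settle the paper's main conjecture.

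Your proposal is a sensible research plan, but it is not a proof, and the gaps sit exactly where the paper locates the open problem. Two steps are asserted rather than established. First, the $s'$-integration: you correctly note that the branch points $a_j(s')$ move with $s'$, so integrating a rational function with poles there can produce non-rational terms; your remedy is to ``posit the ansatz'' that $F_{g,n}$ has a specific denominator structure and hope that ``the resulting system closes on the numerator.'' Nothing in the proposal shows that the system closes, and closing it is the whole difficulty --- for $(0,3)$ and $(1,1)$ the authors do this by brute-force integration of an explicit right-hand side, which does not generalise. Second, the loop-equation bullet: you concede that the symmetrised off-diagonal term has a genuine simple pole at $z_1=a_i$ coming from the prefactor $1/(1-s'z_1P'(z_1))$, and that cancelling it ``requires feeding in the inductive loop equation for $F_{g,n-1}$ together with the second-order expansion of $\sigma_i$'' --- but you do not carry out this cancellation, and it is not a formality: the required cancellation is against contributions of the quadratic terms, whose principal parts are only controlled by the full topological recursion, i.e.\ by Conjecture~\ref{con:main} itself, which is what one is ultimately trying to prove. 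Your own closing paragraph concedes that assembling the piecewise-polynomial data of Goulden--Jackson--Vakil and Hahn into a single rational function with the prescribed poles ``is precisely what remains open.'' That concession is accurate, and it means the proposal should be read as a statement of strategy, not as a proof.
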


From the general theory of topological recursion, we know that Conjecture~\ref{con:main} implies Conjecture~\ref{con:poly}. In fact, we will prove in Theorem~\ref{thm:main} that the converse is also true. Therefore, the linear loop equations are equivalent to the statement that topological recursion governs the double Hurwitz numbers.

\begin{definition}
Define the $\mathbb{C}(s)$-vector space $V(z)$, whose elements are rational functions $p(z)$ such that
\begin{itemize}
\item $p(z)$ has poles only at the branch points $a_1, a_2, \ldots, a_d$; and
\item $p(z) + p(\sigma_i(z))$ is analytic at $z = a_i$ for $i = 1, 2, \ldots, d$.
\end{itemize}
\end{definition}

\begin{lemma} \label{lem:basis}
A basis for $V(z)$ is formed by $\phi_k^{i}(z)$ for $i = 1, 2, \ldots, d$ and $k = 0, 1, 2, \ldots$, where we define $\phi_{-1}^{i}(z) = z^i$ and
\[
\phi_{k+1}^{i} = \frac{z}{1-szP'(z)} \frac{\partial}{\partial z} \phi_k^{i}(z).
\]
\end{lemma}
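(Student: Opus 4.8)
The plan is to read the recursion operator as the single global derivation $D = \frac{z}{1-szP'(z)}\frac{\partial}{\partial z}$, which is precisely $x\frac{\partial}{\partial x}$ expressed in the coordinate $z$, so that $\phi_k^i = D^{k+1}(z^i)$ for all $k \geq -1$. Establishing the lemma then splits into three tasks: that each $\phi_k^i$ with $k \geq 0$ lies in $V(z)$, that the $\phi_k^i$ are linearly independent over $\mathbb{C}(s)$, and that they span. The organising principle for all three is the structural claim that $D$ maps $V(z)$ into itself; granting this, membership reduces to a single base case, while independence and spanning follow from a triangularity argument on pole orders.

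The crux, and the step I expect to be the main obstacle, is showing that $D$ preserves the anti-invariance condition at each branch point. Here I would work in a local coordinate $t$ near $a_i$ with $x = x(a_i) + t^2$, in which the involution is simply $\sigma_i : t \mapsto -t$ and $D = \frac{x}{2t}\frac{\partial}{\partial t}$. A direct check shows $D \circ \sigma_i^* = \sigma_i^* \circ D$, reflecting the fact that $D$ is built only from the $\sigma_i$-invariant function $x$. Consequently, if $p + p\circ\sigma_i$ is analytic at $a_i$, then so is $D(p + p\circ\sigma_i) = Dp + (Dp)\circ\sigma_i$, because applying $x\frac{\partial}{\partial x}$ to a function analytic in $x - x(a_i)$ returns such a function. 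That $D$ keeps poles confined to the branch points is immediate from its explicit form, and a short estimate as $z \to \infty$, using $\deg(1 - szP'(z)) = d$, shows $Dp$ stays regular at infinity. For the base case, $\phi_0^i = \frac{iz^i}{1-szP'(z)}$ has only simple poles at the $a_j$, and a simple pole is automatically anti-invariant since in the $t$-coordinate it is the odd Laurent tail $t^{-1}$; it is also regular at infinity, so $\phi_0^i \in V(z)$ and hence $\phi_k^i = D^k\phi_0^i \in V(z)$ for all $k \geq 0$.

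For independence and spanning I would track leading coefficients at the branch points. An induction using the explicit form of $D$ shows that $\phi_k^i$ has a pole of order exactly $2k+1$ at each $a_j$, and that the matrix of leading coefficients $\left(\,[\,(z-a_j)^{-(2k+1)}\,]\,\phi_k^i\,\right)_{i,j=1}^d$ equals the residue matrix of the $\phi_0^i$ up to a nonzero diagonal rescaling depending only on $j$ and $k$. That residue matrix is $\left(i\,a_j^{\,i}/Q'(a_j)\right)_{i,j}$ with $Q(z)=1-szP'(z)$, whose determinant reduces to a Vandermonde in the distinct, nonzero values $a_1, \ldots, a_d$ and is therefore nonzero. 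Invertibility at every order $k$ yields linear independence immediately: in a putative relation the top value of $k$ forces its coefficients to vanish, and one descends. The same invertibility drives spanning by downward induction on pole order: given $p \in V(z)$, subtract the combination of the $\phi_N^i$ matching the order-$(2N+1)$ parts at all $d$ branch points simultaneously, strictly lowering the pole order while staying in $V(z)$.

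Two points require care at the end of the spanning step. First, since the ground field is $\mathbb{C}(s)$ while the $a_j$ lie in an algebraic extension, I would either extend scalars to a splitting field and verify that the resulting coefficients are Galois-invariant, or detect the needed independence directly from the $z$-expansion at $z=0$, whose coefficients lie in $\mathbb{C}(s)$. Second, once all poles have been removed the remainder is regular at infinity with no finite poles, hence constant; because every $\phi_k^i$ vanishes at $z=0$ (the operator $D$ carries a factor of $z$), this residual constant is governed by the value of $p$ at $z=0$, and it is precisely here that the behaviour at $z=0$ and $z=\infty$ must be reconciled with the definition of $V(z)$ in order to conclude that the $\phi_k^i$ exhaust the space.
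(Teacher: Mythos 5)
The paper offers no proof of this lemma at all --- it simply defers to \cite[Lemma~14]{do-lei-nor} --- so there is nothing internal to compare against, and your argument must stand on its own. It essentially does. The key structural input, that $D = x\tfrac{\partial}{\partial x}$ commutes with $\sigma_i^*$ in a local coordinate $t$ with $x = x(a_i)+t^2$ and hence preserves the anti-invariance condition, is correct (note the paper records this closure property only \emph{afterwards}, in Remark~\ref{rem:commuting}, as a consequence of the lemma; you prove it directly and use it as the engine, which is the right order for a self-contained proof). The base case $\phi_0^i = iz^i/(1-szP'(z))$ having only simple, hence automatically anti-invariant, poles is correct, as is regularity at infinity since $i\le d = \deg(1-szP'(z))$. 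The pole-order bookkeeping is also sound: since $a_j\ne 0$ and the roots of $Q(z)=1-szP'(z)$ are simple, each application of $D$ raises the pole order at $a_j$ by exactly $2$ and multiplies the leading coefficient by a nonzero factor depending only on $j$ and the current order, so the order-$(2k+1)$ leading-coefficient matrix is the residue matrix $\bigl(ia_j^i/Q'(a_j)\bigr)$ up to nonzero diagonal scaling, with Vandermonde-type nonzero determinant. Independence and the peeling-off of principal parts (using that elements of $V(z)$ have odd pole order at each $a_j$) follow as you say, and your descent-of-scalars concern is handled by either of your proposed fixes.

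The one place you stop short is a genuine issue, but with the statement rather than with your method, and you should resolve it explicitly rather than leave it hanging. With $V(z)$ defined exactly as in the paper, the constant function $1$ belongs to $V(z)$: it has no poles and $1+1$ is analytic everywhere. Every $\phi_k^i$ vanishes at $z=0$ (each carries a factor of $z$, and $0$ is not a branch point), so the $\phi_k^i$ cannot span $V(z)$ as literally defined; your spanning argument correctly terminates at the residual constant, which equals $p(0)$. The lemma is therefore true only with the implicit normalisation $p(0)=0$ built into $V(z)$, or equivalently with the basis augmented by the constant function. This discrepancy is harmless for every use the paper makes of the lemma --- in Lemma~\ref{lem:omega} and the proof of Theorem~\ref{thm:main} only $\dd_1 F_{g,n}$ and principal parts at the $a_i$ enter, where constants are invisible --- but a complete proof should state the corrected version and note why the constant ambiguity does not propagate. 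With that one sentence added, your proof closes.
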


One can find a short proof of the lemma in the literature~\cite[Lemma 14]{do-lei-nor}. Given the fact that $F_{g,n}(z_1, \ldots, z_n)$ is symmetric in its arguments, it satisfies the linear loop equations if and only if
\[
F_{g,n}(z_1, \ldots, z_n) \in V(z_1) \otimes V(z_2) \otimes \cdots \otimes V(z_n).
\]

\begin{proposition}
The linear loop equations hold for $(g,n) = (0,3)$ and $(1,1)$.
\end{proposition}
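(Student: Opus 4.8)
The plan is to leverage the explicit closed forms for $F_{0,3}$ and $F_{1,1}$ already obtained in equations~\eqref{eq:F03} and~\eqref{eq:F11}, together with the reformulation recorded just above: since each free energy is symmetric in its arguments, it satisfies the linear loop equations precisely when it lies in $V(z_1) \otimes \cdots \otimes V(z_n)$. It therefore suffices to exhibit each of $F_{0,3}$ and $F_{1,1}$ as a $\mathbb{C}(s)$-linear combination of products of the basis elements $\phi_k^i$ supplied by Lemma~\ref{lem:basis}. The key preliminary observation I would record is that the operator appearing in that lemma is exactly the one governing our change of variables, namely $\frac{z}{1-szP'(z)} \frac{\partial}{\partial z} = x \frac{\partial}{\partial x}$, using $s = s'$. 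Consequently this operator sends $\phi_k^i$ to $\phi_{k+1}^i$ and hence preserves $V(z)$; moreover $z^i = \phi_{-1}^i \in V(z)$, and applying the operator once gives $\frac{z^i}{1-szP'(z)} = \tfrac{1}{i}\phi_0^i \in V(z)$ for each $i \geq 1$.

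For the $(0,3)$ case I would read off from equation~\eqref{eq:F03} that $F_{0,3}$ is a finite sum over $i_1, i_2, i_3 \in \{1, \ldots, d\}$ of the coefficients $C_{i_1,i_2,i_3}$ times a product $\prod_{k=1}^3 \frac{z_k^{i_k}}{1-s'z_kP'(z_k)}$. By the preceding observation each factor equals $\tfrac{1}{i_k}\phi_0^{i_k}(z_k) \in V(z_k)$, so each summand lies in $V(z_1) \otimes V(z_2) \otimes V(z_3)$, and hence so does $F_{0,3}$. The fact that the summation indices begin at $1$, which is precisely what licenses the division by $i_k$, is guaranteed by the divisibility of the determinant numerator by $z_1 z_2 z_3$ that was noted in the derivation of~\eqref{eq:F03}.

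For the $(1,1)$ case I would start from equation~\eqref{eq:F11}, which expresses $F_{1,1}$ as $\frac{s'^2}{24}$ times $D^2[zP'(z)] - D[P(z)]$, where $D = \frac{z}{1-szP'(z)}\frac{\partial}{\partial z}$. Writing $P(z) = \sum_{i=1}^d q_i \phi_{-1}^i$ and $zP'(z) = \sum_{i=1}^d i q_i \phi_{-1}^i$, repeated application of $D$ yields $D[P(z)] = \sum_i q_i \phi_0^i$ and $D^2[zP'(z)] = \sum_i i q_i \phi_1^i$, both of which are $\mathbb{C}(s)$-combinations of basis elements and therefore lie in $V(z)$. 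Multiplying by the scalar $\frac{s'^2}{24} \in \mathbb{C}(s)$ then shows $F_{1,1} \in V(z)$.

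In both cases the argument is a direct verification rather than a genuine obstacle; the only point requiring care is bookkeeping, namely confirming that the operator in Lemma~\ref{lem:basis} coincides with $x \frac{\partial}{\partial x}$ under the identification $s = s'$, that all coefficients genuinely reside in the scalar field $\mathbb{C}(s)$, and that the relevant index ranges exclude $i = 0$ so that the elementary building blocks are honestly members of $V(z)$.
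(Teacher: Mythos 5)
Your proposal is correct and follows essentially the same route as the paper: it identifies the operator of Lemma~\ref{lem:basis} with $x\frac{\partial}{\partial x}$, rewrites the factors $\frac{z_k^{i_k}}{1-s'z_kP'(z_k)}$ in~\eqref{eq:F03} as $\frac{1}{i_k}\phi_0^{i_k}(z_k)$, and expands~\eqref{eq:F11} as $\frac{s'^2}{24}\sum_i\left(iq_i\phi_1^i - q_i\phi_0^i\right)$, exactly matching the paper's two displayed computations. The only extra content is your (correct but unneeded) remark that $z^i\in V(z)$; everything essential coincides with the paper's argument.
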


\begin{proof}
From equation~\eqref{eq:F03}, we have
\[
F_{0,3}(x_1, x_2, x_3) = \sum_{i_1, i_2, i_3 =1}^d \frac{C_{i_1,i_2,i_3}}{i_1 i_2 i_3} \phi_0^{i_1}(z_1) \, \phi_0^{i_2}(z_2) \, \phi_0^{i_3}(z_3) \in V(z_1) \otimes V(z_2) \otimes V(z_3).
\]
From equation~\eqref{eq:F11}, we have
\[
F_{1,1}(x) = \frac{s^2}{24} \sum_{i=1}^d \left[ iq_i \phi_1^i(z) - q_i \phi_0^i(z) \right] \in V(z). \qedhere
\]
\end{proof}

\begin{remark} \label{rem:commuting}
By Lemma~\ref{lem:basis}, we know that $x \frac{\partial}{\partial x} = \frac{z}{1-s'zP'(z)} \frac{\partial}{\partial z}$ preserves $V(z)$ in the sense that for any $p(z) \in V(z)$, we also have $x \frac{\partial}{\partial x} p(z) \in V(z)$. Since $x$ and $s$ are commuting variables, we also know that $\frac{\partial}{\partial s} = \frac{\partial}{\partial s'} + \frac{zP(z)}{(1-s'zP'(z)} \frac{\partial}{\partial z}$ preserves $V(z)$. This seemingly trivial remark will prove to be useful later.
\end{remark}

The linear loop equations imply structure on the free energies and hence, also on the double Hurwitz numbers themselves. We have $F_{g,n}(z_1, \ldots, z_n) \in V(z_1) \otimes V(z_2) \otimes \cdots \otimes V(z_n)$, supposing that the linear loop equations hold. So there exist $C_{g,n} \big( \substack{i_1, \ldots, i_n \\ m_1, \ldots, m_n} \big)$ that depend on $s, q_1, q_2, \ldots$ for $1 \leq i_1, \ldots, i_n \leq d$ and $m_1, \ldots, m_n \geq 0$ such that the following is true.
\begin{align*}
F_{g,n}(z_1, \ldots, z_n) &= \sum_{i_1, \ldots, i_n = 1}^d \sum_{m_1, \ldots, m_n=0}^{\mathrm{finite}} C_{g,n} \big( \substack{i_1, \ldots, i_n \\ m_1, \ldots, m_n} \big) \prod_{k=1}^n \phi_{m_k}^{i_k}(z_k) \\
&= \sum_{i_1, \ldots, i_n = 1}^d \sum_{m_1, \ldots, m_n=0}^{\mathrm{finite}} C_{g,n} \big( \substack{i_1, \ldots, i_n \\ m_1, \ldots, m_n} \big) \prod_{k=1}^n \bigg( x_k \frac{\partial}{\partial x_k} \bigg)^{m_k+1} z_k^{i_k}
\end{align*}

Write $z^i = \sum_{\mu=0}^\infty A_\mu^i x^\mu$ and observe that one can deduce the coefficients $A_\mu^i$ via the Lagrange inversion formula. This then leads to the following conjectural expression for the double Hurwitz numbers, which is in fact equivalent to  Conjecture~\ref{con:poly}.

\begin{conjecture} \label{con:elsv}
For $2g-2+n>0$, there exist $C_{g,n} \big( \substack{i_1, \ldots, i_n \\ m_1, \ldots, m_n} \big)$ independent of $\mu_1, \ldots, \mu_n$ such that the double Hurwitz numbers may be expressed as
\[
DH_{g,n}(\mu_1, \ldots, \mu_n) = \sum_{i_1, \ldots, i_n = 1}^d \prod_{k=1}^n A_{\mu_k}^{i_k} \sum_{m_1, \ldots, m_n=0}^{\mathrm{finite}} C_{g,n} \big( \substack{i_1, \ldots, i_n \\ m_1, \ldots, m_n} \big) \prod_{k=1}^n \mu_k^{m_k+1},
\]
where
\[
A_{\mu}^i = i \sum_{|\lambda| = \mu-i} \frac{\mu^{\ell(\lambda)-1}}{|\mathrm{Aut}~\lambda|} q_{\lambda_1} q_{\lambda_2} \cdots q_{\lambda_{\ell(\lambda)}} s^{\ell(\lambda)}.
\]
Here, $\lambda$ represents an integer partition with $\ell(\lambda)$ parts and $\mathrm{Aut}~\lambda$ is the set of permutations of the tuple $(\lambda_1, \lambda_2, \ldots, \lambda_{\ell(\lambda)})$ that leave it invariant.
\end{conjecture}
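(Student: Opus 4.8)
The plan is to prove the claimed formula by extracting it from the free energies under the linear loop equations, and then to observe that the extraction is reversible, which simultaneously yields the advertised equivalence with Conjecture~\ref{con:poly}. Assuming the linear loop equations for $2g-2+n>0$, the symmetry of $F_{g,n}$ places it in $V(z_1)\otimes\cdots\otimes V(z_n)$, and Lemma~\ref{lem:basis} expands it in the basis $\{\phi_m^i\}$ as
\[
F_{g,n} = \sum_{i_1,\ldots,i_n=1}^d \sum_{m_1,\ldots,m_n=0}^{\mathrm{finite}} C_{g,n}\big(\substack{i_1,\ldots,i_n\\ m_1,\ldots,m_n}\big) \prod_{k=1}^n \phi_{m_k}^{i_k}(z_k),
\]
where the coefficients $C_{g,n}$ depend only on $s, q_1, q_2, \ldots$ and are by construction independent of the $\mu_k$. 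The finiteness of each inner sum is not an extra hypothesis but a consequence of $\{\phi_m^i\}$ being a genuine basis of $V(z)$, and this is exactly the point at which the rationality and involution symmetry encoded in the linear loop equations are consumed.

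Next I would transport this $z$-side identity to the $x$-side. The recursion defining the basis, together with $\phi_{-1}^i(z)=z^i$ and $x\frac{\partial}{\partial x} = \frac{z}{1-szP'(z)}\frac{\partial}{\partial z}$, gives $\phi_m^i = \big(x\frac{\partial}{\partial x}\big)^{m+1} z^i$. Writing $z^i = \sum_{\mu\ge 0} A_\mu^i x^\mu$ and applying $x\frac{\partial}{\partial x}$ repeatedly produces $\big(x\frac{\partial}{\partial x}\big)^{m+1} z^i = \sum_\mu \mu^{m+1} A_\mu^i x^\mu$. Since $DH_{g,n}(\mu_1,\ldots,\mu_n)$ is the coefficient of $\prod_k x_k^{\mu_k}$ in $F_{g,n}$ by equation~\eqref{eq:fenergies}, reading off this coefficient yields precisely
\[
DH_{g,n}(\mu_1,\ldots,\mu_n) = \sum_{i_1,\ldots,i_n=1}^d \prod_{k=1}^n A_{\mu_k}^{i_k} \sum_{m_1,\ldots,m_n=0}^{\mathrm{finite}} C_{g,n}\big(\substack{i_1,\ldots,i_n\\ m_1,\ldots,m_n}\big) \prod_{k=1}^n \mu_k^{m_k+1}.
\]
Conversely, re-summing any expression of this form over the $\mu_k$ reconstitutes the $\phi$-basis expansion above, which lies in $V(z_1)\otimes\cdots\otimes V(z_n)$; this returns the linear loop equations and establishes the equivalence of Conjectures~\ref{con:poly} and~\ref{con:elsv}.

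The substance of the argument is the explicit evaluation of $A_\mu^i = [x^\mu] z^i$. Because $x = z\exp(-sP(z))$ with $P(0)=0$, the Lagrange inversion formula applied to $H(z)=z^i$ and the series $\exp(sP(z))$ gives $A_\mu^i = \frac{i}{\mu}[z^{\mu-i}]\exp(\mu s P(z))$. Expanding $\exp(\mu sP(z)) = \sum_{\ell\ge 0}\frac{(\mu s)^\ell}{\ell!}P(z)^\ell$ and using $[z^{\mu-i}]P(z)^\ell = \sum_{j_1+\cdots+j_\ell=\mu-i} q_{j_1}\cdots q_{j_\ell}$, the one delicate step is the passage from ordered index tuples to partitions: a partition $\lambda$ of $\mu-i$ with $\ell(\lambda)$ parts is hit by exactly $\ell(\lambda)!/|\mathrm{Aut}\,\lambda|$ tuples, so the factorial cancels the $1/\ell!$ and leaves
\[
A_\mu^i = i\sum_{|\lambda|=\mu-i}\frac{\mu^{\ell(\lambda)-1}s^{\ell(\lambda)}}{|\mathrm{Aut}\,\lambda|}\, q_{\lambda_1}\cdots q_{\lambda_{\ell(\lambda)}},
\]
which is exactly the stated expression. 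The main obstacle is therefore combinatorial bookkeeping rather than anything conceptual: each of the three steps is routine once the linear loop equations are granted, and the sole external input is those equations, which this section reduces to Conjecture~\ref{con:poly} and hence to Conjecture~\ref{con:main}.
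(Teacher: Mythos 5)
Your proposal matches the paper's derivation essentially step for step: both deduce the formula from the linear loop equations (Conjecture~\ref{con:poly}) by expanding the symmetric function $F_{g,n}$ in the basis $\phi_m^i = \big(x\frac{\partial}{\partial x}\big)^{m+1} z^i$ of $V(z_1)\otimes\cdots\otimes V(z_n)$, passing to $x$-coordinates via $z^i = \sum_\mu A_\mu^i x^\mu$, and reading off coefficients, with the equivalence to Conjecture~\ref{con:poly} obtained by reversing the extraction. The only difference is that you carry out the Lagrange inversion computation of $A_\mu^i$ (including the tuples-to-partitions factor $\ell!/|\mathrm{Aut}\,\lambda|$) explicitly, which the paper leaves to the reader; that computation is correct.
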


\begin{remark}
The previous conjecture reduces to the polynomial structure of single Hurwitz numbers in the case $P(z) = z$ and the quasi-polynomial structure of orbifold Hurwitz numbers in the case $P(z) = z^a$. We remark that the proof of polynomiality for single Hurwitz numbers was originally achieved via the ELSV formula, a rather deep result from algebraic geometry~\cite{eke-lan-sha-vai}. A second proof arose via a thorough analysis of the infinite wedge space expression for single Hurwitz numbers~\cite{dun-kaz-ora-sha-spi}. In the context of double Hurwitz numbers, the previously mentioned conjecture of Goulden, Jackson and Vakil may play the role of the ELSV formula, although the underlying algebraic geometry is unclear at present. Thus, one may hope that the infinite wedge space technology may lead to progress the combinatorial structure of double Hurwitz numbers and thence towards the geometry of double Hurwitz numbers.
\end{remark}

\subsection{From cut-and-join recursion to topological recursion}

At the start of the section, we alluded to a certain technique for starting with a combinatorial recursion of cut-and-join type and deducing the topological recursion. We now carry this out in the context of double Hurwitz numbers, using the linear loop equations as an unproven assumption. In other words, the rest of the paper is dedicated to proving the following result.

\begin{theorem} \label{thm:main}
Conjecture~\ref{con:poly} implies Conjecture~\ref{con:main}.
\end{theorem}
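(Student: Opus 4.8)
The plan is to recognise that Conjecture~\ref{con:main} is equivalent to a set of abstract loop equations and to supply the one ingredient not already in hand. It is known from the loop-equation characterisation of the topological recursion~\cite{bor-eyn-ora} that a family of symmetric multidifferentials with the prescribed $\omega_{0,1}$ and $\omega_{0,2}$ coincides with the topological recursion output precisely when, for $2g-2+n>0$, each $\omega_{g,n}$ (i) has poles only at the branch points $a_1, \ldots, a_d$, (ii) satisfies the linear loop equations, and (iii) satisfies the quadratic loop equations, namely that for each $i$ the combination
\[
\Omega_{g-1,n+1}(z, \sigma_i(z), \zz_S) + \mathop{\sum_{g_1+g_2=g}}_{I \sqcup J = S}^{\circ} \Omega_{g_1,|I|+1}(z, \zz_I) \, \Omega_{g_2,|J|+1}(\sigma_i(z), \zz_J)
\]
is holomorphic at $z = a_i$ after division by $\dd x(z) \, \dd x(\sigma_i(z))$. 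For the multidifferentials $\Omega_{g,n}$ attached to the double Hurwitz numbers, the base data $\Omega_{0,1}$ and $\Omega_{0,2}$ are computed in Propositions~\ref{prop:F01} and~\ref{prop:F02}, while (i) and (ii) are exactly the two bullet points of Conjecture~\ref{con:poly}, which we are assuming. Thus the entire content of Theorem~\ref{thm:main} reduces to deriving the quadratic loop equations (iii) from the cut-and-join recursion.

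For the derivation I would read the quadratic loop equation off the cut-and-join recursion in the form of Corollary~\ref{cor:cutjoin2}. The crucial geometric observation is that the two sheets of $x$ near a branch point $a_i$ are interchanged by the local involution $\sigma_i$, so a point and its involution image share the same value of $x$; consequently the diagonal restriction $u_1 = u_2 = x_i$ appearing on the right-hand side of the recursion is, once pulled back to the $z$-coordinate, the statement about the colliding pair $(z, \sigma_i(z))$ that enters the quadratic loop equation. Under this dictionary the term $\tfrac{1}{2} u_1 u_2 \, \partial_{u_1} \partial_{u_2} F_{g-1,n+1}$ restricted to the diagonal matches $\Omega_{g-1,n+1}(z, \sigma_i(z), \,\cdot\,)$ divided by $\dd x \, \dd x$, and the stable products match the corresponding products in the quadratic combination, after applying the change of variables $x_i \frac{\partial}{\partial x_i} = \frac{z_i}{1 - s'z_i P'(z_i)} \frac{\partial}{\partial z_i}$. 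The two remaining pieces of the recursion --- the term $\frac{\partial}{\partial s'} F_{g,n}$ on the left and the mixing term $\sum_{i \neq j} \cdots F_{g,n-1}$ --- are, by the linear loop equations together with Remark~\ref{rem:commuting} (the operators $x \frac{\partial}{\partial x}$ and $\frac{\partial}{\partial s}$ preserve $V(z)$), built from elements of $V(z_1) \otimes \cdots \otimes V(z_n)$ and hence contribute nothing singular to the relevant combination at $z_1 = a_i$. Feeding this back shows that the quadratic combination has the required analyticity, which is precisely (iii); the rationality of the spectral curve then lets one match principal parts at the $a_i$ via the kernel~\eqref{eq:kernel} and the principal-part formula~\eqref{eq:ppart} to upgrade the loop equations to the equality $\Omega_{g,n} = \omega_{g,n}$.

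I expect the main obstacle to be the bookkeeping that converts the single diagonal evaluation $u_1 = u_2 = x_i$ into the full quadratic differential in $z$ near $a_i$. Concretely, the $x$-diagonal has four lifts to the two sheets, namely $(z,z)$, $(z, \sigma_i(z))$, $(\sigma_i(z), z)$ and $(\sigma_i(z), \sigma_i(z))$, and only the cross lifts feed the quadratic loop equation; isolating them requires the $\sigma_i$-symmetry supplied by the linear loop equations and careful control of the factors $(1 - s'z_i P'(z_i))^{-1}$ produced by the change of variables, together with the double pole introduced by the kernel. One must also check that the word ``stable'' exclusion of $F_{0,1}$ and $F_{0,2}$ terms in Corollary~\ref{cor:cutjoin2} lines up with the $\circ$-exclusion of $\omega_{0,1}$ in the topological recursion, and that the sign and polarisation conventions of the $\mathbb{C}^* \times \mathbb{C}$ spectral curve, in which $\omega_{0,1} = y \, \dd \log x$, are respected throughout --- these being exactly the conventions under which the loop-equation characterisation of the topological recursion remains valid.
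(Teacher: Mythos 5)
Your high-level strategy --- verify the abstract loop equations of the Borot--Eynard--Orantin type and invoke their characterisation of the topological recursion, with the projection property coming for free from rationality on a genus-zero spectral curve --- is essentially the same route the paper takes, just packaged as a citation rather than carried out by hand. However, there is a genuine error in the one step you identify as the remaining content. The quadratic loop equation is \emph{not} the statement that
\[
\Omega_{g-1,n+1}(z, \sigma_i(z), \zz_S) + \mathop{\sum_{g_1+g_2=g}}_{I \sqcup J = S}^{\circ} \Omega_{g_1,|I|+1}(z, \zz_I) \, \Omega_{g_2,|J|+1}(\sigma_i(z), \zz_J)
\]
is holomorphic at $z=a_i$ after division by $\dd x(z)\,\dd x(\sigma_i(z))$: the correct version sums over \emph{all} splittings, including the cross-terms $\omega_{0,1}(z)\,\Omega_{g,n}(\sigma_i(z),\zz_S) + \omega_{0,1}(\sigma_i(z))\,\Omega_{g,n}(z,\zz_S)$. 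The $\circ$-restricted combination you display is exactly the bracket in the recursion formula, and since $(\dd x)^2/\big(\omega_{0,1}(z)-\omega_{0,1}(\sigma_i(z))\big)$ is holomorphic and nonvanishing at $a_i$, holomorphicity of that bracket divided by $(\dd x)^2$ would force every residue in the recursion to vanish, i.e.\ $\omega_{g,n}\equiv 0$ for $2g-2+n>0$. (Test it on the Airy curve with $(g,n)=(1,1)$: $\omega_{0,2}(z,-z)/(\dd x)^2 = -1/(4z^4)$ has a fourth-order pole.)

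The corresponding error on the cut-and-join side is your claim that $\frac{\partial}{\partial s'}F_{g,n}$ lies in $V(z_1)\otimes\cdots\otimes V(z_n)$ and so contributes nothing singular. We have $\frac{\partial}{\partial s'} = \frac{\partial}{\partial s} - \sum_j P(z_j)\, x_j\frac{\partial}{\partial x_j}$; Remark~\ref{rem:commuting} handles $\frac{\partial}{\partial s}$ and the $j\neq 1$ terms, but multiplication by $P(z_1)=y_1$ does \emph{not} preserve $V(z_1)$, precisely because $y$ is not $\sigma_i$-invariant. This surviving term is the missing $\omega_{0,1}$ cross-term in disguise: as in Lemma~\ref{lem:omega}, $\mathrm{Sym}_i\big[\frac{\partial}{\partial s'}F_{g,n}\big]_{a_i} = \big[-(y_1-\sigma_i(y_1))\,x_1\frac{\partial}{\partial x_1}F_{g,n}\big]_{a_i}$, and dividing by $y_1-\sigma_i(y_1)$ (the kernel denominator) and summing over branch points is exactly what reconstructs $\Omega_{g,n}$ on the left-hand side of the recursion. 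Discarding it, as you propose, leaves nothing to equate with $\omega_{g,n}$; your two errors are mutually consistent and together would ``prove'' that all stable correlators vanish. Your dictionary between the diagonal evaluation $u_1=u_2=x_i$ and the antidiagonal pair $(z,\sigma_i(z))$ --- the ``four lifts'' point, resolved using the linear loop equations of the lower free energies --- is the right mechanism for the remaining terms and matches the paper's identity $\mathrm{Sym}_i\big[\sum_k f_k g_k\big]_{a_i} = -\mathrm{Sym}_i\big[\sum_k (f_k\circ\sigma_i)\, g_k\big]_{a_i}$, but the argument cannot close without restoring the $y_1\,x_1\frac{\partial}{\partial x_1}F_{g,n}$ contribution.
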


\begin{proof}[Proof of Theorem~\ref{thm:main}]
Consider the cut-and-join recursion of Corollary~\ref{cor:cutjoin2} for $2g-2+n > 1$. We will use the inductive hypothesis that $\Omega_{g',n'} = \omega_{g',n'}$ for $2g'-2+n' < 2g-2+n$. We will adopt a gentle abuse of notation and write $F_{g,n}(z_1, \ldots, z_n)$ for the rational function obtained by taking $F_{g,n}(x_1, \ldots, x_n)$ and changing coordinates to the $z$ variables. Recall that it is rational due to the assumption of the linear loop equations of Conjecture~\ref{con:poly}.
\begin{align*}
\frac{\partial}{\partial s'} F_{g,n}(z_1, \ldots, z_n) &= 
 \frac{1}{2} \sum_{i=1}^n \frac{z_i^2}{(1 - s'z_i P'(z_i))^2} \bigg[ \frac{\partial^2}{\partial u_1 \, \partial u_2} F_{g-1,n+1}(u_1, u_2, \zz_{S \setminus \{i\}}) \bigg]_{u_1=z_i, u_2=z_i} \\
 \\
&+ \frac{1}{2} \sum_{i=1}^n \mathop{\sum_{g_1+g_2=g}}_{I \sqcup J = S \setminus \{i\}}^{\mathrm{stable}} \frac{z_i^2}{(1 - s'z_i P'(z_i))^2}\bigg[ \frac{\partial}{\partial z_i} F_{g_1,|I|+1}(z_i, \zz_I) \bigg] \bigg[ \frac{\partial}{\partial z_i} F_{g_2,|J|+1}(z_i, \zz_J) \bigg] \\
&+ \sum_{i\ne j} \frac{z_i z_j}{(z_i-z_j) (1-s'z_iP'(z_i))^2} \frac{\partial}{\partial z_i} F_{g,n-1}(\zz_{S \setminus \{j\}})
\end{align*}

Pick a branch point $a_i$ of the spectral curve with associated local involution $\sigma_i$. The general strategy is to take the principal part of both sides at $z_1 = a_i$. As we have done previously, denote the principal part by the notation $[ ~\cdot~ ]_{a_i}$. We take the result, apply $\sigma_i$ to $z_1$ to both sides of the equation, then add it to the original. We denote this procedure by the notation $\mathrm{Sym}_i[ f(z_1) ] = f(z_1) + f(\sigma_i(z_1))$. Note that applying $\mathrm{Sym}_i[ ~\cdot~ ]_{a_i}$ kills all terms that satisfy the linear loop equations with respect to $z_1$. Thus, we obtain the following equation.
\begin{align*}
&\mathrm{Sym}_i  \bigg[ \frac{\partial}{\partial s'} F_{g,n}(z_1, \ldots, z_n) \bigg]_{a_i} = 
 \frac{1}{2} \, \mathrm{Sym}_i\left[ \frac{z_1^2}{(1 - s'z_1 P'(z_1))^2}\bigg[ \frac{\partial^2}{\partial u_1 \, \partial u_2} F_{g-1,n+1}(u_1, u_2, \zz_{S \setminus \{1\}}) \bigg]_{u_1=z_1, u_2=z_1} \right]_{a_i} \\
&\qquad + \frac{1}{2} \mathop{\sum_{g_1+g_2=g}}_{I \sqcup J = S \setminus \{1\}}^{\mathrm{stable}} \mathrm{Sym}_i\left[ \frac{z_1^2}{(1 - s'z_1 P'(z_1))^2}\bigg[ \frac{\partial}{\partial z_1} F_{g_1,|I|+1}(z_1, \zz_I) \bigg] \bigg[ \frac{\partial}{\partial z_1} F_{g_2,|J|+1}(z_1, \zz_J) \bigg] \right]_{a_i} \\
&\qquad + \sum_{j=2}^n \mathrm{Sym}_i\left[ \frac{z_1z_j}{z_1 - z_j} \bigg[ \frac 1{(1-z_1s'P'(z_1))^2} \frac{\partial}{\partial z_1} F_{g,n-1}(\zz_{S \setminus \{j\}}) - \frac 1{(1-z_js'P'(z_j))^2} \frac{\partial}{\partial z_j} F_{g,n-1}(\zz_{S \setminus \{1\}})\bigg] \right]_{a_i}
\end{align*}

Now if $f_i(z_1)$ and $g(z_1)$ satisfy the linear loop equation, then $f(z_1) + f(\sigma(z_1))$ and $g(z_1) + g(\sigma(z_1))$ are analytic at $a_i$, so their product is as well. So we have the following equations.
\[
 \sum_k [f_k(z_1) g_k(z_1) + f_k(\sigma(z_1)) g_k(z_1) + f_k(z_1) g_k(\sigma(z_1)) + f_k(\sigma(z_1)) g_k(\sigma(z_1))]_{a_i} = 0
\]
\[
\sum_k [f_k(z_1) g_k(z_1) + f_k(\sigma(z_1)) g_k(\sigma(z_1))]_{a_i} = - \sum_k [f_k(\sigma(z_1)) g_k(z_1) + f_k(z_1) g_k(\sigma(z_1))]_{a_i}
\]
\[
\mathrm{Sym}_i \Big[ \sum_k f_k(z_1) g_k(z_1)) \Big]_{a_i} = - \mathrm{Sym}_i \Big[ \sum_k f_k(\sigma(z_1)) g_k(z_1) \Big]_{a_i}
\]

Use this on the first and second lines of the right side, keeping in mind that if $f(z)$ satisfies the linear loop equation, then so does $x \frac{\partial}{\partial x} f(z)$. 
We also rewrite the third line in a more convenient form.
\begin{align*}
&\mathrm{Sym}_i\bigg[ \frac{\partial}{\partial s'} F_{g,n}(z_1, \ldots, z_n) \bigg]_{a_i} = -
 \frac{1}{2} \, \mathrm{Sym}_i\left[ x_1^2 \bigg[ \frac{\partial^2}{\partial u_1 \, \partial u_2} F_{g-1,n+1}(z(u_1), \sigma(z(u_2)), \zz_{S \setminus \{1\}}) \bigg]_{u_1=x_1, u_2=x_1} \right]_{a_i} \\
&\qquad - \frac{1}{2} \mathop{\sum_{g_1+g_2=g}}_{I \sqcup J = S \setminus \{1\}}^{\mathrm{stable}} \mathrm{Sym}_i\left[ x_1^2 \bigg[ \frac{\partial}{\partial x_1} F_{g_1,|I|+1}(z_1, \zz_I) \bigg] \bigg[ \frac{\partial}{\partial x_1} F_{g_2,|J|+1}(\sigma(z_1), \zz_J) \bigg] \right]_{a_i} \\
&\qquad  + \sum_{j=2}^n \mathrm{Sym}_i\Bigg[ \frac{z_j}{z_1 - z_j} \frac{x_1 \frac{\partial}{\partial x_1} F_{g,n-1}(\zz_{S \setminus \{j\}})}{1-z_1s'P'(z_1)} - \frac{z_1}{z_1-z_j} \frac{x_j\frac{\partial}{\partial x_j} F_{g,n-1}(\zz_{S \setminus \{1\}})}{1-z_js'P'(z_j)} \Bigg]_{a_i}
\end{align*}

Now apply the derivative operator $\dd_2 \cdots \dd_n$ to both sides. The third line transforms in the following way, due to Lemma~\ref{lem:w02}, whose statement and proof we postpone in order to declutter the present argument. Note that we do indeed require $\omega_{0,2}$ in the third line, which is not equal to $\Omega_{0,2}$.
\begin{align*}
&\dd_2 \cdots \dd_n~\mathrm{Sym}_i\bigg[ \frac{\partial}{\partial s'} F_{g,n}(z_1, \ldots, z_n) \bigg]_{a_i} = - \frac{1}{2} \, \mathrm{Sym}_i\bigg[ \frac{x_1^2}{(\dd x_1)^2} \Omega_{g-1,n+1}(z_1, \sigma(z_1), \zz_{S \setminus \{1\}}) \bigg]_{a_i} \\
&\qquad- \frac{1}{2} \mathop{\sum_{g_1+g_2=g}}_{I \sqcup J = S \setminus \{1\}}^{\mathrm{stable}} \mathrm{Sym}_i\bigg[ \frac{x_1^2}{(\dd x_1)^2} \Omega_{g_1,|I|+1}(z_1, \zz_I) \, \Omega_{g_2,|J|+1}(\sigma(z_1), \zz_J) \bigg]_{a_i} \\
&\qquad- \sum_{j=2}^n \mathrm{Sym}_i\bigg[ \frac{x_1^2}{(\dd x_1)^2} \omega_{0,2}(z_1, z_j) \, \Omega_{g,n-1}(\zz_{S \setminus \{j\}}) \bigg]_{a_i}
\end{align*}

Use the inductive hypothesis to replace the occurrences of $\Omega_{g',n'}$ on the right side with $\omega_{g',n'}$. Furthermore, gather together the second and third lines and use the symmetry of the situation to obtain the following.
\begin{align*}
&\dd_2 \cdots \dd_n~\mathrm{Sym}_i\bigg[ \frac{\partial}{\partial s'} F_{g,n}(z_1, \ldots, z_n) \bigg]_{a_i} \\
&\qquad= - \Bigg[ \frac{x_1^2}{(\dd x_1)^2} \Bigg[ \omega_{g-1,n+1}(z_1, \sigma(z_1), \zz_{S \setminus \{1\}}) + \mathop{\sum_{g_1+g_2=g}}_{I \sqcup J = S \setminus \{1\}}^\circ \omega_{g_1,|I|+1}(z_1, \zz_I) \, \omega_{g_2,|J|+1}(\sigma(z_1), \zz_J) \Bigg] \Bigg]_{a_i}
\end{align*}

Multiply both sides by $-\frac{\dd x_1}{x_1} \frac{1}{y_1 - \sigma(y_1)}$ and take the principal part at $a_i$ again. Since the expression we are multiplying by is analytic at $a_i$, this just moves the expression inside the principal part.
\begin{align*}
&-\left[ \frac{\dd x_1}{x_1} \frac{1}{(y_1 - \sigma(y_1))} \dd_2 \cdots \dd_n~\mathrm{Sym}_i\bigg[ \frac{\partial}{\partial s'} F_{g,n}(z_1, \ldots, z_n) \bigg] \right]_{a_i} \\
&\qquad= \Bigg[ \frac{x_1}{(y_1 - \sigma(y_1)) \, \dd x_1} \Bigg[ \omega_{g-1,n+1}(z_1, \sigma(z_1), \zz_{S \setminus \{1\}}) + \! \mathop{\sum_{g_1+g_2=g}}_{I \sqcup J = S \setminus \{1\}}^\circ \omega_{g_1,|I|+1}(z_1, \zz_I) \, \omega_{g_2,|J|+1}(\sigma(z_1), \zz_J) \Bigg] \Bigg]_{a_i}
\end{align*}

Now use equation~\eqref{eq:ppart} to evaluate the principal part and recall the definition of the recursion kernel from equation~\eqref{eq:kernel}.
\begin{align*}
&- \left[ \frac{\dd x_1}{x_1} \frac{1}{(y_1 - \sigma(y_1))} \dd_2 \cdots \dd_n~\mathrm{Sym}_i\bigg[ \frac{\partial}{\partial s'} F_{g,n}(z_1, \ldots, z_n) \bigg] \right]_{a_i} \\
&\qquad= \mathop{\text{Res}}_{z=a} K(z_1, z) \Bigg[ \omega_{g-1,n+1}(z, \sigma(z), \zz_{S \setminus \{1\}}) + \mathop{\sum_{g_1+g_2=g}}_{I \sqcup J = S \setminus \{1\}}^\circ \omega_{g_1,|I|+1}(z, \zz_I) \, \omega_{g_2,|J|+1}(\sigma(z), \zz_J) \Bigg]
\end{align*}

Finally, sum over all branch points.
\begin{align*}
&- \sum_{i=1}^d \left[ \frac{\dd x_1}{x_1} \frac{1}{(y_1 - \sigma_i(y_1))} \dd_2 \cdots \dd_n~\mathrm{Sym}_i\bigg[ \frac{\partial}{\partial s'} F_{g,n}(z_1, \ldots, z_n) \bigg] \right]_{a_i} \\
&\qquad= \sum_{i=1}^d \mathop{\text{Res}}_{z=a_i} K_i(z_1, z) \Bigg[ \omega_{g-1,n+1}(z, \sigma_i(z), \zz_{S \setminus \{1\}}) + \mathop{\sum_{g_1+g_2=g}}_{I \sqcup J = S \setminus \{1\}}^\circ \omega_{g_1,|I|+1}(z, \zz_I) \, \omega_{g_2,|J|+1}(\sigma_i(z), \zz_J) \Bigg]
\end{align*}

The left side is simply $\Omega_{g,n}(z_1, \ldots, z_n)$ by Lemma~\ref{lem:omega}, whose statement and proof we postpone in order to declutter the present argument. On the other hand, the right side is simply $\omega_{g,n}(z_1, \ldots, z_n)$ by the definition of the topological recursion. This completes the inductive argument.

All that is left to check are the base cases $\Omega_{0,3} = \omega_{0,3}$ and $\Omega_{1,1} = \omega_{1,1}$. The former statement appears in Subsection~\ref{subsec:lowgenus}. For the latter, we can track through the arguments of the current proof using $(g,n) = (1,1)$ to find that
\[
\left[ \Omega_{1,1}(z_1) \right]_{a_i} = \bigg[ \frac{1}{y_1 - \sigma_i(y_1)} \frac{x_1}{\dd x_1} \omega_{0,2}(z_1, \sigma_i(z_1) ]_{a_i} \bigg]_{a_i}
\]
Now sum over all branch points to obtain $\Omega_{1,1}$ on the left side and the expression for $\omega_{1,1}$ given by the topological recursion on the right side.
\end{proof}

The proof of Theorem~\ref{thm:main} required two lemmas, whose statements and proofs we presently discuss.

\begin{lemma} \label{lem:w02}
If $\mathcal{F}(z)$ satisfies the linear loop equations and $\Omega = \frac{\dd x_1}{x_1} \mathcal{F}(z)$, then
\[
\dd_2~\mathrm{Sym}_i\bigg[ \frac{z_2}{z_1 - z_2} \frac{\mathcal{F}(z_1)}{1-s'z_1P'(z_1)} - \frac{z_1}{z_1 - z_2} \frac{\mathcal{F}(z_2)}{1-s'z_2P'(z_2)} \bigg]_{a_i} = - \, \mathrm{Sym}_i \bigg[ \frac{x_1^2}{(\dd x_1)^2} \omega_{0,2}(z_1, z_2) \, \Omega(\sigma(z_1)) \bigg]_{a_i}.
\]
\end{lemma}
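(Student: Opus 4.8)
The plan is to prove the identity by understanding the left-hand side as an antisymmetrized principal part and matching it against the right-hand side, which involves $\omega_{0,2}$. I would begin by recalling the explicit formula for $\omega_{0,2}$ from Proposition~\ref{prop:F02}, namely $\omega_{0,2}(z_1,z_2) = \frac{\dd z_1 \, \dd z_2}{(z_1-z_2)^2}$, since this is the genuine Bergman kernel on the rational spectral curve rather than $\Omega_{0,2}$. The first concrete step is to apply the exterior derivative $\dd_2$ to the bracketed expression on the left, producing a one-form in $z_2$ whose coefficient I can compute explicitly. Since the whole expression is being acted on by $\mathrm{Sym}_i[\,\cdot\,]_{a_i}$, I only need to track the part of the $z_1$-dependence that fails the linear loop equation at $a_i$; the analytic part is annihilated by $\mathrm{Sym}_i[\,\cdot\,]_{a_i}$.

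**Key steps in order.**

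First I would expand $\frac{z_2}{z_1-z_2}\frac{\mathcal{F}(z_1)}{1-s'z_1 P'(z_1)}$ and $\frac{z_1}{z_1-z_2}\frac{\mathcal{F}(z_2)}{1-s'z_2P'(z_2)}$ separately, noting that the second term, viewed as a function of $z_1$, has its only $z_1$-dependence through the prefactor $\frac{z_1}{z_1-z_2}$, which is analytic at $z_1=a_i$ whenever $z_2$ avoids the branch point — so this term should contribute only through its pole at $z_1=z_2$ after we examine the residue structure. The main computation is to differentiate in $z_2$, then recognize the emerging factor $\frac{1}{(z_1-z_2)^2}$, which is precisely the coefficient of $\omega_{0,2}(z_1,z_2)/(\dd z_1\,\dd z_2)$. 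I would then use the relation $\frac{z_1}{1-s'z_1P'(z_1)}\frac{\partial}{\partial z_1} = x_1\frac{\partial}{\partial x_1}$ from Remark~\ref{rem:commuting} to convert the $z$-derivatives into the $x$-derivative appearing via $\dd x_1$ on the right-hand side, and invoke the definition $\Omega = \frac{\dd x_1}{x_1}\mathcal{F}(z)$ to identify $\mathcal{F}(\sigma(z_1))$ with $\frac{x_1}{\dd x_1}\Omega(\sigma(z_1))$.

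**Matching under the antisymmetrization.**

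The crux is the identity, established in the proof above just before this lemma, that $\mathrm{Sym}_i[\sum_k f_k(z_1)g_k(z_1)]_{a_i} = -\mathrm{Sym}_i[\sum_k f_k(\sigma(z_1))g_k(z_1)]_{a_i}$ whenever the factors satisfy the linear loop equations. Applying this lets me trade $\mathcal{F}(z_1)$ (which satisfies the linear loop equation, and whose product with the $\omega_{0,2}$-factor is what we want) for $-\mathcal{F}(\sigma(z_1))$, thereby producing the $\Omega(\sigma(z_1))$ on the right-hand side with the correct sign. The factor $\frac{x_1^2}{(\dd x_1)^2}$ on the right is exactly what accumulates from converting the two $\frac{z_1}{1-s'z_1P'(z_1)}$ factors (one from each $\mathcal{F}$-prefactor and the kernel) into $x_1\frac{\partial}{\partial x_1}$ and recognizing the $(\dd x_1)^{-1}$ normalization inherited from $\Omega$.

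**Expected main obstacle.**

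I expect the principal difficulty to be bookkeeping the contribution of the pole at the diagonal $z_1=z_2$ versus the pole at the branch point $z_1=a_i$. The expression $\frac{z_2}{z_1-z_2}\mathcal{F}(z_1)$ has a pole at $z_1=z_2$ coming from the prefactor and poles at $z_1=a_i$ coming from $\mathcal{F}$; after differentiating in $z_2$ and taking $[\,\cdot\,]_{a_i}$, I must verify that the diagonal pole does not spuriously contribute to the principal part at $a_i$, and conversely that the $\mathrm{Sym}_i$ operation correctly cancels the analytic-in-$z_1$ remainder. Keeping careful track of which factors are analytic at $a_i$ — so that $\mathrm{Sym}_i[\,\cdot\,]_{a_i}$ genuinely annihilates them — while ensuring the $\frac{1}{(z_1-z_2)^2}$ factor assembling into $\omega_{0,2}$ survives, is the delicate part; everything else is a direct substitution using Remark~\ref{rem:commuting} and the antisymmetrization identity.
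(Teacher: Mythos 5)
Your plan is correct and follows essentially the same route as the paper: discard the term that is analytic in $z_1$ at $a_i$, differentiate in $z_2$ to produce the $(z_1-z_2)^{-2}$ kernel, use the reflection identity $\mathrm{Sym}_i[fg]_{a_i}=-\mathrm{Sym}_i[f(\sigma(\,\cdot\,))g]_{a_i}$ to introduce $\mathcal{F}(\sigma(z_1))$ with the minus sign, and convert the Jacobian factors into $\frac{x_1^2}{(\dd x_1)^2}\,\omega_{0,2}(z_1,z_2)\,\Omega(\sigma(z_1))$. The one detail to nail down, which the paper checks explicitly, is that the companion factor $\mathcal{G}(z)=\frac{z}{z-z_2}\frac{1}{1-s'zP'(z)}$ also has analytic symmetrization at $a_i$ despite its simple pole there, so that the reflection identity genuinely applies.
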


\begin{proof}
The proof is by direct calculation.
\begin{align*}
&\dd_2~\mathrm{Sym}_i\bigg[ \frac{z_2}{z_1 - z_2} \frac{\mathcal{F}(z_1)}{1-s'z_1P'(z_1)} - \frac{z_1}{z_1 - z_2} \frac{\mathcal{F}(z_2)}{1-s'z_2P'(z_2)} \bigg]_{a_i} \\
={}&\dd_2~\mathrm{Sym}_i\bigg[ \frac{z_1}{z_1 - z_2} \frac{\mathcal{F}(z_1)}{1-s'z_1P'(z_1)} - \frac{\mathcal{F}(z_1)}{1-s'z_1P'(z_1)} - \frac{z_2}{z_1 - z_2} \frac{\mathcal{F}(z_2)}{1-s'z_2P'(z_2)} - \frac{\mathcal{F}(z_2)}{1-s'z_2P'(z_2)} \bigg]_{a_i} \\
={}&\dd_2~\mathrm{Sym}_i \bigg[ \frac{z_1}{z_1 - z_2} \frac{\mathcal{F}(z_1)}{1-s'z_1P'(z_1)}  \bigg]_{a_i} \\
={}&- \dd_2~\mathrm{Sym}_i \bigg[ \frac{z_1}{z_1 - z_2} \frac{\mathcal{F}(\sigma(z_1))}{1-s'z_1P'(z_1)}  \bigg]_{a_i} \\
={}&- \mathrm{Sym}_i \bigg[ \frac{\dd z_1 \, \dd z_2}{(z_1 - z_2)^2} \frac{x_1^2}{(\dd x_1)^2} \Omega(\sigma(z_1)) \bigg]_{a_i} \\
={}&- \mathrm{Sym}_i \bigg[ \omega_{0,2}(z_1, z_2) \frac{x_1^2}{(\dd x_1)^2} \Omega(\sigma(z_1)) \bigg]_{a_i}
\end{align*}
The first equality is a straightforward algebraic manipulation. The second equality uses the fact that the second term is annihilated by $\dd_2$ as well as the fact that the third and fourth terms have no pole at $z_1=a_i$. To obtain the third equality, write $\mathcal{G}(z) = \frac{z}{z-z_2} \frac{1}{1-s'zP'(z)}$ and observe that we have the relation $[ (\mathcal{F}(z_1)+\mathcal{F}(\sigma(z_1)) (\mathcal{G}(z_1)+\mathcal{G}(\sigma(z_1)) ]_{a_i} = 0$, since both parentheses are analytic at $z_1=a_i$. The fourth equality evaluates the derivative and uses $\frac{z_1}{1-s'z_1P'(z_1)} = \frac{\dd z_1}{\dd x_1}$. Finally, the fifth inequality simply substitutes the definition of $\omega_{0,2}$.
\end{proof}

\begin{lemma} \label{lem:omega}
Assuming the linear loop equations of Conjecture~\ref{con:poly}, the following is true for $2g-2+n>0$.
\[
\Omega_{g,n}(z_1, \ldots, z_n) = - \dd_2 \cdots \dd_n \sum_{i=1}^d \left[ \frac{\dd x_1}{x_1} \frac{1}{(y_1 - \sigma_i(y_1))} \mathrm{Sym}_i\bigg[ \frac{\partial}{\partial s'} F_{g,n}(z_1, \ldots, z_n) \bigg] \right]_{a_i}
\]
\end{lemma}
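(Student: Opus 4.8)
The plan is to peel off the operator $\dd_2\cdots\dd_n$, which commutes with every operation performed in the variable $z_1$, and thereby reduce the lemma to the single-variable identity
\[
\dd_1 F_{g,n}(z_1,\ldots,z_n) = -\sum_{i=1}^d \left[ \frac{\dd x_1}{x_1}\frac{1}{y_1 - \sigma_i(y_1)}\,\mathrm{Sym}_i\!\left[\frac{\partial}{\partial s'}F_{g,n}\right]\right]_{a_i},
\]
read as an equality of rational differentials in $z_1$ with $z_2,\ldots,z_n$ as parameters; applying $\dd_2\cdots\dd_n$ to it then yields $\Omega_{g,n}=\dd_1\cdots\dd_n F_{g,n}$. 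First I would record the two elementary facts that drive the argument. Since $x_1=z_1\exp(-s'P(z_1))$, one has $\frac{\dd x_1}{x_1}=\frac{1-s'z_1P'(z_1)}{z_1}\,\dd z_1$, which vanishes to first order at each branch point $a_i$, where $1-s'a_iP'(a_i)=0$. Since $x(\sigma_i(z_1))=x(z_1)$ and $y=P$, one has $y_1-\sigma_i(y_1)=P(z_1)-P(\sigma_i(z_1))$, which has a simple zero at $a_i$. Hence the prefactor $\frac{\dd x_1}{x_1}\frac{1}{y_1-\sigma_i(y_1)}$ is \emph{analytic} at $a_i$; this cancellation is the crux of the local computation.

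Next I would rewrite the $s'$-derivative so that it splits into pieces lying in $V(z_1)$. From the change of variables \eqref{eq:variables} one has $\frac{\partial}{\partial s'}=\frac{\partial}{\partial s}-\sum_j P(z_j)\,x_j\frac{\partial}{\partial x_j}$, so that
\[
\frac{\partial}{\partial s'}F_{g,n} = \Big(\tfrac{\partial}{\partial s}F_{g,n} - \textstyle\sum_{j\neq 1}P(z_j)x_j\tfrac{\partial}{\partial x_j}F_{g,n}\Big) - P(z_1)\Big(x_1\tfrac{\partial}{\partial x_1}F_{g,n}\Big) =: r(z_1) - P(z_1)\,q(z_1).
\]
By Lemma~\ref{lem:basis} and Remark~\ref{rem:commuting}, both $r$ and $q=x_1\frac{\partial}{\partial x_1}F_{g,n}$ lie in $V(z_1)$ as functions of $z_1$, so $\mathrm{Sym}_i[r]$ and $\mathrm{Sym}_i[q]$ are analytic at $a_i$. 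The only way membership in $V(z_1)$ can fail is through the factor $P(z_1)$, and writing $\mathrm{Sym}_i[Pq]=(P(z_1)-P(\sigma_i(z_1)))q(z_1)+P(\sigma_i(z_1))\,\mathrm{Sym}_i[q]$ isolates this: the last summand is analytic at $a_i$, leaving $\mathrm{Sym}_i[\partial_{s'}F_{g,n}]=(\text{analytic at }a_i)-(P(z_1)-P(\sigma_i(z_1)))\,q(z_1)$.

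Multiplying by the analytic prefactor from the first step and using $\frac{\dd x_1}{x_1}\frac{P(z_1)-P(\sigma_i(z_1))}{y_1-\sigma_i(y_1)}=\frac{\dd x_1}{x_1}$ together with $\frac{\dd x_1}{x_1}\,x_1\frac{\partial}{\partial x_1}F_{g,n}=\dd_1 F_{g,n}$, the analytic contributions are killed by $[\,\cdot\,]_{a_i}$, and I obtain the clean identity $[\,\cdots\,]_{a_i}=-[\dd_1 F_{g,n}]_{a_i}$ at each branch point. Negating and summing reduces everything to the reconstruction statement $\dd_1 F_{g,n}=\sum_{i=1}^d[\dd_1 F_{g,n}]_{a_i}$.

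It then remains to prove this reconstruction. By the linear loop equations, $F_{g,n}$ is rational in $z_1$ with poles confined to $a_1,\ldots,a_d$, so $\dd_1 F_{g,n}-\sum_i[\dd_1 F_{g,n}]_{a_i}$ is a rational differential holomorphic on all of $\mathbb{C}$, hence of the form $\mathrm{Pol}(z_1)\,\dd z_1$; this polynomial part vanishes precisely when $F_{g,n}$ is bounded as $z_1\to\infty$. This is the step where I expect the only genuine care to be required, and I would settle it using the explicit basis of Lemma~\ref{lem:basis}: each $\phi_k^i=\big(\tfrac{z}{1-szP'(z)}\tfrac{\partial}{\partial z}\big)^{k+1}z^i$ with $1\le i\le d$ is bounded as $z\to\infty$, since $\phi_0^i\sim z^{i-d}$ and each further application of $\tfrac{z}{1-szP'(z)}\tfrac{\partial}{\partial z}$ lowers the degree by $d$. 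Thus every element of $V(z_1)$ is bounded at infinity, $\mathrm{Pol}\equiv 0$, and the reconstruction holds. Combining the local identity with the reconstruction gives the single-variable statement, and applying $\dd_2\cdots\dd_n$ completes the proof. The main obstacle is therefore not the local cancellation — which is forced by the exact matching of $y_1-\sigma_i(y_1)$ with $P(z_1)-P(\sigma_i(z_1))$ — but the global reassembly of principal parts, where one must rule out a spurious contribution at $z_1=\infty$.
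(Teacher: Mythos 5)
Your proof is correct and follows essentially the same route as the paper: isolate the non-$V(z_1)$ part $-P(z_1)\,x_1\frac{\partial}{\partial x_1}F_{g,n}$ of $\frac{\partial}{\partial s'}F_{g,n}$ using Remark~\ref{rem:commuting}, use the symmetrization to replace $P(z_1)$ by $P(z_1)-P(\sigma_i(z_1))$, cancel this against the analytic prefactor $\frac{\dd x_1}{x_1}\frac{1}{y_1-\sigma_i(y_1)}$, and reassemble $\dd_1 F_{g,n}$ from its principal parts. The only difference is that you explicitly rule out a residual polynomial contribution at $z_1=\infty$ via the decay of the basis elements of Lemma~\ref{lem:basis}, a point the paper passes over with the bare assertion that a rational meromorphic form is the sum of its principal parts.
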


\begin{proof}
Given that $F_{g,n}(z_1, \ldots, z_n) \in V(z_1) \otimes V(z_2) \otimes \cdots \otimes V(z_n)$, we deduce that
\begin{align*}
\mathrm{Sym}_i \bigg[ \frac{\partial}{\partial s'} F_{g,n}(z_1, \ldots, z_n) \bigg]_{a_i} &= \mathrm{Sym}_i \bigg[ - y_1 x_1 \frac{\partial}{\partial x_1} F_{g,n}(z_1, \ldots, z_n) \bigg]_{a_i} \\
&= \left[ -(y_1 - \sigma_i(y_1)) x_1 \frac{\partial}{\partial x_1} F_{g,n}(z_1, \ldots, z_n) \right]_{a_i} \qquad \text{for all } i = 1, 2, \ldots, d.
\end{align*}
The first equality holds due to the inclusion $\frac \partial{\partial s} V(z_1) \subset V(z_1)$ discussed in Remark~\ref{rem:commuting} and the second since $x_1 \frac{\partial}{\partial x_1} F_{g,n}(z_1, \ldots, z_n) \in V(z_1) \otimes V(z_2) \otimes \cdots \otimes V(z_n)$. Multiply both sides by $-\frac{\dd x_1}{x_1} \frac{1}{y_1 - \sigma_i(y_1)}$, which is analytic at $z_1 = a_i$, and take the principal part of both sides again to obtain
\[
\left[ \dd_1 F_{g,n}(z_1, \ldots, z_n) \right]_{a_i} = \left[ -\frac{\dd x_1}{x_1} \frac{1}{(y_1 - \sigma_i(y_1))} ~\mathrm{Sym}_i\bigg[ \frac{\partial}{\partial s'} F_{g,n}(z_1, \ldots, z_n) \bigg] \right]_{a_i} \qquad \text{for all } i = 1, 2, \ldots, d.
\]

Since a rational meromorphic form is the sum of its principal parts, summing over $i = 1, 2, \ldots, d$ yields
\[
\dd_1 F_{g,n}(z_1, \ldots, z_n) = - \sum_{i=1}^d \left[ \frac{\dd x_1}{x_1} \frac{1}{(y_1 - \sigma_i(y_1))} ~\mathrm{Sym}_i\left[ \frac{\partial}{\partial s'} F_{g,n}(z_1, \ldots, z_n) \right] \right]_{a_i}.
\]
The desired result then follows by applying $\dd_2 \cdots \dd_n$ to both sides.
\end{proof}

\newpage

\appendix

\section{Table of double Hurwitz numbers} \label{sec:data}
\enlargethispage{4.5pt}

\begin{center}
\begin{tabular}{ccl} \toprule
$g$ & $(\mu_1, \ldots, \mu_n)$ & $DH_{g,n}(\mu_1, \ldots, \mu_n)$ evaluated at $s=1$ \\ \midrule
0 & $(1)$ & $q_1$ \\
0 & $(2)$ & $\frac{1}{2} q_2 + \frac{1}{2} q_1^2$ \\
0 & $(3)$ & $\frac{1}{3} q_3 + q_2 q_1 + \frac{1}{2} q_1^3$ \\
0 & $(4)$ & $\frac{1}{4} q_4 + q_3 q_1 + \frac{1}{2} q_2^2 + 2 q_2 q_1^2 + \frac{2}{3} q_1^4$ \\
0 & $(5)$ & $\frac{1}{5} q_5 + q_4q_1 + q_3q_2 + \frac{5}{2} q_3 q_1^2 + \frac{5}{2} q_2^2 q_1 + \frac{25}{6} q_2 q_1^3 + \frac{25}{24} q_1^5$ \\ \midrule
0 & $(11)$ & $q_2 + \frac{1}{2} q_1^2$ \\
0 & $(21)$ & $q_3 + 2 q_2 q_1 + \frac{2}{3} q_1^3$ \\
0 & $(31)$ & $q_4 + 3 q_3 q_1 + \frac{3}{2} q_2^2 + \frac{9}{2} q_2 q_1^2 + \frac{9}{8} q_1^4$ \\
0 & $(22)$ & $q_4 + 3 q_3 q_1 + q_2^2 + 4 q_2 q_1^2 + q_1^4$ \\
0 & $(41)$ & $q_5 + 4 q_4 q_1 + 4 q_3 q_2 + 8 q_3 q_1^2 + 8 q_2^2 q_1 + \frac{32}{3} q_2 q_1^3 + \frac{32}{15} q_1^5$ \\
0 & $(32)$ & $q_5 + 4 q_4 q_1 + 3 q_3 q_2 + \frac{15}{2} q_3 q_1^2 + 6 q_2^2 q_1 + 9 q_2 q_1^3 + \frac{9}{5} q_1^5$ \\ \midrule
0 & $(111)$ & $3 q_3 + 4 q_2 q_1 + q_1^3$ \\
0 & $(211)$ & $4 q_4 + 9 q_3 q_1 + 4 q_2^2 + 10 q_2 q_1^2 + 2 q_1^4$ \\
0 & $(311)$ & $5 q_5 + 16 q_4 q_1 + 15 q_3 q_2 + \frac{51}{2} q_3 q_1^2 + 24 q_2^2 q_1 + 27 q_2 q_1^3 + \frac{9}{2} q_1^5$ \\
0 & $(221)$ & $5 q_5 + 16 q_4 q_1 + 12 q_3 q_2 + 24 q_3 q_1^2 + 20 q_2^2 q_1 + 24 q_2 q_1^3 + 4 q_1^5$ \\ \midrule
0 & $(1111)$ & $16 q_4 + 27 q_3 q_1 + 12 q_2^2 + 24 q_2 q_1^2 + 4 q_1^4$ \\
0 & $(2111)$ & $25 q_5 + 64 q_4 q_1 + 54 q_3 q_2 + 81 q_3 q_1^2 + 72 q_2^2 q_1 + 70 q_2 q_1^3 + 10 q_1^5$ \\ \midrule
1 & $(2)$ & $\frac{1}{4} q_2 + \frac{1}{12} q_1^2$ \\
1 & $(3)$ & $q_3 + \frac{3}{2} q_2 q_1 + \frac{3}{8} q_1^3$ \\
1 & $(4)$ & $\frac{5}{2} q_4 + 6 q_3 q_1 + \frac{7}{3} q_2^2 + \frac{20}{3} q_2 q_1^2 + \frac{4}{3} q_1^4$ \\
1 & $(5)$ & $5 q_5 + \frac{50}{3} q_4 q_1 + \frac{25}{2} q_3 q_2 + \frac{625}{24} q_3 q_1^2 + \frac{125}{6} q_2^2 q_1 + \frac{625}{24} q_2 q_1^3 + \frac{625}{144} q_1^5$ \\ \midrule
1 & $(11)$ & $\frac{1}{6} q_2 + \frac{1}{24} q_1^2$ \\
1 & $(21)$ & $\frac{3}{2} q_3 + \frac{5}{3} q_2 q_1 + \frac{1}{3} q_1^3$ \\
1 & $(31)$ & $6q_4 + \frac{45}{4} q_3 q_1 + \frac{9}{2} q_2^2 + \frac{81}{8} q_2 q_1^2 + \frac{27}{16} q_1^4$ \\
1 & $(22)$ & $\frac{14}{3} q_4 + 9 q_3 q_1 + \frac{10}{3} q_2^2 + 8 q_2 q_1^2 + \frac{4}{3} q_1^4$ \\
1 & $(41)$ & $\frac{50}{3} q_5 + \frac{136}{3} q_4 q_1 + \frac{104}{3} q_3 q_2 + \frac{176}{3} q_3 q_1^2 + 48 q_2^2 q_1 + \frac{448}{9} q_2 q_1^3 + \frac{64}{9} q_1^5$ \\
1 & $(32)$ & $\frac{25}{2} q_5 + \frac{104}{3} q_4 q_1 + 24 q_3 q_2 + \frac{355}{8} q_3 q_1^2 + 34 q_2^2 q_1 + \frac{147}{4} q_2 q_1^3 + \frac{21}{4} q_1^5$ \\ \midrule
1 & $(111)$ & $\frac{9}{4} q_3 + 2 q_2 q_1 + \frac{1}{3} q_1^3$ \\
1 & $(211)$ & $\frac{40}{3} q_4 + \frac{81}{4} q_3 q_1 + 8 q_2^2 + \frac{91}{6} q_2 q_1^2 + \frac{13}{6} q_1^4$ \\
1 & $(311)$ & $\frac{625}{12} q_5 + \frac{352}{3} q_4 q_1 + \frac{355}{4} q_3 q_2 + \frac{511}{4} q_3 q_1^2 + 104 q_2^2 q_1 + 93 q_2 q_1^3 + \frac{93}{8} q_1^5$ \\
1 & $(221)$ & $\frac{125}{3} q_5 + 96 q_4 q_1 + 68 q_3 q_2 + 104 q_3 q_1^2 + \frac{244}{3} q_2^2 q_1 + \frac{224}{3} q_2 q_1^3 + \frac{28}{3} q_1^5$ \\ \midrule
2 & $(2)$ & $\frac{1}{48} q_2 + \frac{1}{240} q_1^2$ \\
2 & $(3)$ & $\frac{3}{4} q_3 + \frac{27}{40} q_2 q_1 + \frac{9}{80} q_1^3$ \\
2 & $(4)$ & $\frac{41}{6} q_4 + \frac{54}{5} q_3 q_1 + \frac{61}{15} q_2^2 + \frac{364}{45} q_2 q_1^2 + \frac{52}{45} q_1^4$ \\
2 & $(5)$ & $\frac{425}{12} q_5 + \frac{250}{3} q_4 q_1 + \frac{1375}{24} q_3 q_2 + \frac{4375}{48} q_3 q_1^2 + \frac{625}{9} q_2^2 q_1 + \frac{3125}{48} q_2 q_1^3 + \frac{3125}{384} q_1^5$ \\ \midrule
2 & $(11)$ & $\frac{1}{120} q_2 + \frac{1}{720} q_1^2$ \\
2 & $(21)$ & $\frac{27}{40} q_3 + \frac{91}{180} q_2 q_1 + \frac{13}{180} q_1^3$ \\
2 & $(31)$ & $\frac{54}{5} q_4 + \frac{567}{40} q_3 q_1 + \frac{27}{5} q_2^2 + \frac{729}{80} q_2 q_1^2 + \frac{729}{640} q_1^4$ \\
2 & $(22)$ & $\frac{122}{15} q_4 + \frac{54}{5} q_3 q_1 + \frac{182}{45} q_2^2 + \frac{104}{15} q_2 q_1^2 + \frac{13}{15} q_1^4$ \\
2 & $(41)$ & $\frac{250}{3} q_5 + \frac{7448}{45} q_4 q_1 + \frac{1736}{15} q_3 q_2 + \frac{7024}{45} q_3 q_1^2 + \frac{1808}{15} q_2^2 q_1 + \frac{1472}{15} q_2 q_1^3 + \frac{1472}{135} q_1^5$ \\
2 & $(32)$ & $\frac{1375}{24} q_5 + \frac{1736}{15} q_4 q_1 + \frac{1561}{20} q_3 q_2 + \frac{1747}{16} q_3 q_1^2 + \frac{412}{5} q_2^2 q_1 + \frac{2727}{40} q_2 q_1^3 + \frac{303}{40} q_1^5$ \\ \midrule
3 & (2) & $\frac{1}{1440} q_2 + \frac{1}{10080} q_1^2$ \\
3 & (3) & $\frac{9}{40} q_3 + \frac{81}{560} q_2 q_1 + \frac{81}{4480} q_1^3$ \\
3 & (4) & $\frac{73}{9} q_4 + \frac{324}{35} q_3 q_1 + \frac{1094}{315} q_2^2 + \frac{328}{63} q_2 q_1^2 + \frac{328}{567} q_1^4$ \\
3 & (5) & $\frac{8125}{72} q_5 + \frac{15200}{63} q_4 q_1 + \frac{134375}{1008} q_3 q_2 + \frac{1328125}{8064} q_3 q_1^2 + \frac{15625}{126} q_2^2 q_1 + \frac{6640625}{72576} q_2 q_1^3 + \frac{1328125}{145152} q_1^5$ \\ \bottomrule
\end{tabular}
\end{center}

\section{Table of pruned double Hurwitz numbers} \label{sec:pdata}

\begin{center}
\begin{tabular}{ccl} \toprule
$g$ & $(\mu_1, \ldots, \mu_n)$ & $PH_{g,n}(\mu_1, \ldots, \mu_n)$ evaluated at $s=1$ \\ \midrule
0 & $(11)$ & $q_2 + \frac{1}{2} q_1^2$ \\
0 & $(21)$ & $q_3 + q_2 q_1 + \frac{1}{6} q_1^3$ \\
0 & $(31)$ & $q_4 + q_3 q_1 + \frac{1}{2} q_2^2 + \frac{1}{2} q_2 q_1^2 + \frac{1}{24} q_1^4$ \\
0 & $(22)$ & $q_4 + q_3 q_1 + q_2^2 + q_2 q_1^2 + \frac{1}{6} q_1^4$ \\
0 & $(41)$ & $q_5 + q_4 q_1 + q_3 q_2 + \frac{1}{2} q_3 q_1^2 + \frac{1}{2} q_2^2 q_1 + \frac{1}{6} q_2 q_1^3 + \frac{1}{120} q_1^5$ \\
0 & $(32)$ & $q_5 + q_4 q_1 + 2 q_3 q_2 + q_3 q_1^2 + \frac{3}{2} q_2^2 q_1 + \frac{5}{6} q_2 q_1^3 + \frac{11}{120} q_1^5$ \\ \midrule
0 & $(111)$ & $3 q_3 + 4 q_2 q_1 + q_1^3$ \\
0 & $(211)$ & $4 q_4 + 6 q_3 q_1 + 4 q_2^2 + 6 q_2 q_1^2 + q_1^4$ \\
0 & $(311)$ & $5 q_5 + 8 q_4 q_1 + 12 q_3 q_2 + 9 q_3 q_1^2 + 12 q_2^2 q_1 + 8 q_2 q_1^3 + q_1^5$ \\
0 & $(221)$ & $5 q_5 + 8 q_4 q_1 + 12 q_3 q_2 + 9 q_3 q_1^2 + 12 q_2^2 q_1 + 8 q_2 q_1^3 + q_1^5$ \\ \midrule
0 & $(1111)$ & $16 q_4 + 27 q_3 q_1 + 12 q_2^2 + 24 q_2 q_1^2 + 4 q_1^4$ \\
0 & $(2111)$ & $25 q_5 + 48 q_4 q_1 + 54 q_3 q_2 + 54 q_3 q_1^2 + 60 q_2^2 q_1 + 46 q_2 q_1^3 + 6 q_1^5$ \\ \midrule
1 & $(2)$ & $\frac{1}{4} q_2 + \frac{1}{12} q_1^2$ \\
1 & $(3)$ & $q_3 + q_2 q_1 + \frac{5}{24} q_1^3$ \\
1 & $(4)$ & $\frac{5}{2} q_4 + 3 q_3 q_1 + \frac{11}{6} q_2^2 + \frac{5}{2} q_2 q_1^2 + \frac{3}{8} q_1^4$ \\
1 & $(5)$ & $5 q_5 + \frac{20}{3} q_4 q_1 + 9 q_3 q_2 + \frac{51}{8} q_3 q_1^2 + 8 q_2^2 q_1 + 5 q_2 q_1^3 + \frac{7}{12} q_1^5$ \\ \midrule
1 & $(11)$ & $\frac{1}{6} q_2 + \frac{1}{24} q_1^2$ \\
1 & $(21)$ & $\frac{3}{2} q_3 + \frac{3}{2} q_2 q_1 + \frac{7}{24} q_1^3$ \\
1 & $(31)$ & $6q_4 + \frac{33}{4} q_3 q_1 + \frac{13}{3} q_2^2 + \frac{41}{6} q_2 q_1^2 + \frac{25}{24} q_1^4$ \\
1 & $(22)$ & $\frac{14}{3} q_4 + 6 q_3 q_1 + \frac{10}{3} q_2^2 + \frac{29}{6} q_2 q_1^2 + \frac{17}{24} q_1^4$ \\
1 & $(41)$ & $\frac{50}{3} q_5 + \frac{82}{3} q_4 q_1 + \frac{63}{2} q_3 q_2 + \frac{223}{8} q_3 q_1^2 + \frac{94}{3} q_2^2 q_1 + \frac{265}{12} q_2 q_1^3 + \frac{65}{24} q_1^5$ \\
1 & $(32)$ & $\frac{25}{2} q_5 + \frac{58}{3} q_4 q_1 + \frac{45}{2} q_3 q_2 + \frac{151}{8} q_3 q_1^2 + \frac{64}{3} q_2^2 q_1 + \frac{173}{12} q_2 q_1^3 + \frac{41}{24} q_1^5$ \\ \midrule
1 & $(111)$ & $\frac{9}{4} q_3 + 2 q_2 q_1 + \frac{1}{3} q_1^3$ \\
1 & $(211)$ & $\frac{40}{3} q_4 + 18 q_3 q_1 + 8 q_2^2 + \frac{79}{6} q_2 q_1^2 + \frac{11}{6} q_1^4$ \\
1 & $(311)$ & $\frac{625}{12} q_5 + \frac{272}{3} q_4 q_1 + \frac{173}{2} q_3 q_2 + \frac{707}{8} q_3 q_1^2 + 86 q_2^2 q_1 + \frac{190}{3} q_2 q_1^3 + \frac{179}{24} q_1^5$ \\
1 & $(221)$ & $\frac{125}{3} q_5 + \frac{208}{3} q_4 q_1 + 68 q_3 q_2 + \frac{263}{4} q_3 q_1^2 + \frac{196}{3} q_2^2 q_1 + \frac{139}{3} q_2 q_1^3 + \frac{16}{3} q_1^5$ \\ \midrule
2 & $(2)$ & $\frac{1}{48} q_2 + \frac{1}{240} q_1^2$ \\
2 & $(3)$ & $\frac{3}{4} q_3 + \frac{19}{30} q_2 q_1 + \frac{5}{48} q_1^3$ \\
2 & $(4)$ & $\frac{41}{6} q_4 + \frac{171}{20} q_3 q_1 + \frac{161}{40} q_2^2 + \frac{439}{72} q_2 q_1^2 + \frac{119}{144} q_1^4$ \\
2 & $(5)$ & $\frac{425}{12} q_5 + 56 q_4 q_1 + 55 q_3 q_2 + \frac{821}{16} q_3 q_1^2 + \frac{3689}{72} q_2^2 q_1 + \frac{567}{16} q_2 q_1^3 + \frac{4627}{1152} q_1^5$ \\ \midrule
2 & $(11)$ & $\frac{1}{120} q_2 + \frac{1}{720} q_1^2$ \\
2 & $(21)$ & $\frac{27}{40} q_3 + \frac{179}{360} q_2 q_1 + \frac{17}{240} q_1^3$ \\
2 & $(31)$ & $\frac{54}{5} q_4 + \frac{513}{40} q_3 q_1 + \frac{647}{120} q_2^2 + \frac{389}{48} q_2 q_1^2 + \frac{637}{640} q_1^4$ \\
2 & $(22)$ & $\frac{122}{15} q_4 + \frac{189}{20} q_3 q_1 + \frac{182}{45} q_2^2 + \frac{427}{72} q_2 q_1^2 + \frac{521}{720} q_1^4$ \\
2 & $(41)$ & $\frac{250}{3} q_5 + \frac{1198}{9} q_4 q_1 + \frac{915}{8} q_3 q_2 + \frac{9193}{80} q_3 q_1^2 + \frac{37199}{360} q_2^2 q_1 + \frac{5733}{80} q_2 q_1^3 + \frac{14651}{1920} q_1^5$ \\
2 & $(32)$ & $\frac{1375}{24} q_5 + \frac{266}{3} q_4 q_1 + \frac{619}{8} q_3 q_2 + \frac{751}{10} q_3 q_1^2 + \frac{24629}{360} q_2^2 q_1 + \frac{33397}{720} q_2 q_1^3 + \frac{28123}{5760} q_1^5$ \\ \midrule
3 & (2) & $\frac{1}{1440} q_2 + \frac{1}{10080} q_1^2$ \\
3 & (3) & $\frac{9}{40} q_3 + \frac{361}{2520} q_2 q_1 + \frac{103}{5760} q_1^3$ \\
3 & (4) & $\frac{73}{9} q_4 + \frac{2403}{280} q_3 q_1 + \frac{17497}{5040} q_2^2 + \frac{3437}{720} q_2 q_1^2 + \frac{27187}{51840} q_1^4$ \\
3 & (5) & $\frac{8125}{72} q_5 + \frac{10456}{63} q_4 q_1 + \frac{37137}{280} q_3 q_2 + \frac{247069}{1920} q_3 q_1^2 + \frac{78973}{720} q_2^2 q_1 + \frac{1847309}{25920} q_2 q_1^3 + \frac{4427}{640} q_1^5$ \\
\bottomrule
\end{tabular}
\end{center}

\begin{small}
\bibliographystyle{habbrv}
\bibliography{double-hurwitz-topological-recursion.bib}

\textsc{School of Mathematical Sciences, Monash University, VIC 3800, Australia} \\
\emph{Email:} \href{mailto:norm.do@monash.edu}{norm.do@monash.edu}

\textsc{St. Petersburg Department of the Steklov Mathematical Institute, Fontanka 27, St. Petersburg 191023, Russia} \\
\emph{Email:} \href{mailto:max.karev@gmail.com}{max.karev@gmail.com}

\end{small}

\end{document}